\newtheorem{theorem}{Theorem}
\newtheorem{lemma}[theorem]{Lemma}
\newtheorem{corollary}[theorem]{Corollary}
\newtheorem{proposition}[theorem]{Proposition}
\newtheorem{conjecture}[theorem]{Conjecture}
\theoremstyle{definition}
\newtheorem{definition}[theorem]{Definition}
\newtheorem{remark}[theorem]{Remark}
\newtheorem{example}[theorem]{Example}
\newcommand{\R}{\mathbb{R}}
\newcommand{\N}{\mathbb{N}}
\newcommand{\A}{\mathcal{A}}
\newcommand{\B}{\mathcal{B}}
\newcommand{\M}{\mathcal{M}}
\DeclareMathOperator{\rank}{rank} 
\renewcommand{\vec}{\operatorname{vec}} 
\DeclareMathOperator{\tr}{trace} 
\newcommand{\rowvec}[1]{\rule[2pt]{1em}{0.4pt}~#1~\rule[2pt]{1em}{0.4pt}}
\title{Uniqueness of size-2 positive semidefinite matrix factorizations}
\author{Kristen Dawson, Serkan Ho\c{s}ten, Kaie Kubjas, Lilja Metsälampi}
\begin{document}
\maketitle

\begin{abstract}
We characterize when a size-$2$ positive semidefinite (psd) factorization of a positive matrix of rank $3$ and psd rank $2$ is unique. The characterization is obtained using tools from rigidity theory. In the first step, we define $s$-infinitesimally rigid psd factorizations and characterize  $1$- and $2$-infinitesimally rigid size-$2$ psd factorizations. In the second step, we connect $1$- and $2$-infinitesimal rigidity of size-$2$ psd factorizations to uniqueness via  global rigidity.  We also prove necessary conditions on a positive matrix 
of rank $3$ and psd rank $2$ to be on the topological boundary of all nonnegative matrices with the same rank conditions. 
\end{abstract}

\section{Introduction}
Let $M \in \R^{p \times q}_+$ be a $p \times q$ real matrix with nonnegative entries, and let $\mathcal{S}^k$ and $\mathcal{S}_+^k$ be set of $k \times k$ symmetric and symmetric positive semidefinite (psd) matrices, respectively.   For $k \in \N$, a size-$k$ psd factorization of $M$ is given by matrices $A_1,\ldots,A_p,B_1,\ldots,B_q \in \mathcal{S}^k_+$ satisfying $M_{ij} = \langle A_i, B_j\rangle:=\tr(A_i B_j)$. The psd rank of $M$ is the smallest $k \in \N$ such that $M$ has a size-$k$ psd factorization \cite{gouveia2013lifts}. The goal of this article is to study the uniqueness of size-2 psd factorizations of nonnegative matrices of rank three whose psd rank is two. 

The size-$k$ psd factorizations are never completely unique. Let $A_1,\ldots,A_p,B_1,\ldots,B_q$ be a size-$k$ psd factorization of a matrix $M$.  
Then, for any $S \in \operatorname{GL}(k)$, $S^TA_iS$, $i \in [p]$, and
$S^{-1}B_jS^{-T}$, $j\in[q]$ also give a psd factorization of $M$; see \cite[Section 6]{Fawzi:2015} and Lemma \ref{lemma:rotation-gives-psd-factorization}.  It is not clear whether a group larger than $\operatorname{GL}(k)$ acts on size-$k$ psd factorizations. However, at least for $k=2$, there are examples of matrices whose space of factorizations equals just one orbit of the $\operatorname{GL}(k)$-action \cite[Example 11]{Fawzi:2015}. 
Therefore we study uniqueness of psd factorizations up to the action of $\operatorname{GL}(k)$. 

A comprehensive survey of psd factorizations and psd rank is given in~\cite{Fawzi:2015}. Psd factorizations were originally introduced in~\cite{gouveia2013lifts} to study extension complexity of semidefinite programs: The psd rank of the slack matrix of a polytope is equal to the smallest $k$ where there exists a spectrahedron
in $\mathcal{S}_+^k$
with a linear projection on the polytope
\cite{Fawzi:2015, Fawzi:2022}. Optimizing a linear function over a complicated polytope translates into a semidefinite program over a potentially less complicated spectrahedron.
Uniqueness (up to the $\operatorname{GL}$-action) of psd factorizations of the slack matrix implies that there is essentially one spectrahedral lift of the corresponding polytope. 
 
Psd rank has applications also in quantum information theory~\cite{jain2013efficient,fiorini2012linear}. 
Consider the correlation generation game \cite{jain2013efficient} where two parties want to generate samples from a pair of correlated random variables $X$ and $Y$ following the joint distribution of $(X,Y)$.
The quantum protocol to achieve this goal can be constructed from a psd factorization of the matrix $M = P(X=i,Y=j)$ and a certain amount of shared information.
Each size-$k$ psd factorization of $M$ gives rise to a quantum protocol for the correlation generation game using $\log k$ qubits (and vice versa).
Hence, if the matrix $M$ has a unique size-$k$ psd factorization up to the $\operatorname{GL}$-action it follows there is a unique way to construct the quantum protocol for the correlation generation game using $\log k$ qubits.

The main result of the paper is the 
following.
\begin{theorem} \label{main_theorem}
Let $M\in \R_{+}^{p \times q}$ be a matrix of rank $3$ and psd rank $2$, and consider a size-$2$ psd factorization of $M$ given by the factors $A^{(1)}$, $\dots$, $A^{(p)}$, $B^{(1)}$, $\dots$, $B^{(q)} \in \mathcal{S}^2_+$.  Let $A^{(1)}$, $\dots$, $A^{(\bar{p})}$, $B^{(1)}$, $\dots$, $B^{(\bar{q})}$  be the factors that have rank one. For these,  let $A^{(i)} = a_i a_i^T$ and $B^{(j)} = b_j b_j^T$ for $a_i, b_j \in \R^2$. Assume that all $\det(a_i,a_j) \neq 0, \det(b_i,b_j) \neq 0$.
\begin{enumerate}
\item  If $M$ has positive entries, then $\left(A^{(1)}, \dots, A^{(p)}, B^{(1)}, \dots, B^{(q)}\right)$ is the unique size-2 psd factorization of $M$ up to $\text{GL}(2)$-action if and only if  there exist rank-one factors $A^{(i_1)},A^{(i_2)},A^{(i_3)}$, $B^{(j_1)},B^{(j_2)},B^{(j_3)}$ such that $\langle a_{i_k},b_{j_l} \rangle \neq 0$ and  
\begin{equation}\label{eqn:uniqueness_conditions}
\begin{split}    
\frac{\det(a_{i_1},a_{i_2})\det(a_{i_1},a_{i_3})\langle a_{i_1},b_{j_1} \rangle \langle a_{i_1},b_{j_2} \rangle}{\det(b_{j_1},b_{j_3})\det(b_{j_2},b_{j_3})\langle a_{i_2},b_{j_3} \rangle \langle a_{i_3},b_{j_3} \rangle} \geq 0, \\
-\frac{\det(a_{i_1},a_{i_2})\det(a_{i_2},a_{i_3})\langle a_{i_2},b_{j_1} \rangle \langle a_{i_2},b_{j_2} \rangle}{\det(b_{j_1},b_{j_3})\det(b_{j_2},b_{j_3})\langle a_{i_1},b_{j_3} \rangle \langle a_{i_3},b_{j_3} \rangle} \geq 0, \\
\frac{\det(a_{i_1},a_{i_3})\det(a_{i_2},a_{i_3})\langle a_{i_3},b_{j_1} \rangle \langle a_{i_3},b_{j_2} \rangle}{\det(b_{j_1},b_{j_3})\det(b_{j_2},b_{j_3})\langle a_{i_1},b_{j_3} \rangle \langle a_{i_2},b_{j_3} \rangle} \geq 0,\\
\frac{\det(b_{j_1},b_{j_2})\langle a_{i_1},b_{j_1} \rangle \langle a_{i_2},b_{j_1} \rangle \langle a_{i_3},b_{j_1} \rangle}{\det(b_{j_2},b_{j_3}) \langle a_{i_1},b_{j_3} \rangle \langle a_{i_2},b_{j_3} \rangle \langle a_{i_3},b_{j_3} \rangle} \geq 0, \\
-\frac{\det(b_{j_1},b_{j_2})\langle a_{i_1},b_{j_2} \rangle \langle a_{i_2},b_{j_2} \rangle \langle a_{i_3},b_{j_2} \rangle}{\det(b_{j_1},b_{j_3}) \langle a_{i_1},b_{j_3} \rangle \langle a_{i_2},b_{j_3} \rangle \langle a_{i_3},b_{j_3} \rangle} \geq 0.
\end{split}
\end{equation}
\item If $M$ has one zero entry and $(A^{(1)}$, $\dots$, $A^{(p)}$, $B^{(1)}$, $\dots$, $B^{(q)})$ is the unique size-2 psd factorization of $M$ up to $\text{GL}(2)$-action, then there exist rank-one factors $A^{(i_1)},A^{(i_2)},A^{(i_3)}$, $B^{(j_1)},B^{(j_2)},B^{(j_3)}$ such that $\langle a_{i_k},b_{j_l} \rangle \neq 0$ except  for $\langle a_{i_1},b_{j_1} \rangle$ and~\eqref{eqn:uniqueness_conditions} holds.
\end{enumerate}
\end{theorem}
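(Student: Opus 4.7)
The plan is to execute the two-step program sketched in the abstract. Earlier sections of the paper characterize $1$- and $2$-infinitesimally rigid size-$2$ psd factorizations and establish that infinitesimal rigidity, paired with a global rigidity argument, upgrades to uniqueness up to the $\operatorname{GL}(2)$-action. Theorem~\ref{main_theorem} then amounts to (i) identifying the $s$-infinitesimal rigidity conditions that apply in the specific regime of rank-$3$, psd-rank-$2$ matrices, and (ii) rewriting those conditions in the explicit algebraic form recorded in \eqref{eqn:uniqueness_conditions}.

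First I would reduce to an analysis of the rank-one factors. Since $M$ has rank $3$ and the Gram pairing lives in the $3$-dimensional space $\mathcal{S}^2$, the rank-one factors $a_i a_i^T$ and $b_j b_j^T$ span $\mathcal{S}^2$ on each side, and the remaining rank-two factors are then forced linearly by the constraints $\langle A^{(i)}, B^{(j)}\rangle = M_{ij}$ once enough rank-one factors are pinned down. The non-vanishing of the pairwise determinants $\det(a_i,a_j)$ and $\det(b_i,b_j)$ provides three linearly independent directions on each side, which is exactly what is needed to pick out an affine frame in $\mathcal{S}^2$. After using the $\operatorname{GL}(2)$-gauge to normalize a suitable subset of the chosen six vectors, any competing factorization arises by solving a finite algebraic system in the remaining coordinates; each of the five inequalities in \eqref{eqn:uniqueness_conditions} then encodes a sign condition that forces the recovered factors to remain in $\mathcal{S}^2_+$, and collectively they eliminate the existence of a second $\operatorname{GL}(2)$-orbit.

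For part (1), positivity of all entries of $M$ places the factorization in the interior of the feasible region, so both the infinitesimal rigidity criterion and the global rigidity bridge to uniqueness apply cleanly, yielding the claimed equivalence. For part (2), the zero entry $M_{i_1 j_1} = 0$ forces $\langle a_{i_1}, b_{j_1}\rangle = 0$, placing the factorization on the boundary of the set of valid rank-$3$, psd-rank-$2$ parameters; infinitesimal rigidity remains necessary for uniqueness there, which yields the one-way implication, but sufficiency can fail, since boundary deformations may produce alternative factorizations that are invisible to the infinitesimal picture (this is the source of the weaker statement).

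The main obstacle will be the explicit translation in step (ii): inverting the Gram equations to express the non-distinguished factors in terms of the six chosen rank-one factors, and then tracking signs through products of determinants and inner products to arrive at precisely the inequalities stated. The symmetry of \eqref{eqn:uniqueness_conditions} under swapping the roles of the $a$'s and $b$'s suggests that a dualization simplifies part of the computation, but verifying that these inequalities capture $1$- and $2$-infinitesimal rigidity simultaneously, rather than one without the other, is the delicate bookkeeping step on which the whole argument hinges.
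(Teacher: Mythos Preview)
Your high-level summary of the two-step program is accurate: the paper does characterize $s$-infinitesimal rigidity and then shows (for positive $M$) that this is equivalent to global rigidity, i.e., uniqueness up to $\operatorname{GL}(2)$. Your reduction to rank-one factors and your account of why part (2) is only a one-way implication are also correct.

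Where your proposal diverges from the paper is in the mechanism that produces the five inequalities. You describe normalizing via the $\operatorname{GL}(2)$-gauge, then ``solving a finite algebraic system'' for competing factorizations and reading off sign conditions that keep the recovered factors inside $\mathcal{S}^2_+$. That is a direct global-uniqueness argument. The paper does something structurally different: it first encodes the $1$-infinitesimal motions as a polyhedral cone $P_{(\mathcal{A},\mathcal{B})}=\{\underline D:C_{(\mathcal{A},\mathcal{B})}\underline D\ge 0\}$, where the rows of $C_{(\mathcal{A},\mathcal{B})}$ are the linear forms $\alpha_i^A,\alpha_j^B$ coming from the rank-one factors. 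Farkas' lemma then says that $P_{(\mathcal{A},\mathcal{B})}$ collapses to the kernel $L$ (the $1$-trivial motions) exactly when $C_{(\mathcal{A},\mathcal{B})}$ has a strictly positive left kernel vector. The left kernel is one-dimensional once six suitable rank-one factors are chosen, and its generator has the explicit coordinates \eqref{eqn:left-kernel-C}: products of the $\det(a_i,a_j)$, $\det(b_i,b_j)$, and $\langle a_i,b_j\rangle$. The five inequalities in \eqref{eqn:uniqueness_conditions} are nothing more than the positivity of five of these six coordinates after dividing through by the sixth. So there is no ``inversion of Gram equations'' or sign-tracking through a solved system; the inequalities drop out of a single left-kernel computation together with Farkas.

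This matters for more than bookkeeping. Your direct approach would, in principle, have to rule out \emph{all} alternative factorizations, not just nearby ones, and it is not clear how to do that without re-proving the connectedness result (\cite[Proposition~9]{Fawzi:2015}) that the paper invokes to pass from local to global rigidity. The paper's route avoids this: Farkas handles the infinitesimal side cleanly, and the local-to-global step is isolated in \Cref{lem:local-rigidity-implies-global-rigidity} and \Cref{prop:global-rigidity-implies-infinitesimal-rigidity-for-psd-rank-2-positive-matrices}. If you want to flesh out your proposal, the missing ingredient is precisely this Farkas/left-kernel picture for the cone of infinitesimal motions.
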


To derive our main result, we use tools from rigidity theory \cite{Roth:81} where the main goal is to determine whether there is a unique realization of a set of points up to rigid transformations given  some pairwise distances between the points. The idea to apply rigidity theory to study uniqueness of matrix factorizations is not new and has been previously put into use for low-rank matrix completion~\cite{singer2010uniqueness}, nonnegative matrix factorizations~\cite{krone2021uniqueness} and general classes of matrix and tensor decompositions given by algebraic constraints~\cite{cruickshank2023identifiability}. Our setting is not covered by~\cite{cruickshank2023identifiability} because their factorizations are defined by equality constraints only, while psd factorizations require also inequality constraints to guarantee positive semidefiniteness of the factors.

Uniqueness of matrix factorizations corresponds to the notion of global rigidity in rigidity theory. To approach this question, we define weaker versions such as local rigidity and $s$-infinitesimal rigidity for $1 \leq s \leq k$. A psd factorization of a fixed matrix $M \in \R^{p \times q}_+$ is locally rigid if it is unique in some neighborhood up to the $\operatorname{GL}$-action, although the matrix can have other factorizations. Global and local rigidity are difficult to study both in rigidity theory and for matrix factorizations. $1$-Infinitesimal rigidity is a notion that implies local rigidity in our setting; see  \cref{lem:inf-rigidity-implies-local-rigidity}. Furthermore, $s$-infinitesimal rigidity is usually easier to study. 

Proving our main results consists of two main steps. In the first step we characterize when a size-$2$ psd factorization of a positive matrix of rank $3$ and psd rank $2$ is $1$- and $2$-infinitesimally rigid (\Cref{thm:no-orthogonal-pairs}). We also characterize $2$-infinitesimal rigidity for matrices with one or more zeros (\Cref{thm:one-orthogonal-pair} and \Cref{thm:two-orth-inf-rigid}). In the second step we show that for a positive matrix of rank $3$  and psd rank $2$,  $1$- and $2$-infinitesimal, local, and global rigidity coincide; see \Cref{thm:equivalence-of-rigidities}.

The outline of the paper is the following. In~\Cref{sec:infinitesimal_and_trivial_motions}, we introduce $s$-infinitesimal and $s$-trivial motions and define $s$-infinitesimal rigidity in the context of positive semidefinite factorizations. $1$-trivial motions for size-$2$ factorizations and $k$-trivial motions for general size-$k$ factorizations are characterized in~\Cref{thm:1-trivial} and \Cref{thm:trivial_general_case}, respectively. In~\Cref{sec: prelim results}, we prove preliminary results about positive semidefinite factorizations, $s$-infinitesimal motions and $s$-infinitesimal rigidity used in the rest of the paper. \Cref{sec:infinitesimal_rigidity} is where we focus on the characterization of $1$- and $2$-infinitesimally rigid size-$2$ psd factorizations. This section consists of three subsections~\Cref{section:no_orthogonal_pairs},~\Cref{section:one_orthogonal_pair} and~\Cref{section:two_orthogonal_pairs} in which we consider the cases when $M$ has no zeros, one zero and two or more zeros. These correspond to the cases when there are no orthogonal pairs among the pairs of factors $(A_i,B_j)$, there is one orthogonal pair or there are two or more orthogonal pairs. Finally, in~\Cref{section:rigidity_and_boundaries}, we introduce the notions of local and global rigidity. The main result in this section in which we characterize uniqueness of psd factorizations is 
\Cref{thm:equivalence-of-rigidities} where
we prove that for positive matrices of rank $3$ and psd rank $2$, local, global, and $1$- and $2$-infinitesimal rigidity coincide. This culminates in proving~\Cref{main_theorem}. Finally, in~\Cref{section:boundaries} we give necessary conditions for a positive 
matrix to be on the boundary of all matrices 
of rank $3$ and psd rank $2$. The code for the computations in this paper is available at GitHub:
\begin{center}
\url{https://github.com/kaiekubjas/uniqueness-of-size-2-psd-factorizations/}
\end{center}

\section{$s$-Infinitesimal and $s$-trivial motions of psd factorizations}\label{sec:infinitesimal_and_trivial_motions}

In~\Cref{sec:infinitestimal_motions}, we introduce $s$-infinitesimal motions. Then we define $s$-trivial motions and $s$-infinitesimal rigidity in~\Cref{section:trivial-motions-infinitesimal-rigidity}. \Cref{section:1-trivial-motions} and
\Cref{sec:k-trivial motions} characterize 
$1$-trivial motions of size-$2$ psd factorizations of matrices of rank $3$ and psd rank $2$, and $k$-trivial motions of size-$k$ psd factorizations of matrices of rank $\binom{k+1}{2}$ and psd rank $k$, respectively.

\subsection{$s$-Infinitesimal motions} \label{sec:infinitestimal_motions}

For a matrix $X \in \mathbb{R}^{n \times n}$ and a set $I \subseteq [n]$, we denote by $X_I$ the submatrix of $X$ with row and column indices in the set $I$. We denote the determinant of $X_I$ by $[X]_I$. For a polynomial $f \in \mathbb{R}[t]$, we denote its Taylor approximation  of order $s$ by $T^s(f)$. 

\begin{definition}[$s$-Infinitesimal motion of a psd factorization]\label{def:psd infinitesimal} 
Let $M \in \R_+^{p \times q}$ be a nonnegative matrix of positive semidefinite rank $k$. 
Let a size-$k$ positive semidefinite factorization of $M$ be given by  $A^{(1)}, \dots, A^{(p)}, B^{(1)}, \dots, B^{(q)} \in \mathcal{S}_+^k$.
A collection of positive semidefinite matrices 
$$A^{(1)}(t), \dots, A^{(p)}(t), B^{(1)}(t), \dots, B^{(q)}(t) \in \mathcal{S}_+^k$$
for $t\in [0,1]$ is called an \textit{$s$-infinitesimal motion} of the factorization if 
\begin{enumerate}
    \item $A^{(i)} (0) = A^{(i)}$ and $B^{(j)}(0) = B^{(j)}$ for
    all $i \in [p]$ and $j \in [q]$,
    \item $\langle \dot{A}^{(i)},B^{(j)}  \rangle + \langle A^{(i)},\dot{B}^{(j)}  \rangle = 0$ for all $(i,j) \in [p] \times [q]$, and \label{cond: psd-equality}

    \item there exists $\varepsilon >0$ such that for $t \in [0,\varepsilon)$, $T^s([A^{(i)} + t \dot{A}^{(i)}]_I) \geq 0$ for all $i \in [p]$ and $I \subseteq [k]$ and $T^s([B^{(j)} + t \dot{B}^{(j)}]_J) \geq 0$ for all $j \in [q]$ and $J \subseteq [k]$,
\end{enumerate}
where $\dot A^{(i)} := \frac{dA^{(i)}(t)}{dt}(0)$ and $\dot B^{(j)} := \frac{dB^{(j)}(t)}{dt}(0)$. 
\end{definition}

\begin{remark}
For a $k$-infinitesimal motion, 
the third condition in~\Cref{def:psd infinitesimal} is 
equivalent to 
 $A^{(i)} + t \dot{A}^{(i)} \in \mathcal{S}^k_+$ for all $i \in [p]$ and $B^{(j)} + t \dot{B}^{(j)} \in \mathcal{S}^k_+$ for all $j \in [q]$. 
\end{remark}

\begin{remark}\label{rmk: only zeros matter}
If $[A^{(i)}]_I > 0$, then $T^s([A^{(i)} + t \dot{A}^{(i)}]_I) \geq 0$ is automatically satisfied for small enough $t>0$, and similarly for the matrices $B^{(j)}$. Therefore, the inequalities of the third condition in~\Cref{def:psd infinitesimal} are only relevant for the factors and submatrices such that $[A^{(i)}]_I = 0$ or $[B^{(j)}]_J = 0$.
\end{remark}

\begin{lemma}\label{lemma:t-inf-motion-is-s-inf-motion}
     Let $M \in \M_+^{p \times q}$ 
     and consider a size-$k$ psd factorization of $M$ given by the factors $A^{(1)}$, $\dots$, $A^{(p)}$, $B^{(1)}$, $\dots$, $B^{(q)}$. For $s<r$, any $r$-infinitesimal motion is an $s$-infinitesimal motion.
\end{lemma}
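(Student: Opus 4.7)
The plan is to observe that conditions~(1) and~(2) of \Cref{def:psd infinitesimal} make no reference to $s$, so they transfer immediately from the $r$-infinitesimal hypothesis to the $s$-infinitesimal conclusion. The content therefore lies entirely in condition~(3): given that $T^r([A^{(i)} + t \dot A^{(i)}]_I) \geq 0$ holds on some right-neighborhood $[0,\varepsilon)$ of $0$, and similarly for the factors $B^{(j)}$, I must produce a (possibly smaller) uniform right-neighborhood on which $T^s$ of each such polynomial is nonnegative.

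The key observation is that, for any polynomial $f \in \R[t]$, the truncation $T^s(f)$ coincides with $T^r(f)$ in all coefficients of degree $\leq s$, and is obtained from $T^r(f)$ by discarding the terms of degrees $s+1, \dots, r$. Fixing a factor $A^{(i)}$ and an index set $I \subseteq [k]$, I will write $f(t) := [A^{(i)} + t\dot A^{(i)}]_I = \sum_{\ell \geq 0} a_\ell t^\ell$ and let $d$ denote the smallest index with $a_d \neq 0$ (with $d = \infty$ if $f \equiv 0$). I then split into cases on $d$: if $d > r$, both $T^r(f)$ and $T^s(f)$ vanish identically; if $d \leq r$, the behavior of $T^r(f)(t)$ as $t \to 0^+$ is governed by the dominant monomial $a_d t^d$, and the hypothesis $T^r(f) \geq 0$ on $[0,\varepsilon)$ forces $a_d > 0$. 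In this latter case, either $d \leq s$, so $T^s(f)$ has the same dominant term $a_d t^d > 0$ and is positive on a right-neighborhood of $0$; or $d > s$, so $T^s(f) \equiv 0$.

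To complete the verification, because there are only finitely many pairs $(i, I)$ and $(j, J)$ to consider, I take the minimum of the $\varepsilon$'s obtained above to obtain a single uniform interval on which condition~(3) holds for every factor and every principal submatrix. I do not anticipate a real obstacle: the argument reduces to the elementary observation that a polynomial nonnegative on a right-neighborhood of $0$ must have positive lowest-order nonzero coefficient, together with the fact that truncating a polynomial to lower degree discards only higher-order terms that are dominated near $0$.
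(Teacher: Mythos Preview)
Your proof is correct and rests on the same elementary observation as the paper's: the sign of a Taylor polynomial on a right-neighborhood of $0$ is governed by its lowest-order nonzero coefficient, so truncating to a lower order cannot create negativity. The paper organizes this as a contrapositive step from $r$ down to $r-1$ followed by induction, whereas you argue directly from $r$ to $s$ by casework on the index $d$ of the lowest nonzero coefficient; your version is slightly more explicit (you also note that conditions~(1) and~(2) are independent of $s$ and take a uniform $\varepsilon$ over the finitely many factors and index sets), but the underlying idea is the same.
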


\begin{proof}
We have $T^r([A^{(i)} + t \dot{A}^{(i)}]_I) = T^{r-1}([A^{(i)} + t \dot{A}^{(i)}]_I) + t^r \gamma_I$, where $\gamma_I \in \mathbb{R}$. If for every $\varepsilon>0$ there exists $0 <t< \varepsilon$ such that $T^{r-1}([A^{(i)} + t \dot{A}^{(i)}]_I)<0$, then for every $\varepsilon>0$ there exists $0 <t< \varepsilon$ such that $T^r([A^{(i)} + t \dot{A}^{(i)}]_I) <0$ for all $t \in [0,\varepsilon)$. Hence if a motion is not a $(r-1)$-infinitesimal motion, then it is not an $r$-infinitesimal motion. This proves that an $r$-infinitesimal motion is a $(r-1)$-infinitesimal motion. 
Now induction gives the result. 
\end{proof}

\begin{remark} \label{rmk:inner product not preserved}
    In classical rigidity theory of bar-and-joint frameworks, infinitesimal motions are required to  instantaneously preserve distances; see \cite[Definition 4.1]{Roth:81}  for reference.
    Analogously to this, an $s$-infinitesimal motion of a factorization is only required to instantaneously preserve the inner products, which is expressed by the second condition of \Cref{def:psd infinitesimal}.
    In particular, an $s$-infinitesimal motion of a psd factorization of $M$ does not require that 
    \begin{align}
    \label{eqn: inner pr preserving factors}
        M_{ij} = \langle A^{(i)}(t), B^{(j)}(t) \rangle \text{ for all } t \in [0,1].
    \end{align} 
\end{remark}

 Throughout the paper, we will consider nonnegative matrices $M$ with rank $\binom{k+1}{2}$ and psd rank $k$. The set of such $p \times q$ matrices will be denoted by $\M_{\binom{k+1}{2},k}^{p \times q}$.
We use the following vectorization of a symmetric matrix as in \cite[Proposition 2]{Fawzi:2015}.  For $X\in \mathcal{S}^k$ we let  $\text{vec}(X) \in \R^{\binom{k+1}{2}}$ where 
\[\underline{x} \coloneqq \text{vec}(X) = (X_{11},\ldots,X_{kk},\sqrt{2}X_{12},\ldots,\sqrt{2}X_{1k},\sqrt{2}X_{23},\ldots,\sqrt{2}X_{(k-1)k}).\]

Furthermore we define $\A$ as a $p\times \binom{k+1}{2}$ matrix with rows $\underline{a}^{(i)T}$ and $\B$ as a $\binom{k+1}{2}\times q$ matrix with the columns $\underline{b}^{(j)}$, that is,  
\begin{equation}\label{eqn: calA and calB}
    \A = \begin{pmatrix}
\rowvec{\underline{a}^{(1)T}}\\
 \vdots  \\
\rowvec{\underline{a}^{(p)T}} 
\end{pmatrix} \  \text{ and } \  \B = \begin{pmatrix}
\vert & & \vert \\
\underline{b}^{(1)} & \dots & \underline{b}^{(q)} \\
\vert & & \vert
\end{pmatrix}.
\end{equation}

 Then the pair $\A,\B$ gives a size-$k$ factorization of $M$, since $\mathrm{trace}(XY) = \underline{x}^T\underline{y}$ for matrices $X,Y \in \mathcal{S}^k$. 
Indeed $M = \mathcal{A}\mathcal{B}$ and $\rank(M)=\rank(\A)=\rank(\B)$. 
With this notation, condition (2) in \Cref{def:psd infinitesimal} can be written in matrix form as $\dot\A \B + \A \dot\B = 0$, where the rows of $\dot \A$ and columns of $\dot \B$ are $\dot{\underline{a}}^{(i)T}$ and $\dot{\underline{b}}^{(j)},$ respectively. 
As in the case of nonnegative factorizations \cite[Section 3]{krone2021uniqueness}, for this equation to hold, the column span of $\dot\A$ must be contained in the column span of $\A$, and the row span of $\dot\B$ must be contained in the row span of $\B$.  This means that $\dot\A= \A D_1$ and $\dot \B = -D_2\B$ where $D_1$ and $D_2$ are $\binom{k+1}{2} \times \binom{k+1}{2}$ matrices. Now $\A D_1 \B - \A D_2 \B = 0$, and because both $\A$ and $\B$ have full rank, we conclude that $D_1=D_2=D$. 
We will often need the entries of $D$ be written in vector form, so we define
$$ \underline{D} = 
(D_{11},\ldots,D_{1k},D_{21},\ldots,D_{2k},\ldots,D_{\binom{k+1}{2}1}, \ldots, D_{\binom{k+1}{2}\binom{k+1}{2}}).$$

\subsection{$s$-Trivial motions and $s$-infinitesimal rigidity} \label{section:trivial-motions-infinitesimal-rigidity}

The discussion above implies that for each $s$-infinitesimal motion of a size-$k$ psd factorizaton of a matrix $M$, there is a corresponding matrix $D$ such that $\dot \A = \A D$ and $\dot \B = -D \B$. In the next example we show that $D=dI$, where $d \neq 0$ and $I$ is the $\binom{k+1}{2} \times \binom{k+1}{2}$ identity matrix, can be used to induce an $s$-infinitesimal motion for {\it any} psd factorization for all $s \in [k]$.

\begin{example}\label{ex: trivial motions}
Let $M \in \R_+^{p \times q}$ and consider a size-$k$ psd factorization of $M$ given by the positive semidefinite matrices $A^{(1)}, \dots, A^{(p)}, B^{(1)}, \dots, B^{(q)}$.
Define a $k$-infinitesimal motion of the factorization by 
\begin{equation}
\label{eqn: trivial motion}
A^{(i)} (t) = (1+td)A^{(i)}\mbox{  and  } 
        B^{(j)} (t) = (1+ td)^{-1}B^{(j)}.
\end{equation}
These are symmetric psd matrices for $t \in [0, |\frac{1}{d}|)$.
Also $A^{(i)}(0) = A^{(i)}$ and $B^{(j)}(0) = B^{(j)}$, and 
furthermore,
$\dot A^{(i)} = dA^{(i)}$ and  
    $\dot B^{(j)} = -d B^{(j)}$,
which immediately implies that $\langle A^{(i)}, \dot B^{(j)} \rangle + \langle \dot A^{(i)}, B^{(j)} \rangle = 0$ for all pairs $(i,j) \in [p] \times [q]$.
Clearly, $A^{(i)} + t \dot A^{(i)}$ and $B^{(j)} + t \dot B^{(j)}$ are also psd matrices for 
all $t \in [0, |\frac{1}{d}|)$.
Hence \eqref{eqn: trivial motion} defines a $k$-infinitesimal motion of the factorization given by the matrices $A^{(1)}, \dots, A^{(p)}, B^{(1)}, \dots, B^{(q)}$ corresponding to the  matrix $D=dI$. By~\Cref{lemma:t-inf-motion-is-s-inf-motion}, the $k$-infinitesimal motion~\eqref{eqn: trivial motion} is $s$-infinitesimal motion for every $s \in [k]$.

Furthermore, this $s$-infinitesimal motion is such that  we get a psd factorization of the matrix $M$ with the factors $A^{(1)}(t), \dots, A^{(p)}(t), B^{(1)}(t), \dots, B^{(q)}(t)$ for all $t \in [0,1]$, since 
\begin{equation*}
    \langle A^{(i)}(t), B^{(j)}(t) \rangle = \mathrm{trace}(A^{(i)}(1 + td)(1 + td)^{-1}B^{(j)}) = \langle A^{(i)},B^{(j)} \rangle = M_{ij}.
\end{equation*}
This is in contrast with Remark \ref{rmk:inner product not preserved} which discusses that $s$-infinitesimal motions usually do not give psd factorizations of the matrix $M$ for all $t \in [0,1]$.
\end{example}

\begin{definition}[$s$-Trivial motion] \label{def: trivial motion} 
An $s$-infinitesimal motion of a size-$k$ psd factorization of $M \in \M_{\binom{k+1}{2},k}^{p \times q}$  is called an $s$-\textit{trivial motion}, if the corresponding matrix $D$ gives rise to an $s$-infinitesimal motion of any size-$k$ psd factorization of all matrices in $\M_{\binom{k+1}{2},k}^{p \times q}$.
\end{definition}

\begin{definition}[$s$-Infinitesimally rigid factorization]\label{infinitesimally rigid factorization}     
A size-$k$ psd factorization of $M \in \M_{\binom{k+1}{2},k}^{p \times q}$ is $s$-\textit{infinitesimally rigid} if all its $s$-infinitesimal motions are $s$-trivial motions. A factorization that is not $s$-infinitesimally rigid is called {\it $s$-infinitesimally flexible}.
\end{definition}

\begin{lemma}\label{lem:sub-facorization-rigid}
    Let $M \in \M_{\binom{k+1}{2},k}^{p \times q}$ 
     and consider a size-$k$ psd factorization of $M$ given by the factors $A^{(1)}$, $\dots$, $A^{(p)}$, $B^{(1)}$, $\dots$, $B^{(q)}$.
    If there exists a subset of the factors which corresponds to an $s$-infinitesimally rigid factorization of a submatrix $M'$ of $M$, then the whole factorization is $s$-infinitesimally rigid.

    \begin{proof}
        Let $A^{(i_1)}$, $\dots$, $A^{(i_{p'})}$, $B^{(j_1)}$, $\dots$, $B^{(j_{q'})}$ be a subset of the factors of the psd factorization corresponding to an $s$-infinitesimally rigid psd factorization of a $p' \times q'$ submatrix $M'$ of $M$.
        Then, the only $s$-infinitesimal motions of the factorization given by the factors $A^{(i_1)}$, $\dots$, $A^{(i_{p'})}$, $B^{(j_1)}$, $\dots$, $B^{(j_{q'})}$ are the $s$-trivial motions.
        Since the $s$-infinitesimal motions of the whole factorization need to satisfy the constraints imposed by the subset $A^{(i_1)}$, $\dots$, $A^{(i_{p'})}$, $B^{(j_1)}$, $\dots$, $B^{(j_{q'})}$ of the factors, it follows that the whole factorization is $s$-infinitesimally rigid. 
    \end{proof}
\end{lemma}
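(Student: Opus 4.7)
The plan is to reduce any $s$-infinitesimal motion of the whole factorization to one of the sub-factorization via the associated matrix $D$, and to observe that the two motions must share the same $D$.

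First, I would invoke the discussion preceding \Cref{def: trivial motion}: every $s$-infinitesimal motion of the whole factorization determines a unique $\binom{k+1}{2} \times \binom{k+1}{2}$ matrix $D$ satisfying $\dot{\A} = \A D$ and $\dot{\B} = -D \B$, with uniqueness following from $\A$ and $\B$ having full rank $\binom{k+1}{2}$ (guaranteed by $M \in \M_{\binom{k+1}{2},k}^{p \times q}$).

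Second, I would restrict the motion to the sub-indices corresponding to the factors $A^{(i_1)}, \dots, A^{(i_{p'})}, B^{(j_1)}, \dots, B^{(j_{q'})}$. Since the three conditions of \Cref{def:psd infinitesimal} are indexed pointwise over $i$ and $j$, the restriction automatically satisfies them for the sub-factorization, so it is an $s$-infinitesimal motion of the sub-factorization of $M'$. Extracting the relevant rows of $\dot{\A} = \A D$ yields $\dot{\A}' = \A' D$, and the full column rank of $\A'$ (from $\rank(M') = \binom{k+1}{2}$) forces the matrix associated to the restricted motion to coincide with $D$ itself.

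Third, by the $s$-infinitesimal rigidity of the sub-factorization, the restricted motion is $s$-trivial; so by \Cref{def: trivial motion}, $D$ induces an $s$-infinitesimal motion of every size-$k$ psd factorization. In particular, $D$ induces the original motion of the whole factorization, which is therefore $s$-trivial. Since every $s$-infinitesimal motion of the whole factorization is $s$-trivial, the whole factorization is $s$-infinitesimally rigid.

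The main subtlety is the implicit step in the third part that $s$-triviality of $D$ does not depend on the ambient dimensions: a trivial $D$ for $p' \times q'$ factorizations must also be trivial for $p \times q$ factorizations. This is unproblematic because Condition~(2) of \Cref{def:psd infinitesimal} is automatic for any $D$-induced motion and Condition~(3) is a per-factor positivity requirement evaluated one PSD factor at a time, so the set of trivial $D$'s is intrinsic to $k$ and $s$ and independent of the number of factors. Everything else is routine bookkeeping.
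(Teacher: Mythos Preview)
Your proof is correct and follows essentially the same approach as the paper: restrict an $s$-infinitesimal motion of the whole factorization to the sub-factorization, use rigidity there to conclude the associated matrix $D$ is $s$-trivial, and deduce rigidity of the whole. Your version is more explicit than the paper's---in particular, you spell out why the same $D$ governs both the full and restricted motions (via full rank of $\A'$), and you address the point that $s$-triviality of $D$ is intrinsic to $k$ and $s$ rather than to the ambient dimensions $p,q$, a subtlety the paper leaves implicit.
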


\subsection{1-Trivial motions} \label{section:1-trivial-motions}

In this subsection we will give a characterization of 1-trivial motions.  To do this, we recall the action of $\operatorname{GL}(k)$ on psd factorizations. 

\begin{lemma}\label{lemma:rotation-gives-psd-factorization}
Let $M \in \R_+^{p \times q}$ and let $A^{(i)} \in \mathcal{S}_+^k, \, i\in [p]$ and $B^{(j)} \in \mathcal{S}_+^k, \, j \in [q]$ be a size-$k$ psd factorization of $M$. 
For any $S \in \operatorname{GL}(k)$, $\Tilde{A}^{(i)} = S^TA^{(i)}S$ and
$\Tilde{B}^{(j)} = S^{-1}B^{(j)}S^{-T}$ is also a psd factorization of $M$.
\end{lemma}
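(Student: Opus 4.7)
The proof plan is straightforward: verify that the transformed matrices lie in $\mathcal{S}_+^k$ and then check that their inner products reproduce the entries of $M$. Both are routine linear-algebra checks.

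First I would establish that $\tilde A^{(i)}$ and $\tilde B^{(j)}$ are symmetric positive semidefinite. Symmetry follows immediately from $(S^TA^{(i)}S)^T = S^T(A^{(i)})^TS = S^TA^{(i)}S$, and similarly for $\tilde B^{(j)}$. For positive semidefiniteness, since $A^{(i)} \in \mathcal{S}_+^k$ there exists $C \in \R^{k \times k}$ with $A^{(i)} = C^TC$, hence $\tilde A^{(i)} = S^TC^TCS = (CS)^T(CS) \in \mathcal{S}_+^k$. For $\tilde B^{(j)}$, writing $B^{(j)} = D^TD$ gives $\tilde B^{(j)} = S^{-1}D^TDS^{-T} = (DS^{-T})^T(DS^{-T}) \in \mathcal{S}_+^k$.

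Second, I would verify that the trace equalities are preserved, which is a direct computation using the cyclic property of the trace:
\begin{equation*}
\langle \tilde A^{(i)}, \tilde B^{(j)} \rangle = \tr\!\left(S^TA^{(i)}S \cdot S^{-1}B^{(j)}S^{-T}\right) = \tr\!\left(S^TA^{(i)}B^{(j)}S^{-T}\right) = \tr\!\left(A^{(i)}B^{(j)}\right) = M_{ij}.
\end{equation*}
Combining the two steps shows that $\tilde A^{(1)},\ldots,\tilde A^{(p)},\tilde B^{(1)},\ldots,\tilde B^{(q)}$ is a size-$k$ psd factorization of $M$.

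There is no real obstacle here; the only thing worth being careful about is the order of the transposes in the definition of $\tilde B^{(j)}$, which is precisely what makes the cancellation $S \cdot S^{-1}$ appear in the middle of the trace so that the outer $S^T$ and $S^{-T}$ cancel by cyclicity. This asymmetry between the action on the $A$-factors and on the $B$-factors is the substantive content of the lemma.
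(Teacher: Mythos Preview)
Your proof is correct and follows essentially the same two-step approach as the paper: verify that the transformed matrices are psd, then check that the trace inner products are preserved via cyclicity. The only cosmetic difference is that the paper checks positive semidefiniteness directly via the quadratic form $y^T(S^TA^{(i)}S)y = (Sy)^TA^{(i)}(Sy) \geq 0$, whereas you use a Cholesky-type factorization $A^{(i)} = C^TC$; both are standard and equivalent.
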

\begin{proof}
For any $y \in \R^k$, $y^TS^TA^{(i)}Sy= (Sy)^TA^{(i)}(Sy) \geq 0$, because $A^{(i)}$ is psd. This shows $\Tilde{A}^{(i)}$ is psd. Same argument works for $\Tilde{B}^{(j)}$. 
Moreover $\tr(\Tilde{A}^{(i)}\Tilde{B}^{(j)}) = \tr(S^TA^{(i)}B^{(j)}S^{-T}) = \tr(A^{(i)}B^{(j)})$.
This shows that we get a
psd factorization of $M$.
\end{proof}

Given a matrix $M \in \R_+^{p \times q}$ and a size-$k$ psd factorization
$(A^{(1)}, \dots, A^{(p)}, B^{(1)}, \dots, B^{(q)})$ of $M$, 
the set of all size-$k$ psd factorizations of $M$ contains the set of size-$k$ psd factorizations obtained from the above psd factorization by the  $\operatorname{GL}(k)$-action. 
Consider the matrix multiplication map 
\[
\mu: \mathbb{R}^{p \times \binom{k+1}{2}} \times \mathbb{R}^{\binom{k+1}{2} \times q} \rightarrow \mathbb{R}^{p \times q}, \quad (\mathcal{A}, \mathcal{B}) \mapsto \mathcal{A} \mathcal{B},
\]
where the domain is restricted to pairs of matrices of rank $\binom{k+1}{2}$. The image of this map is the set of $p \times q$ matrices of rank $\binom{k+1}{2}$. Define the set
\[
F := \left \{ (\mathcal{C},\mathcal{C}^{-1}) \in \mathbb{R}^{\binom{k+1}{2} \times \binom{k+1}{2}} \times \mathbb{R}^{\binom{k+1}{2} \times \binom{k+1}{2}}: \mathcal{C} \text{ is invertible}\right \}.
\]
A factorization $(\mathcal{A} \mathcal{C},\mathcal{C}^{-1} \mathcal{B})$ is a psd factorization  if $\mathcal{A} \mathcal{C} \in (\mathcal{S}_+^k)^p$ and $\mathcal{C}^{-1} \mathcal{B} \in (\mathcal{S}_+^k)^q$. 
Therefore, after fixing a size-$k$ psd factorization of $M$, we can identify  all its size-$k$ psd factorizations 
with a subset of $F$. Similarly, all size-$k$ psd factorizations of $M$ obtained by the  $\operatorname{GL}(k)$-action can be identified with a further subset of $F$. We denote the latter set by $\mathcal{M}_{GL(k)}$. 

The tangent space of $F$ at $(I,I)$ is the affine space 
\[
T_{(I,I)} F:=\{(I+\mathcal{D},I-\mathcal{D})\, : \,  \mathcal{D} \in \mathbb{R}^{\binom{k+1}{2} \times \binom{k+1}{2}}\}.
\]
The tangent space $T_{(I,I)} F$ is the translate of the linear space
\[
L_{(I,I)} F:=\{(\mathcal{D},-\mathcal{D}) \in \mathbb{R}^{\binom{k+1}{2} \times \binom{k+1}{2}} \times \mathbb{R}^{\binom{k+1}{2} \times \binom{k+1}{2}}\}.
\]
We call the elements of $L_{(I,I)} F$ the tangent directions. We extend these definitions to $T_{(I,I)}\M_{\operatorname{GL}(k)}$ and
$L_{(I,I)}\M_{\operatorname{GL}(k)}$ 
as well.

\begin{proposition} \label{prop:tangent-space-at-M}
 The linear space $L_{(I,I)}\M_{\operatorname{GL}(k)}$  has dimension $k^2$ and it  
 is generated by $(\mathcal{D}, 
  -\mathcal{D})$ where $\mathcal{D}$ comes in two types:
 \begin{enumerate}
 \item for each $i\in [k]$, the matrix $\mathcal{D}$ where $\mathcal{D}_{a_{ii},a_{ii}} =2$, $\mathcal{D}_{a_{\min(i,j),\max(i,j)}, a_{\min(i,j),\max(i,j)}} = 1$ for $j\neq i$, and all other entries equal to zero, and
 \item for each $1 \leq i \neq j \leq k$, the matrix $\mathcal{D}$ where $\mathcal{D}_{a_{\min(i,j),\max(i,j)},a_{jj}} = \mathcal{D}_{a_{ii},a_{\min(i,j),\max(i,j)}} = \sqrt{2}$ and $\mathcal{D}_{a_{\min(i,\ell),\max(i,\ell)},a_{\min(j,\ell),\max(j,\ell)}} = 1$ for all $\ell\neq i,j$, and all other entries equal to zero. 
 \end{enumerate}
 This linear space always contains the set of diagonal matrices with equal entries on the diagonal.
 \end{proposition}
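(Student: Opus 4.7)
The plan is to realize $\mathcal{M}_{\operatorname{GL}(k)}$ as the image of a smooth map out of $\operatorname{GL}(k)$ and linearize at the identity. For $S \in \operatorname{GL}(k)$, the endomorphism $A \mapsto S^T A S$ of $\mathcal{S}^k$ is represented in the chosen vectorization by a matrix $P_S \in \mathbb{R}^{\binom{k+1}{2}\times\binom{k+1}{2}}$, and the $\operatorname{GL}(k)$-action on the fixed factorization sends $(\mathcal{A},\mathcal{B})$ to $(\mathcal{A} P_S^T, P_S^{-T}\mathcal{B})$: right multiplication of $\mathcal{A}$ by $P_S^T$ reproduces the vectorizations of $S^T A^{(i)} S$ as rows, while the $B$-factors compensate. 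Hence $\mathcal{M}_{\operatorname{GL}(k)}$ is the image of $\Psi: S \mapsto (P_S^T, P_S^{-T})$, with $\Psi(I) = (I,I)$, so $L_{(I,I)}\mathcal{M}_{\operatorname{GL}(k)}$ is the image of $d\Psi|_I: \mathbb{R}^{k \times k} \to L_{(I,I)} F$. Differentiating along $S(t)=I+tE$ yields $d\Psi|_I(E) = (\mathcal{D}(E), -\mathcal{D}(E))$, where $\mathcal{D}(E)$ is the matrix of the linearized action $A \mapsto E^T A + AE$ in the vectorization.

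I would then check that $E \mapsto \mathcal{D}(E)$ is injective: if $E^T A + AE$ vanishes for every $A \in \mathcal{S}^k$, the choice $A = I$ forces $E$ to be skew-symmetric, and the choice of a diagonal $A$ with pairwise distinct diagonal entries forces all off-diagonal entries of $E$ to vanish, so $E = 0$. This establishes $\dim L_{(I,I)}\mathcal{M}_{\operatorname{GL}(k)} = k^2$, matching the number of generators asserted in the statement.

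The core calculation is to evaluate $\mathcal{D}(E)$ on the basis $\{e_i e_j^T\}$ of $\mathbb{R}^{k\times k}$. For $E = e_i e_i^T$, a short computation shows $E^T A + AE$ has $(i,i)$-entry $2A_{ii}$, $(i,\ell) = (\ell,i)$-entries $A_{i\ell}$ for $\ell \neq i$, and zeros elsewhere; after vectorization and transposition this yields exactly the first type of $\mathcal{D}$, with a $2$ at $(a_{ii},a_{ii})$ and $1$'s at $(a_{\min(i,j),\max(i,j)}, a_{\min(i,j),\max(i,j)})$ for $j \neq i$. For $E = e_i e_j^T$ with $i \neq j$, the nonzero entries of $E^T A + AE$ are $2A_{ij}$ at $(j,j)$ and $A_{i\ell}$ at $(j,\ell)$ and $(\ell,j)$ for $\ell \neq j$; carefully tracking the $\sqrt{2}$ factors between the diagonal coordinates $A_{\ell\ell}$ and the off-diagonal coordinates $\sqrt{2}A_{\ell m}$ of $\vec$, one obtains the second type of $\mathcal{D}$, with the $\sqrt{2}$'s appearing precisely at $(a_{\min(i,j),\max(i,j)}, a_{jj})$ and $(a_{ii}, a_{\min(i,j),\max(i,j)})$, and $1$'s at the cross positions indexed by $\ell \neq i,j$. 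The $k^2$ matrices produced this way have pairwise disjoint support patterns (or, more formally, are linearly independent by the injectivity above), so together with the dimension count they span $L_{(I,I)}\mathcal{M}_{\operatorname{GL}(k)}$.

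For the last claim, take $E = \tfrac{d}{2}I$, so that $E^T A + AE = dA$; then $\mathcal{D}(\tfrac{d}{2}I) = dI$, placing $(dI,-dI)$ in $L_{(I,I)}\mathcal{M}_{\operatorname{GL}(k)}$ for every $d \in \mathbb{R}$. The main obstacle is not conceptual but bookkeeping: carefully matching the indexing scheme $a_{ii}, a_{\min(i,j),\max(i,j)}$ with the output of $E^T A + AE$ on each basis element, and keeping track of the $\sqrt{2}$ factors that the vectorization inserts on off-diagonal coordinates.
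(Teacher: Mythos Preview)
Your proposal is correct and follows essentially the same strategy as the paper: both differentiate the $\operatorname{GL}(k)$-action $S\mapsto (A\mapsto S^TAS)$ at $S=I$ along the coordinate directions $E_{ij}$ of $\mathfrak{gl}(k)$ and read off the resulting $\binom{k+1}{2}\times\binom{k+1}{2}$ matrices. The paper writes out the full matrix of $A\mapsto S^TAS$ in the vectorization and takes $\partial/\partial s_{ij}\big|_{S=I}$, whereas you use the Leibniz-rule formula $A\mapsto E^TA+AE$ directly; your explicit injectivity argument for $E\mapsto\mathcal{D}(E)$ is a welcome addition, since the paper's one-line claim that $\dim\mathcal{M}_{\operatorname{GL}(k)}=\dim\operatorname{GL}(k)$ implicitly relies on it.
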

 \begin{proof} Since $\dim(\mathcal{M})=\dim(\operatorname{GL}(k))=k^2$, the dimension of the tangent space is $k^2$.
 We note that the product $S^TA^{(i)}S$ is the same as the product of the $i$-th row of $\mathcal{A}$ with the matrix
\[
\begin{pmatrix}
s_{11}^2 & \cdots & s_{1k}^2 & \sqrt{2}s_{11}s_{12}& \cdots & \sqrt{2}s_{1k-1}s_{1k}\\
\vdots & \ddots & \vdots & \vdots & \ddots & \vdots \\
s_{k1}^2 & \cdots & s_{kk}^2 & \sqrt{2}s_{k1}s_{k2}& \cdots & \sqrt{2}s_{kk-1}s_{kk}\\
\sqrt{2}s_{11}s_{21} & \cdots & \sqrt{2}s_{1k}s_{2k} & s_{11}s_{22}+s_{12}s_{21} & \cdots & s_{1k-1}s_{2k}+s_{1k}s_{2k-1}\\
\vdots & \ddots & \vdots & \vdots & \ddots & \vdots\\
\sqrt{2}s_{11}s_{k1} & \cdots & \sqrt{2}s_{1k}s_{kk} & s_{11}s_{k2}+s_{12}s_{k1} & \cdots & s_{1k-1}s_{kk}+s_{1k}s_{kk-1}\\
\sqrt{2}s_{21}s_{31} & \cdots & \sqrt{2}s_{2k}s_{3k} & s_{21}s_{32}+s_{22}s_{31} & \cdots & s_{2k-1}s_{3k}+s_{2k}s_{3k-1}\\
\vdots & \ddots & \vdots & \vdots & \ddots & \vdots\\
\sqrt{2}s_{k-11}s_{k1} & \cdots & \sqrt{2}s_{k-1k}s_{kk} & s_{k-11}s_{k2}+s_{k-12}s_{k1} & \cdots & s_{k-1k-1}s_{kk}+s_{k-1k}s_{kk-1}\\
\end{pmatrix}
\]   
In the above matrix, the entry corresponding to index $(a_{ii},a_{jj})$ is $s_{ij}^2$. The entry corresponding to index $(a_{ij},a_{\ell \ell})$ is $\sqrt{2}s_{i\ell}s_{j\ell}$. The entry corresponding to index $(a_{ii},a_{j\ell})$ is $\sqrt{2}s_{ij}s_{i\ell}$. The entry corresponding to index $(a_{ij},a_{\ell o})$ is $s_{i\ell}s_{jo}+s_{io}s_{j\ell}$.
Taking the partial derivative with respect to $s_{ii}$ and then substituting the identity matrix gives a matrix $\mathcal{D}$ of the first type. 
Taking partial derivative with respect to $s_{ij}$, $i \neq j$, and then substituting the identity matrix gives a matrix $\mathcal{D}$ of the second type. Adding up all matrices of the first type produces a diagonal matrix with equal entries on the diagonal.
\end{proof}
We will see later that diagonal matrices with equal entries will  give $s$-trivial motions for $s\in[k]$.
\begin{example}
When $k=2$, the tangent space of $\mathcal{M}_{\operatorname{GL}(2)}$ at $(I,I)$ is the translate of the $4$-dimensional linear subspace $L_{(I,I)}\M_{\operatorname{GL}(2)}$ generated by 
$(D_i, -D_i)$ where 
\begin{equation} \label{eqn:1-trivial-motions-for-size-2-psd-factorizations}
D_1=\begin{pmatrix}
2 & 0 & 0\\
0 & 0 & 0\\
0 & 0 & 1
\end{pmatrix},
D_2=\begin{pmatrix}
0 & 0 & 0\\
0 & 2 & 0\\
0 & 0 & 1
\end{pmatrix},
D_3=\begin{pmatrix}
0 & 0 & 0\\
0 & 0 & \sqrt{2}\\
\sqrt{2} & 0 & 0
\end{pmatrix},
D_4=\begin{pmatrix}
0 & 0 & \sqrt{2}\\
0 & 0 & 0\\
0 & \sqrt{2} & 0
\end{pmatrix}.
\end{equation}
\end{example}

\begin{theorem} \label{thm:1-trivial}
The set of $1$-trivial motions of size-$2$ psd factorizations of matrices in $\mathcal{M}_{3,2}^{p \times q}$ is equal to $L_{(I,I)} \M_{\operatorname{GL}(2)}$, i.e., it is the linear subspace generated by the matrices~\eqref{eqn:1-trivial-motions-for-size-2-psd-factorizations}.
\end{theorem}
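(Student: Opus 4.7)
The plan is to prove the two inclusions. For $L_{(I,I)}\mathcal{M}_{\operatorname{GL}(2)} \subseteq \{\text{$1$-trivial motions}\}$, I would invoke \Cref{prop:tangent-space-at-M}: every element $(\mathcal{D},-\mathcal{D}) \in L_{(I,I)}\mathcal{M}_{\operatorname{GL}(2)}$ is the initial velocity of a one-parameter family arising from the $\operatorname{GL}(2)$-action on any fixed size-$2$ psd factorization. By~\Cref{lemma:rotation-gives-psd-factorization}, every point of this family is a genuine psd factorization for all small $t \geq 0$, so it is a $2$-infinitesimal motion and hence, by~\Cref{lemma:t-inf-motion-is-s-inf-motion}, a $1$-infinitesimal motion. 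Since this argument applies to any size-$2$ psd factorization of any matrix in $\mathcal{M}_{3,2}^{p\times q}$, the motion is $1$-trivial.

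For the reverse inclusion, let $D \in \R^{3 \times 3}$ correspond to a $1$-trivial motion. I would test $D$ against rank-$1$ factors: given any nonzero $a \in \R^2$, the rank-$1$ psd matrix $aa^T$ can be embedded as a factor in a size-$2$ psd factorization of some $M \in \mathcal{M}_{3,2}^{p\times q}$ (for $p,q \geq 3$) by completing the remaining $A$- and $B$-factors with generic rank-$1$ psd matrices so that $\mathcal{A}$ and $\mathcal{B}$ attain rank $3$; analogously for rank-$1$ factors $bb^T$ on the $B$-side. Since $\det(aa^T)=0$, the only relevant psd Taylor condition in \Cref{def:psd infinitesimal} is the determinant one, which because $\det(A^{(1)} + t\dot A^{(1)}) = t\tr(\operatorname{adj}(A^{(1)})\dot A^{(1)}) + t^2\det(\dot A^{(1)})$ reduces to $\tr(\operatorname{adj}(A^{(1)})\dot A^{(1)}) \geq 0$. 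Substituting $\dot{\underline{a}}^{(1)} = D^T \underline{a}^{(1)}$ with $\underline{a} = (a_1^2, a_2^2, \sqrt{2} a_1 a_2)^T$, and analogously $\dot{\underline{b}}^{(j)} = -D\underline{b}^{(j)}$, a short computation yields two homogeneous quartics
\begin{align*}
p_A(a_1,a_2) &= d_{12} a_1^4 + d_{21} a_2^4 + (d_{11}+d_{22}-2d_{33})\,a_1^2 a_2^2 + \sqrt{2}(d_{32}-d_{13})\,a_1^3 a_2 + \sqrt{2}(d_{31}-d_{23})\,a_1 a_2^3,\\
p_B(b_1,b_2) &= -d_{21} b_1^4 - d_{12} b_2^4 - (d_{11}+d_{22}-2d_{33})\,b_1^2 b_2^2 + \sqrt{2}(d_{31}-d_{23})\,b_1^3 b_2 + \sqrt{2}(d_{32}-d_{13})\,b_1 b_2^3,
\end{align*}
which must be nonnegative on all of $\R^2$ by $1$-triviality.

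The heart of the proof is then an elementary polynomial analysis. Evaluating at $(a_1,0), (0,a_2), (b_1,0), (0,b_2)$ and using that the $a_1^4, a_2^4$ coefficients of $p_A$ and the $b_2^4, b_1^4$ coefficients of $p_B$ carry opposite signs forces $d_{12} = d_{21} = 0$. Requiring the resulting univariate cubic $p_A(a_1,1)$ to be globally nonnegative then forces its cubic and linear coefficients to vanish, giving $d_{13} = d_{32}$ and $d_{23} = d_{31}$. Finally, the $a_1^2 a_2^2$ coefficient in $p_A$ and the $b_1^2 b_2^2$ coefficient in $p_B$ are negatives of one another, so $d_{11}+d_{22}-2d_{33}=0$. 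These five independent linear relations cut $\R^{3\times 3}$ down to a $4$-dimensional subspace, and a direct check shows that $D_1, D_2, D_3, D_4$ from~\eqref{eqn:1-trivial-motions-for-size-2-psd-factorizations} all satisfy them and are linearly independent, so they span this subspace and the claimed equality follows. The main obstacle will be carefully justifying the embedding claim in the reverse direction so that $a$ and $b$ genuinely range over all of $\R^2\setminus\{0\}$, making the polynomial inequalities above available everywhere rather than only on a restricted locus.
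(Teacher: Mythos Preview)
Your reverse inclusion is correct and takes a genuinely different (and more illuminating) route than the paper. The paper simply exhibits one explicit $3\times 3$ factorization and computes that its set of $1$-infinitesimal motions is exactly $L$. You instead extract the two quartic inequalities $p_A\ge 0$ and $p_B\ge 0$ valid on all of $\R^2$ by embedding an arbitrary rank-$1$ factor into some $M\in\mathcal{M}_{3,2}^{p\times q}$; this embedding is routine (pick the remaining $a_i,b_j$ generically so $\mathcal A,\mathcal B$ have rank $3$, and then $M=\mathcal A\mathcal B$ has rank $3$ and hence psd rank exactly $2$). Your polynomial analysis then isolates the five linear constraints $d_{12}=d_{21}=0$, $d_{13}=d_{32}$, $d_{23}=d_{31}$, $d_{11}+d_{22}-2d_{33}=0$, which explains \emph{why} the four matrices in~\eqref{eqn:1-trivial-motions-for-size-2-psd-factorizations} span the answer rather than merely verifying it on one example. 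One small point: your assertion that ``the only relevant psd Taylor condition is the determinant one'' is true but is exactly the content of \Cref{lemma:1-inf-motions-enough-to-consider-determinants}; you should cite it or reproduce the short argument.

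There is, however, a real error in your first inclusion. You claim that the $\operatorname{GL}(2)$-family ``is a $2$-infinitesimal motion'' because each member is a psd factorization. But condition~(3) of \Cref{def:psd infinitesimal} for $s=2$ requires the \emph{linearization} $A^{(i)}+t\dot A^{(i)}$ to be psd, not $A^{(i)}(t)$ itself. Concretely, take $A=\bigl(\begin{smallmatrix}1&0\\0&0\end{smallmatrix}\bigr)$ and $\dot S=\bigl(\begin{smallmatrix}0&1\\0&0\end{smallmatrix}\bigr)$, so $\dot A=\dot S^T A+A\dot S=\bigl(\begin{smallmatrix}0&1\\1&0\end{smallmatrix}\bigr)$; then $A+t\dot A=\bigl(\begin{smallmatrix}1&t\\t&0\end{smallmatrix}\bigr)$ has determinant $-t^2<0$. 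This is consistent with \Cref{thm:trivial_general_case}, which says the only $2$-trivial motions are $D=dI$, so not all of $D_1,\dots,D_4$ can be $2$-infinitesimal for every factorization. The repair is easy and bypasses \Cref{lemma:t-inf-motion-is-s-inf-motion}: argue $1$-infinitesimality directly. Since $A^{(i)}(t)=A^{(i)}+t\dot A^{(i)}+O(t^2)$, the functions $[A^{(i)}(t)]_I$ and $[A^{(i)}+t\dot A^{(i)}]_I$ agree to first order in $t$; nonnegativity of the former on $[0,\varepsilon)$ then forces the first-order truncation to be nonnegative (when the constant term vanishes, the linear coefficient must be $\ge 0$), which is condition~(3) for $s=1$. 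The paper avoids this subtlety altogether by directly computing $T^1([A+t\dot A]_I)$ for a general linear combination $D=\sum\lambda_i D_i$.
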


\begin{proof}
First we will show that a linear combination of matrices in~\eqref{eqn:1-trivial-motions-for-size-2-psd-factorizations} gives a 1-infinitesimal motion. Let $A \in \mathcal{S}^2_+$, $t \geq 0$, and $D= \sum_{i=1}^4 \lambda_i D_i$. Then
\[
 A + t \dot{A} =
\begin{pmatrix}
a_{11} + 2t(\lambda_1 a_{11} + \lambda_3 a_{12}) & 
  a_{12} + t(\lambda_1 a_{12} + \lambda_2 a_{12} + \lambda_3 a_{22} + \lambda_4 a_{11})\\
  a_{12} + 
   t(\lambda_1 a_{12} + \lambda_2 a_{12} + \lambda_3 a_{22} + \lambda_4 a_{11}) &
  a_{22} + 2t(\lambda_2 a_{22} + \lambda_4 a_{12})
  \end{pmatrix}.
\]
We want to show that $T^1([A + t \dot{A}]_I) = T^1([A + t \dot{A}]_I) \geq 0$ for all $I \subseteq [2]$. We first consider the diagonal entries. If $[A]_{\{i\}}=a_{ii}>0$, then $[A + t \dot{A}]_{\{i\}} >0$ for small enough $t>0$. If $[A]_{\{i\}}=a_{ii}=0$, then also $a_{12}=0$ because of positive semidefiniteness, and hence $[A + t \dot{A}]_{\{i\}} =0$ for any $t$. The linear part of the determinant is  
\[
T^1([A + t \dot{A}]_{\{1,2\}}) = a_{11} a_{22} -a_{12}^2 + 2 (a_{11} a_{22} - a_{12}^2 ) (\lambda_1 + \lambda_2) t.
\]
If $T^1([A]_{\{1,2\}})=a_{11} a_{22} -a_{12}^2>0$, then $T^1([A + t \dot{A}]_{\{1,2\}}) >0$ for small enough $t>0$. If $T^1([A]_{\{1,2\}})=a_{11} a_{22} -a_{12}^2=0$, then  $T^1([A + t \dot{A}]_{\{1,2\}}) =0$ for all $t$. Hence all $(D,-D)$ in the linear space generated by $(D_i,-D_i)$  in~\eqref{eqn:1-trivial-motions-for-size-2-psd-factorizations} are 1-trivial.

To show that there are no further 1-trivial motions, we consider the size-$2$ psd factorization given by the factors
    \begin{align*}
        A^{(1)} &= \begin{pmatrix}
            1 & 0 \\
            0 & 0
        \end{pmatrix}, \quad A^{(2)} = \begin{pmatrix}
            \frac{1}{4} & -\frac{1}{4} \\
            -\frac{1}{4} & \frac{1}{4}
        \end{pmatrix}, \quad A^{(3)} = \begin{pmatrix}
            0 & 0 \\
            0 & 1
        \end{pmatrix}, \\
        B^{(1)} &= \begin{pmatrix}
            \frac{1}{4} & \frac{3}{4} \\
            \frac{3}{4} & \frac{9}{4}
        \end{pmatrix}, \quad B^{(2)} = \begin{pmatrix}
            \frac{1}{4} & -\frac{1}{4} \\
            -\frac{1}{4} & \frac{1}{4}
        \end{pmatrix}, \quad B^{(3)} = \begin{pmatrix}
            1 & \frac{1}{4} \\
            \frac{1}{4} & \frac{1}{16}
        \end{pmatrix}.
    \end{align*}
One can explicitly compute that the set of its $1$-infinitesimal motions is precisely equal to the linear subspace spanned by the matrices in~\eqref{eqn:1-trivial-motions-for-size-2-psd-factorizations}. 
\end{proof}
We believe that  $L_{(I,I)} \M_{\operatorname{GL}(k)}$ at $(I,I)$ coincides with the set of $1$-trivial motions for any $k$. 
\begin{conjecture}
The set of $1$-trivial motions of size-$k$ psd factorizations of matrices in $\mathcal{M}_{ \binom{k+1}{2},k}^{p \times q}$ is equal to $L_{(I,I)}\M_{\operatorname{GL}(k)}$.
\end{conjecture}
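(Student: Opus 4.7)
The plan is to mirror the structure of \Cref{thm:1-trivial}, proving the two containments separately.

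For the forward direction $L_{(I,I)}\M_{\operatorname{GL}(k)} \subseteq \{1\text{-trivial motions}\}$, given $\mathcal{D} \in L_{(I,I)}\M_{\operatorname{GL}(k)}$, the preceding proposition produces $D' \in \R^{k \times k}$ such that the smooth path $S(t) = I + tD'$ in $\operatorname{GL}(k)$ satisfies $S(0) = I$ and its induced action $\tilde{S}(t)$ on vectorizations has $\dot{\tilde{S}}(0) = \mathcal{D}$. For any psd factorization $(A^{(i)}, B^{(j)})$ of any $M \in \M_{\binom{k+1}{2},k}^{p \times q}$, \Cref{lemma:rotation-gives-psd-factorization} implies that $(S(t)^T A^{(i)} S(t), S(t)^{-1} B^{(j)} S(t)^{-T})$ is a psd factorization of $M$ for all small $t$. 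Consequently each matrix in the path stays positive semidefinite, so all principal minors remain nonnegative along the path and the Taylor approximation conditions of \Cref{def:psd infinitesimal} hold. Hence $\mathcal{D}$ is a $1$-trivial motion (indeed even a $k$-trivial motion).

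For the reverse direction, the plan is to exhibit a specific size-$k$ psd factorization in $\M_{\binom{k+1}{2},k}^{\binom{k+1}{2} \times \binom{k+1}{2}}$ whose set of $1$-infinitesimal motions equals $L_{(I,I)}\M_{\operatorname{GL}(k)}$. Generalizing the $k = 2$ example in the proof of \Cref{thm:1-trivial}, take rank-one factors $A^{(i)} = a_i a_i^T$ and $B^{(j)} = b_j b_j^T$ where the $a_i$ (respectively $b_j$) range over vectors chosen so that the vectorizations form a basis of $\R^{\binom{k+1}{2}}$; for instance, the standard basis vectors $e_l$ ($l \in [k]$) together with the differences $e_l - e_m$ ($1 \leq l < m \leq k$), possibly rescaled to ensure genericity. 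Then $M = \A \B$ has rank $\binom{k+1}{2}$, and by \cite[Proposition~2]{Fawzi:2015} combined with the explicit factorization the psd rank is exactly $k$, so $M \in \M_{\binom{k+1}{2},k}^{\binom{k+1}{2} \times \binom{k+1}{2}}$.

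For each rank-one factor the $1$-infinitesimal motion conditions, translated via $\dot{\underline{a^{(i)}}} = D^T \underline{a^{(i)}}$ and $\dot{\underline{b^{(j)}}} = -D \underline{b^{(j)}}$, give linear inequalities on the entries of $D$: each vanishing diagonal entry $A^{(i)}_{mm}$ yields $\dot{A}^{(i)}_{mm} \geq 0$, and each vanishing $2 \times 2$ principal minor (all of them vanish on a rank-one matrix) yields $\tr(\operatorname{adj}(A^{(i)}_{\{m,n\}}) \dot{A}^{(i)}_{\{m,n\}}) \geq 0$, with analogous inequalities from the $B^{(j)}$. The forward direction already guarantees that every $\mathcal{D} \in L_{(I,I)}\M_{\operatorname{GL}(k)}$ satisfies these; since $-\mathcal{D}$ also lies in $L_{(I,I)}\M_{\operatorname{GL}(k)}$, each inequality must in fact vanish identically on that subspace, i.e., each is a linear equation in $D$. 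The final step is to verify that these equations strictly cut the ambient space down to $L_{(I,I)}\M_{\operatorname{GL}(k)}$.

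The main obstacle is exactly this last verification: showing that the inequalities from the rank-one construction pin the $\binom{k+1}{2}^2$-dimensional space of matrices $D$ down to the $k^2$-dimensional subspace $L_{(I,I)}\M_{\operatorname{GL}(k)}$. For $k = 2$ this reduces to a direct computation in a $9$-dimensional space, but for larger $k$ the combinatorial bookkeeping of which pairs of rank-one factors produce which constraints on which entries of $D$ becomes substantially more intricate. A promising strategy is to decompose $D$ according to the ``type'' of its index pair (diagonal-diagonal, diagonal-off-diagonal, off-diagonal-off-diagonal) and, block by block, identify pairs of rank-one factors whose combined inequalities force the corresponding entries of $D$ to match those of some element in the explicit basis of $L_{(I,I)}\M_{\operatorname{GL}(k)}$ given by the proposition. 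Equivalently, since the forward direction already gives the lower bound $k^2$ on the dimension of the $1$-trivial subspace, it suffices to extract $\binom{k+1}{2}^2 - k^2$ linearly independent equations from the inequality system.
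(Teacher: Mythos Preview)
The statement you are attempting is stated in the paper as a \emph{conjecture}; the paper proves only the case $k=2$ (\Cref{thm:1-trivial}) and explicitly leaves general $k$ open. There is therefore no paper proof to compare against, and your proposal should be judged as an attack on an open problem.

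Your forward inclusion has the right idea but is argued too loosely, and the parenthetical is actually false. You write that because $S(t)^T A^{(i)} S(t)$ stays psd, ``the Taylor approximation conditions of \Cref{def:psd infinitesimal} hold.'' But those conditions are imposed on the \emph{linearized} matrix $A^{(i)}+t\dot A^{(i)}$, not on $S(t)^T A^{(i)} S(t)$; these two matrix-valued functions agree only to first order at $t=0$. For $s=1$ that is enough: the principal minors of the two paths share constant and linear terms, and since $[S(t)^T A^{(i)} S(t)]_I\ge 0$ holds for $t$ in a \emph{two-sided} neighborhood of $0$, the linear coefficient must vanish whenever the constant term does, giving $T^1([A^{(i)}+t\dot A^{(i)}]_I)\ge 0$. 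You should make this step explicit. For $s\ge 2$, however, the higher Taylor coefficients of the two paths differ, so your parenthetical ``indeed even a $k$-trivial motion'' contradicts \Cref{thm:trivial_general_case}, which shows the $k$-trivial motions are only the one-dimensional family $D=dI$, strictly smaller than $L_{(I,I)}\M_{\operatorname{GL}(k)}$.

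For the reverse inclusion you correctly flag the verification as the obstacle, but the sketch has a further gap. The $1$-infinitesimal motions of a fixed factorization form a polyhedral \emph{cone} $\{\underline D:\ell_i(\underline D)\ge 0\}$, not a priori a linear subspace. Showing that the linear functionals $\ell_i$ have common zero set $L_{(I,I)}\M_{\operatorname{GL}(k)}$ (your ``extract $\binom{k+1}{2}^2-k^2$ independent equations'') does \emph{not} force the cone to equal that subspace; you would additionally need that every inequality is paired with its negative, e.g.\ coming from another factor or another vanishing minor. Your proposed rank-one construction may well have enough symmetry for this, but that is precisely what must be checked, and for $k>2$ you must also account for the constraints from principal minors of size $|I|\ge 3$ (all of which vanish on rank-one matrices), not only $|I|\le 2$. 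None of this is addressed, and it is exactly the combinatorics that keeps the statement a conjecture.
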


\subsection{$k$-Trivial motions}
\label{sec:k-trivial motions}

In this subsection we prove that each $k$-trivial motion of
a size-$k$ psd factorization is given by a matrix of the form $D = dI$. First we show the validity of a simple claim.
\begin{lemma} \label{lemma:diag-eq}
The inequalities
\begin{align*}
(D_{ii}+D_{jj}-2D_{kk})t+(D_{ii} D_{jj} - D_{kk}^2)t^2 &\geq 0,\\
-(D_{ii}+D_{jj}-2D_{kk})t+(D_{ii} D_{jj} - D_{kk}^2)t^2 & \geq 0
\end{align*}
where $t \in [0,\varepsilon)$ for some $\varepsilon>0$ imply that $D_{ii}=D_{jj}=D_{kk}$.
\end{lemma}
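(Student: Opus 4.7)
The plan is to extract two separate constraints from these inequalities: one on the linear coefficient $L := D_{ii}+D_{jj}-2D_{kk}$ and one on the quadratic coefficient $Q := D_{ii}D_{jj}-D_{kk}^2$, and then combine them with the AM--GM inequality to force equality of all three diagonal entries.

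First I would add the two inequalities to eliminate the linear term, obtaining $2Q t^2 \geq 0$ on $[0,\varepsilon)$, which gives $Q \geq 0$, i.e.\ $D_{ii}D_{jj} \geq D_{kk}^2$. Next I would isolate the linear term: dividing each inequality by $t > 0$ yields $\pm L + Q t \geq 0$, and letting $t \to 0^+$ in each gives $L \geq 0$ and $-L \geq 0$ respectively. Hence $L = 0$, i.e.\ $D_{ii}+D_{jj} = 2 D_{kk}$.

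Finally I would combine the two: by AM--GM applied to the real numbers $D_{ii}$ and $D_{jj}$ we always have $D_{ii}D_{jj} \leq \bigl(\tfrac{D_{ii}+D_{jj}}{2}\bigr)^2 = D_{kk}^2$, while we have just shown $D_{ii}D_{jj} \geq D_{kk}^2$. Equality in AM--GM forces $D_{ii} = D_{jj}$, and substitution into $D_{ii} + D_{jj} = 2D_{kk}$ then forces $D_{ii} = D_{jj} = D_{kk}$.

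There is really no substantive obstacle here; the only point requiring a little care is that division by $t$ is valid only for $t>0$, so one must pass to the one-sided limit $t \to 0^+$ rather than substitute $t=0$ directly. Everything else is a short chain of elementary implications.
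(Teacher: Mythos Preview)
Your proof is correct and follows essentially the same route as the paper: both add the inequalities to get $Q\ge 0$, divide through by $t>0$ to force $L=0$, and then substitute $D_{kk}=\tfrac{D_{ii}+D_{jj}}{2}$ into $Q\ge 0$ to conclude $D_{ii}=D_{jj}$. The only cosmetic remark is that what you call ``AM--GM'' is really the identity $\bigl(\tfrac{x+y}{2}\bigr)^2 - xy = \tfrac{(x-y)^2}{4}\ge 0$, which holds for all real $x,y$ (not just nonnegative ones), so no sign hypothesis on $D_{ii},D_{jj}$ is needed.
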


\begin{proof} 
Adding the inequalities gives us 
$2(D_{ii} D_{jj} - D_{kk}^2)t^2 \geq 0$, and  we conclude that $D_{ii} D_{jj} - D_{kk}^2 \geq 0$.
Factoring out $t$ from the two inequalities, we  see 
that  
$$-(D_{ii} D_{jj} - D_{kk}^2)t \leq  D_{ii}+D_{jj}-2D_{kk}\leq (D_{ii} D_{jj} - D_{kk}^2)t.$$ Since this must hold for all $t\in (0,\varepsilon)$ we conclude that $D_{ii}+D_{jj}-2D_{kk}=0$, so \begin{equation}\label{eq:1}
    D_{kk} = \frac{D_{ii}+D_{jj}}{2}.
\end{equation}
Making the above substitution for $D_{kk}$ in $D_{ii} D_{jj} - D_{kk}^2 \geq 0$ gives us $D_{ii} = D_{jj}$. Now together with equation \eqref{eq:1} we get the result.
\end{proof}

\begin{theorem}\label{thm:trivial_general_case}
The set of $k$-trivial motions of size-$k$ psd factorizations of matrices in $\M_{\binom{k+1}{2},k}^{p \times q}$ is given by $\binom{k+1}{2} \times \binom{k+1}{2}$ diagonal matrices $D=dI$ with $d \neq 0$.
\end{theorem}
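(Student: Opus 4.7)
The forward direction—that every $D = dI$ gives a $k$-trivial motion—is already handled in \Cref{ex: trivial motions}. For the converse, my plan is to exhibit a single size-$k$ psd factorization of some matrix $M \in \M_{\binom{k+1}{2},k}^{p \times q}$ such that the $k$-infinitesimal requirement on $D$ alone forces $D = dI$. I will take $p = q = \binom{k+1}{2}$ and use the rank-one factors $A^{(l)} = B^{(l)} = e_l e_l^T$ for $l \in [k]$ together with $A^{(i,j)} = B^{(i,j)} = (e_i + e_j)(e_i + e_j)^T$ for $1 \leq i < j \leq k$. One checks by row reduction that the rows of $\A$ (and columns of $\B$) reduce to the standard basis of $\R^{\binom{k+1}{2}}$, so both $\A$ and $\B$ are invertible, $M = \A\B$ has rank $\binom{k+1}{2}$ and nonnegative entries, and its psd rank equals $k$ (since psd rank at most $k$ forces rank at most $\binom{k+1}{2}$). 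Thus $M$ lies in $\M_{\binom{k+1}{2},k}^{p \times q}$.

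The first step of the argument uses the factors $e_l e_l^T$ to reduce $D$ to block-diagonal form. Since $\underline{a}^{(l)} = e_l$, $\dot A^{(l)}$ is the devectorization of the $l$-th row of $D$, so that $\dot A^{(l)}_{mm} = D_{lm}$ and $\dot A^{(l)}_{mn} = D_{l, a_{mn}}/\sqrt{2}$. The $\{l,m\}$ principal submatrix of $A^{(l)} + t \dot A^{(l)}$ has determinant $t D_{lm}(1 + tD_{ll}) - t^2 D_{l, a_{lm}}^2/2$, whose leading-order-in-$t$ nonnegativity forces $D_{lm} \geq 0$ for $l \neq m$ in $[k]$. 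The symmetric computation with $B^{(m)} = e_m e_m^T$, where $\dot B^{(m)}$ is minus the $m$-th column of $D$ devectorized, yields $D_{lm} \leq 0$, so $D_{lm} = 0$. With these diagonals vanishing, the $\{l,m\}$ determinant reduces to $-t^2 D_{l, a_{lm}}^2/2 \geq 0$, and the $\{m,n\}$ determinant (for $m, n \in [k] \setminus \{l\}$) reduces to $-t^2 D_{l, a_{mn}}^2/2 \geq 0$; together with the analogous identities on the $B$ side, every entry of $D$ in the first $k$ rows or first $k$ columns vanishes except the diagonal. Setting $\lambda_l := D_{ll}$, this gives $D = \operatorname{diag}(\Lambda, E)$ with $\Lambda = \operatorname{diag}(\lambda_1, \ldots, \lambda_k)$ and some $E \in \R^{\binom{k}{2} \times \binom{k}{2}}$.

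The second step pins down $\Lambda$ and $E$ using the factors $(e_i + e_j)(e_i + e_j)^T$. Exploiting the block structure of $D$, one finds that $\dot A^{(i,j)}$ has nonzero diagonal entries only at positions $(i,i)$ and $(j,j)$, equal to $\lambda_i$ and $\lambda_j$ respectively, and off-diagonal entries $\dot A^{(i,j)}_{lm} = E_{(ij),(lm)}$, where I index rows and columns of $E$ by ordered pairs in $[k]$. The $\{i,j\}$ principal submatrix of $A^{(i,j)} + t \dot A^{(i,j)}$ then has determinant $t(\lambda_i + \lambda_j - 2 E_{(ij),(ij)}) + t^2(\lambda_i \lambda_j - E_{(ij),(ij)}^2)$; the analogous calculation for $B^{(i,j)}$ yields $-t(\lambda_i + \lambda_j - 2 E_{(ij),(ij)}) + t^2(\lambda_i \lambda_j - E_{(ij),(ij)}^2)$. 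Applying \Cref{lemma:diag-eq} with $D_{ii} \mapsto \lambda_i$, $D_{jj} \mapsto \lambda_j$, $D_{kk} \mapsto E_{(ij),(ij)}$ to these two nonnegativity conditions gives $\lambda_i = \lambda_j = E_{(ij),(ij)}$. The remaining $2 \times 2$ principal submatrices of $A^{(i,j)} + t \dot A^{(i,j)}$ and $B^{(i,j)} + t \dot B^{(i,j)}$ each contain a zero diagonal entry, so their determinants reduce to $-t^2 E_{(ij),(lm)}^2 \geq 0$ (respectively $-t^2 E_{(lm),(ij)}^2 \geq 0$) for $(lm) \neq (ij)$, killing the corresponding off-diagonal entries of $E$. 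As $(i,j)$ ranges over all pairs in $[k]$, all $\lambda_i$ and all diagonal entries of $E$ collapse to a single scalar $d$, so $D = dI$. The main bookkeeping challenge will be keeping the $\sqrt{2}$ factors from the vectorization convention straight throughout, and verifying that the chosen rank-one factors do produce an element of $\M_{\binom{k+1}{2},k}^{p \times q}$; the structural input beyond elementary determinant calculations is supplied by \Cref{lemma:diag-eq}.
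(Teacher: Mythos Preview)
Your argument is correct and mirrors the paper's almost exactly: reduce $D$ to block-diagonal form using the factors $e_l e_l^T$, then use rank-one factors indexed by pairs $(i,j)$ together with \Cref{lemma:diag-eq} to collapse all diagonal entries and eliminate the off-diagonal part of $E$. The only differences are that the paper takes $A^{(i,j)} = (e_i - e_j)(e_i-e_j)^T$ rather than $(e_i+e_j)(e_i+e_j)^T$ (a harmless variation, since the $\{i,j\}$-minor computation and \Cref{lemma:diag-eq} go through identically), and that it explicitly invokes \Cref{lem:sub-facorization-rigid} to pass from $p = q = \binom{k+1}{2}$ to general $p, q$, a reduction you should also record.
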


\begin{proof}
Example \ref{ex: trivial motions} implies that $D=dI$ gives $k$-trivial motions. For the converse, we will construct a size-$k$ psd factorization of a particular matrix in $\M_{\binom{k+1}{2},k}^{p \times q}$ and show that the $k$-infinitesimal motions for this factorization are given by $D=dI$. Note that $p,q \geq \binom{k+1}{2}$ since the rank of the matrices in $\M_{\binom{k+1}{2},k}^{p \times q}$ is $\binom{k+1}{2}$. 
By Lemma \ref{lem:sub-facorization-rigid}, it is enough to construct a matrix in $\M_{\binom{k+1}{2}, k}^{p' \times q'}$ where 
$p'= q' = \binom{k+1}{2}$ with an $k$-infinitesimally rigid factorization, since we can embed such matrix as a submatrix of $M \in \M_{\binom{k+1}{2},k}^{p \times q}$
and extend the $k$-infinitesimally rigid factorization to one of $M$. Hence without loss of generality we can assume
$p=q=\binom{k+1}{2}$.

Now we let  $A^{(i)}=B^{(i)}$, $i \in [k]$, to be the $k \times k$ matrices with the $(i,i)$ entry equal to $1$, and all other entries equal to $0$. 
For $1 \leq i < j \leq k$,  we define $A^{(i,j)}$ to be  the $k \times k$ matrices with the $(i,i)$ and $(j,j)$ entries equal to $1$, the $(i,j)$ and $(j,i)$ entries equal to $-1$, and all other entries equal to $0$. Similarly, we define $B^{(i,j)}$ to be the $k \times k$ matrices with the $(i,i)$, $(j,j)$, $(i,j)$ and $(j,i)$ entries equal to $1$, and all other entries equal to $0$. All these matrices are symmetric psd, so 
they form a size-$k$ psd factorization of the matrix $\A \B =M\in \M_{\binom{k+1}{2},k}^{p \times q}$. Moreover, both $\A$ and $\B$ are square triangular matrices with nonzero diagonal entries. Hence they have full rank.

By definition 
\begin{equation*}
    \dot \A = \begin{pmatrix}
        \rowvec{\underline{\dot a}^{(1)T}}\\
        \vdots \\
    \rowvec{\underline{\dot a}^{(k)T}}\\
   \rowvec{\underline{\dot a}^{(1,2)T}} \\
     \vdots  \\
    \rowvec{\underline{\dot a}^{(k-1,k)T}} \\
    \end{pmatrix}, \text{ and }\dot \B = \begin{pmatrix}
\vert & & \vert & \vert & & \vert  \\
\underline{\dot b}^{(1)} & \cdots &\underline{\dot b}^{(k)} & \underline{\dot b}^{(1,2)} & \cdots & \underline{\dot b}^{(k-1,k)} \\
\vert & & \vert & \vert & & \vert
\end{pmatrix},
\end{equation*}
and $\vec(\dot A^{(i)})^T = (D_{i1}, \dots, D_{ik}, \dots, D_{i \binom{k+1}{2}})$ is the $i$th row of matrix $D$. Therefore
\begin{align*}
    A^{(i)} + t \dot A^{(i)} = \begin{pmatrix}
        D_{i1}t & \cdots & \cdots & \cdots & \frac{1}{\sqrt{2}}D_{i(2k-1))}t \\
          \vdots & \ddots  & &  &\vdots \\
        \vdots & & 1 + D_{ii}t & & \vdots \\
        \vdots & & & \ddots & \vdots\\
        \frac{1}{\sqrt{2}}D_{i(2k-1)}t & \cdots & \cdots & \cdots & D_{ik}t
    \end{pmatrix}.
\end{align*}
Similarly, $\text{vec}(\dot B^{(i)}) = (-D_{1i}, \dots, -D_{ki}, \dots, -D_{\binom{k+1}{2}}i)$,
and 
\begin{align*}
    B^{(i)} + t \dot B^{(i)} = \begin{pmatrix}
        -D_{1i}t & \cdots & \cdots & \cdots & -\frac{1}{\sqrt{2}}D_{(2k-1)i)}t \\
          \vdots & \ddots  & &  &\vdots \\
        \vdots & & 1 - D_{ii}t & & \vdots \\
        \vdots & & & \ddots & \vdots\\
        -\frac{1}{\sqrt{2}}D_{(2k-1)i}t & \cdots & \cdots  & \cdots & -D_{ki}t
    \end{pmatrix}.
\end{align*}
In order for the matrices $A^{(i)} + t \dot A^{(i)}$ and $B^{(i)} + t \dot B^{(i)}$ to be positive semidefinite, all their principal minors need to be nonnegative. These include the diagonal entries, and we get the following two sets of nonnegativity conditions: 
\[
D_{i1}t \geq 0, \ D_{i2}t \geq 0,\ \dots, \ D_{i(i-1)}t \geq 0, \ D_{i(i+1)}t \geq 0, \dots , \ D_{ik}t \geq 0,
\]  
\[
-D_{1i}t \geq 0, \ -D_{2i}t \geq 0,\ \dots, \ -D_{(i-1)i}t \geq 0, \ -D_{(i+1)i}t \geq 0, \dots , \ -D_{ki}t \geq 0.
\] 
Now consider these conditions for all $i \in [k]$. 
In particular, we have for all pairs $(i',j') \in [k] \times [k], i'\neq j'$, the conditions $D_{i'j'}t \geq 0$ and $-D_{i'j'}t \geq 0$, for some $t \in [0, \varepsilon)$, $\varepsilon > 0$. 
This implies that $D_{i'j'} = 0$. 
Therefore, the top left $k \times k$ block of matrix $D$ is diagonal.

Consider again $i \in [k]$. 
Denote by $[A^{(i)} + t \dot A^{(i)}]_{\{i',j'\}}$ the $2 \times 2$ principal minor of the matrix $A^{(i)} + t \dot A^{(i)}$ obtained from the rows and columns indexed by $\{i',j'\}$. 
From the above it follows that for  $i'= 1, \dots, k-1$ and $j' = i'+1, \dots , k$, we get the  non-negativity conditions
\begin{align*}
    -\frac{1}{2}D_{i(k+1)}^2t^2 \geq 0,\ \dots , \ -\frac{1}{2}D_{i\binom{k+1}{2}}^2t^2 \geq 0.
\end{align*}
This implies that $D_{i(k+1)} = 0, \ \ldots, D_{i\binom{k+1}{2}} = 0$.
Considering the same principal minors of the matrices $B^{(j)} + t \dot B^{(j)}$ for $j \in [k]$ gives similar non-negativity conditions
\begin{align*}
    -\frac{1}{2}D_{(k+1)i}^2t^2 \geq 0,\ \dots , \ -\frac{1}{2}D_{\binom{k+1}{2}i}^2t^2 \geq 0,
\end{align*}
from which it follows that  $D_{(k+1)i} = 0, \ldots, D_{\binom{k+1}{2}i} = 0$. From these conditions we conclude that 

\[
D = \begin{pmatrix}
D_{11} & \cdots & 0 & 0 & \cdots & 0 \\

\vdots & \ddots & \vdots & \vdots & \ddots & \vdots \\

0 & \cdots & D_{kk} & 0 & \cdots & 0 \\ 

0 & \cdots & 0 & D_{(k+1)(k+1)} & \cdots & D_{(k+1)\binom{k+1}{2}} \\

\vdots & \ddots & \vdots & \vdots & \ddots & \vdots \\ 
 
0 & \cdots & 0 & D_{\binom{k+1}{2}(k+1)} & \cdots & D_{\binom{k+1}{2}\binom{k+1}{2}}
\end{pmatrix}.
\]

Consider now the matrices $B^{(i,j)}+t \dot B^{(i,j)}$, for $i=1, \dots, k-1$ and $j = i+1, \dots, k$. 
From the above we see that they are the form
\begin{equation*}
    \begin{pNiceArray}{c@{}cw{c}{0em}cccw{c}{0em}c@{}c}[xdots/shorten=1em, xdots/inter=0.6em]
    
    0 & -D_{(k+1)m}t & \Cdots & & & & & & -D_{(2k-1)m}t \\
    
    -D_{(k+1)m}t & 0 & & & & & & & -D_{(3k-3)m}t \\
    
    \Vdots & & \ddots & & & & & & \\
    
    & & & 1 -D_{ii}t & \cdots & 1-D_{mm}t & & & \\
    
    & & & \vdots & \ddots & \vdots & & & \Vdots \\
    
    & & & 1-D_{mm}t & \cdots & 1 -D_{jj}t & & & \\
    
    & & & & & & \ddots & & \\
    
    -D_{(3k-3)m}t & & & & & & & 0 & -D_{\binom{k+1}{2}m}t \\
    
    -D_{(2k-1)m}t & \Cdots & & & & & & -D_{\binom{k+1}{2}m}t & 0 \\
    
    \end{pNiceArray}
\end{equation*}
where $m = ik+j-\binom{i+1}{2}$.
For $l = 1, \dots, \check{i}, \dots, k-1$ and $n = l+1, \dots, \check{j}, \dots, k$, 
$
[B^{(i,j)} + t \dot B^{(i,j)}]_{\{l,n\}} = -(D_{Lm}t)^2,
$
where $L = lk + n - \binom{l+1}{2}$.
We have that $k+1 = k+2- \binom{1+1}{2} \leq m \leq (k-1)k + k + \binom{(k-1)+1}{2} = \frac{k(k+1)}{2} = \binom{k+1}{2}$. Similarly $k + 1 \leq L \leq \binom{k+1}{2}$. In particular, $D_{Lm} = 0$. Therefore the off-diagonal entries of the bottom right $\binom{k}{2} \times \binom{k}{2}$ block of matrix $D$ are zero, that is, the matrix $D$ must be a diagonal matrix.

Now because
\begin{align*}
    [B^{(i,j)} + t \dot B^{(i,j)}]_{\{i,j\}} & = (1 -D_{ii}t)(1 -D_{jj}t) - (1-D_{mm}t)^2 \\ & = (-D_{ii}-D_{jj}+2D_{mm})t + (D_{ii}D_{jj}-D_{mm}^2)t^2, 
\end{align*}
and similarly 
\begin{align*}
    [A^{(i,j)} + t \dot A^{(i,j)}]_{\{i,j\}} & = (1 +D_{ii}t)(1 + D_{jj}t) - (1 + D_{mm}t)^2 \\ & = (D_{ii}+D_{jj}-2D_{mm})t + (D_{ii}D_{jj}-D_{mm}^2)t^2, 
\end{align*}
 by \Cref{lemma:diag-eq} it follows that $D_{ii} = D_{jj} = D_{mm}$. 
Thus $D = dI$.
\end{proof}

\section{Further preliminary results} \label{sec: prelim results}
In this section we include a few more preliminary results that will help us in the subsequent sections.

We remark that if $\Tilde{A}^{(i)}$ and $\Tilde{B}^{(j)}$ are obtained from a psd factorization consisting of matrices $A^{(i)}$ and $B^{(j)}$ via the $\operatorname{GL}(k)$-action then $\rank(\Tilde{A}^{(i)}) = \rank(A^{(i)})$ and
$\rank(\Tilde{B}^{(j)}) = \rank(B^{(j)})$. In particular, if we have a psd factorization with 
$\rank(A^{(i)}) = \rank(B^{(j)}) = 1$
for all $i\in [p]$ and $j \in [q]$ 
then the action of $\operatorname{GL}(k)$ 
will give an all rank-one psd factorization. 
In fact, if $A^{(i)} = a_ia_i^T$ and $B^{(j)} = b_jb_j^T$ then
$\Tilde{A}^{(i)} = \Tilde{a}_i\Tilde{a}_i^T$ and $\Tilde{B}^{(j)} = \Tilde{b}_j\Tilde{b}_j^T$
where $\Tilde{a}_i = S^Ta_i$ and $\Tilde{b}_j = S^{-1}b_j$.

\begin{proposition}\label{prop:inf-motion-rotation}
Let a size-$2$ psd factorization of $M$ be given by the matrices $A^{(1)}, \dots, A^{(p)}$,  $B^{(1)}, \dots, B^{(q)} \in \mathcal{S}_+^2$, and let $A^{(1)}(t), \dots, A^{(p)}(t), B^{(1)}(t), \dots, B^{(q)}(t) \in \mathcal{S}_+^k$ be an $s$-infinitesimal motion for $s=1$ or $s=2$. For any $S \in \operatorname{GL}(2)$, $\Tilde{A}^{(1)}(t), \dots, \Tilde{A}^{(p)}(t), \Tilde{B}^{(1)}(t), \dots, \Tilde{B}^{(q)}(t)$ is also an $s$-infinitesimal motion for the factorization given by the matrices 
$\Tilde{A}^{(1)}, \dots, \Tilde{A}^{(p)}, \Tilde{B}^{(1)}, \dots, \Tilde{B}^{(q)}$.
\end{proposition}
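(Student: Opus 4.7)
The plan is to verify the three conditions of \Cref{def:psd infinitesimal} for the rotated families $\tilde{A}^{(i)}(t):=S^TA^{(i)}(t)S$ and $\tilde{B}^{(j)}(t):=S^{-1}B^{(j)}(t)S^{-T}$. These matrices lie in $\mathcal{S}^2_+$ for $t\in[0,1]$ because conjugation by an invertible matrix preserves psd-ness.

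Conditions (1) and (2) are straightforward. Condition (1) follows from $\tilde{A}^{(i)}(0)=S^TA^{(i)}S=\tilde{A}^{(i)}$ and the analogous identity for $B$. For (2), note that $\dot{\tilde{A}}^{(i)}=S^T\dot{A}^{(i)}S$ and $\dot{\tilde{B}}^{(j)}=S^{-1}\dot{B}^{(j)}S^{-T}$, so cyclicity of the trace gives
\[
\langle\dot{\tilde{A}}^{(i)},\tilde{B}^{(j)}\rangle+\langle\tilde{A}^{(i)},\dot{\tilde{B}}^{(j)}\rangle=\tr(\dot{A}^{(i)}B^{(j)})+\tr(A^{(i)}\dot{B}^{(j)})=0.
\]

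The substantive step is condition (3). Setting $M(t):=A^{(i)}+t\dot{A}^{(i)}$, one has $\tilde{A}^{(i)}+t\dot{\tilde{A}}^{(i)}=S^TM(t)S$, and I would split by the size of $I\subseteq[2]$. For $I=\{1,2\}$, multiplicativity gives $\det(S^TM(t)S)=\det(S)^2\det M(t)$, and since $\det(S)^2>0$ the nonnegativity $T^s([M(t)]_{\{1,2\}})\geq 0$ transfers directly to $T^s([S^TM(t)S]_{\{1,2\}})\geq 0$. For a singleton $I=\{r\}$, the quantity $[S^TM(t)S]_{rr}=(Se_r)^TA^{(i)}(Se_r)+t\,(Se_r)^T\dot{A}^{(i)}(Se_r)$ is already linear in $t$, so its $s$-th Taylor approximation equals the expression itself. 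When $(Se_r)^TA^{(i)}(Se_r)>0$, positivity for small $t>0$ is automatic.

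The delicate case is $(Se_r)^TA^{(i)}(Se_r)=0$, which I expect to be the main obstacle, since it cannot be extracted from condition (3) of the original motion alone. Here I would instead appeal to the full curve $A^{(i)}(t)$: the scalar function $\phi(t):=(Se_r)^TA^{(i)}(t)(Se_r)$ is nonnegative on $[0,1]$ because $A^{(i)}(t)\in\mathcal{S}^2_+$, and vanishes at $t=0$, so the one-sided derivative satisfies $\phi'(0)=(Se_r)^T\dot{A}^{(i)}(Se_r)\geq 0$. This is exactly the coefficient of $t$ in the linear expression, so the whole expression is nonnegative for small $t\geq 0$. The argument for $\tilde{B}^{(j)}$ is identical with $S$ replaced by $S^{-T}$, completing the verification.
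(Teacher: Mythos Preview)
Your proof is correct. Conditions (1), (2), and the $I=\{1,2\}$ case of (3) are handled exactly as in the paper via cyclicity of the trace and multiplicativity of the determinant. For the singleton case $I=\{r\}$, however, your route differs genuinely from the paper's.

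The paper asserts the identity
\[
T^1\bigl([S^T(A^{(i)}+t\dot A^{(i)})S]_{\{r\}}\bigr)=T^1\bigl([A^{(i)}+t\dot A^{(i)}]_{\{r\}}\bigr)\cdot(s_{1r}+s_{2r})^2
\]
and transfers nonnegativity through the positive scalar factor. As written this identity does not hold in general: the $(r,r)$ entry of $S^TMS$ is the full quadratic form $(Se_r)^TM(Se_r)$, not a scalar multiple of $M_{rr}$. Your alternative---invoking the psd-ness of the entire curve $A^{(i)}(t)$ to conclude that $\phi(t)=(Se_r)^TA^{(i)}(t)(Se_r)\ge 0$ with $\phi(0)=0$, hence $\phi'(0)=(Se_r)^T\dot A^{(i)}(Se_r)\ge 0$---is both clean and correct. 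It uses more of \Cref{def:psd infinitesimal} than the Taylor inequalities in condition (3) alone, namely that the curves themselves lie in $\mathcal{S}^2_+$; your intuition that this extra input is needed is essentially right. One can alternatively note, in the most relevant case $\rank A^{(i)}=1$, that $(Se_r)^T\dot A^{(i)}(Se_r)$ with $Se_r\in\ker A^{(i)}$ equals a positive multiple of the linear coefficient of $\det(A^{(i)}+t\dot A^{(i)})$, which condition (3) already controls---but your curve argument handles all ranks uniformly without this computation.
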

\begin{proof}

Clearly $\Tilde{A}^{(i)}(0) = S^TA^{(i)}(0)S = \Tilde{A}^{i}$ and 
similarly $\Tilde{B}^{(j)}(0) = \Tilde{B}^{(j)}$. Furthermore,
$$\dot {\Tilde{A}}^{(i)} = \frac{d \Tilde{A}^{(i)}(t)}{dt}(0) = S^T\frac{d A^{(i)}(t)}{dt}(0)S = S^T \dot A^{(i)} S,$$
which follows from the linearity 
of differentiation. Similarly, 
$\dot {\Tilde{B}}^{(j)} = S^{-1} \dot B^{(j)} S^{-T}$. Now $\tr(\dot{\Tilde A }^{(i)} \Tilde{B}^{(j)}) = \tr(S^T \dot A^{(i)} B^{(j)}S^{-T}) = \tr(\dot A^{(i)} B^{(j)})$, and similarly
$\tr(\Tilde{A}^{(i)} \dot{\Tilde{B}}^{(j)}) = \tr(A^{(i)} \dot{B}^{(j)})$. Therefore
$$ \langle \dot{\Tilde{A}}^{(i)},\Tilde{B}^{(j)}  \rangle + \langle \Tilde{A}^{(i)},\dot{\Tilde{B}}^{(j)}  \rangle = \langle \dot{A}^{(i)},B^{(j)}  \rangle + \langle A^{(i)},\dot{B}^{(j)}  \rangle = 0.$$
When $s=1$, 
\begin{align*}
T^1([S^T (A^{(i)} + t\dot{A}^{(i)})S]_{\{i\}}) & = T^1([A^{(i)} + t\dot{A}^{(i)}]_{\{i\}})\cdot (s_{1i}+s_{2i})^2,\\
T^1([S^T (A^{(i)} + t\dot{A}^{(i)})S]_{\{1,2\}}) & = T^1([A^{(i)} + t\dot{A}^{(i)}]_{\{1,2\}})\cdot (s_{11}s_{22}-s_{12}s_{21})^2.
\end{align*}
Thus if $T^1([A^{(i)} + t\dot{A}^{(i)}]_{I}) \geq 0$, then $T^1([S^T (A^{(i)} + t\dot{A}^{(i)})S]_I) \geq 0$. Similarly, if $T^1([B^{(j)} + t\dot{B}^{(j)}]_{J}) \geq 0$, then $T^1([S^T (B^{(j)} + t\dot{B}^{(j)})S]_J) \geq 0$.\\
When $s=2$, $A^{(i)} + t\dot{A}^{(i)}$ and $B^{(j)} + t\dot{B}^{(j)}$ are psd for $t \in [0, \varepsilon)$. Therefore $S^T (A^{(i)} + t\dot{A}^{(i)})S = \Tilde{A}^{(i)} + t \dot{\Tilde{A}}^{(i)}$ is also psd as well as
$\Tilde{B}^{(j)} + t \dot{\Tilde{B}}^{(j)} \in \mathcal{S}_+^2$.  
\end{proof}

\begin{corollary} \label{cor:bijection_between_inf_motions}
Let a size-$2$ positive semidefinite factorization of $M$ be given by the matrices $A^{(1)}, \dots, A^{(p)},\\ B^{(1)}, \dots, B^{(q)} \in \mathcal{S}_+^2$. For any $S \in \operatorname{GL}(2)$,
the set of all $s$-infinitesimal motions of this factorization
and
the set of all $s$-infinitesimal motions of 
$(\Tilde{A}^{(1)}, \dots, \Tilde{A}^{(p)}, \Tilde{B}^{(1)}, \dots, \Tilde{B}^{(q)})$ are linearly isomorphic for
$s=1,2$. This linear isomorphism restricts to the identity isomorphism on $2$-trivial motions. 
\end{corollary}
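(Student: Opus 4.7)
The plan is to construct a map $\Phi$ sending an $s$-infinitesimal motion $(A^{(1)}(t),\dots,B^{(q)}(t))$ of the original factorization to $(\tilde A^{(1)}(t),\dots,\tilde B^{(q)}(t))$, where $\tilde A^{(i)}(t) := S^T A^{(i)}(t) S$ and $\tilde B^{(j)}(t) := S^{-1} B^{(j)}(t) S^{-T}$. Proposition~\ref{prop:inf-motion-rotation} already confirms that the image is an $s$-infinitesimal motion of the $\operatorname{GL}(2)$-rotated factorization for $s \in \{1,2\}$, so $\Phi$ is well-defined. To produce an inverse, I would apply the same proposition once more, this time with $S^{-1}$ in place of $S$, obtaining a map $\Psi$ in the opposite direction; since $SS^{-1}=I$, a direct computation gives $\Phi\circ\Psi = \mathrm{id}$ and $\Psi\circ\Phi = \mathrm{id}$. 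Linearity of $\Phi$ is then automatic, because on the first-order data $(\dot A^{(i)}, \dot B^{(j)})$ the map acts by the linear operation $(\dot A^{(i)},\dot B^{(j)}) \mapsto (S^T \dot A^{(i)} S,\, S^{-1} \dot B^{(j)} S^{-T})$, and the higher-order data transform in the same way.

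It remains to verify that $\Phi$ restricts to the identity on $2$-trivial motions. For this I would appeal to Theorem~\ref{thm:trivial_general_case}, which says that every $2$-trivial motion of a size-$2$ psd factorization has the explicit one-parameter form $A^{(i)}(t) = (1+td)A^{(i)}$, $B^{(j)}(t) = (1+td)^{-1}B^{(j)}$ for some scalar $d$ (as in Example~\ref{ex: trivial motions}). Applying $\Phi$ to such a motion yields $\tilde A^{(i)}(t) = (1+td)\tilde A^{(i)}$ and $\tilde B^{(j)}(t) = (1+td)^{-1}\tilde B^{(j)}$, which is precisely the $2$-trivial motion of $(\tilde A^{(1)},\dots,\tilde B^{(q)})$ with the same scalar $d$. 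Parametrizing the $2$-trivial motions of both factorizations by $d \in \mathbb{R}$ via this scaling formula, the map $\Phi$ is the identity on this parameter.

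I do not expect a significant obstacle: given Proposition~\ref{prop:inf-motion-rotation} and the explicit description of $2$-trivial motions in Example~\ref{ex: trivial motions}, the corollary reduces to routine bookkeeping. The only conceptual point to settle is which canonical identifications of the $2$-trivial motion spaces make ``identity isomorphism'' meaningful; the natural identification via the scalar $d$ appearing in the scaling formula makes the statement work cleanly, and the same computation would go through unchanged for any size $k$ as long as $k$-trivial motions remain of the scalar scaling form.
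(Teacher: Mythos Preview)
Your proposal is correct and follows essentially the same approach as the paper: use Proposition~\ref{prop:inf-motion-rotation} in both directions to get a bijection, and invoke the explicit scalar form of $2$-trivial motions from Theorem~\ref{thm:trivial_general_case} to see that they are fixed. The paper's write-up differs only in that it works at the level of the $3\times 3$ matrices $D$ encoding $\dot{\mathcal A}=\mathcal A D$, $\dot{\mathcal B}=-D\mathcal B$, and records the explicit formula $D\mapsto \mathcal S^{-1}D\mathcal S$ for the isomorphism (where $\mathcal S$ is the $3\times 3$ matrix induced by $S$); the identity on $2$-trivial motions then follows because $dI_3$ commutes with $\mathcal S$.
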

\begin{proof} 
The matrix $S$ induces an invertible $3\times 3$ matrix 
$\mathcal{S}$ such that 
$\Tilde{\mathcal{A}} = \mathcal{A} \mathcal{S}$ and
$\Tilde{\mathcal{B}} = \mathcal{S}^{-1}\mathcal{B}$. 
If $D$ gives an $s$-infinitesimal motion, then 
$\dot{\mathcal{A}} = \mathcal{A} D$ and $\dot{\mathcal{B}} = -D \mathcal{B}$. In the proof of \Cref{prop:inf-motion-rotation} we saw that $\dot{\Tilde{\mathcal{A}}} = \dot{\mathcal{A}} \mathcal{S}$ and 
$\dot{\Tilde{\mathcal{B}}} = \mathcal{S}^{-1} \dot{\mathcal{B}}$. These give us
$\dot{\Tilde{\mathcal{A}}} = \mathcal{A}D\mathcal{S}= \mathcal{A} \mathcal{S} (\mathcal{S}^{-1}D\mathcal{S}) = \Tilde{\mathcal{A}}(\mathcal{S}^{-1}D\mathcal{S})$
and \\
$\dot{\Tilde{\mathcal{B}}} = -\mathcal{S}^{-1}D\mathcal{B}=  -(\mathcal{S}^{-1}D\mathcal{S})\mathcal{S}^{-1}\mathcal{B} = -(\mathcal{S}^{-1}D\mathcal{S}) \Tilde{B}$. The claimed linear isomorphism is given by $D \mapsto \mathcal{S}^{-1}D\mathcal{S}$. By \Cref{thm:trivial_general_case}, $2$-trivial motions are given by $D = dI_3$, and these commute with $\mathcal{S}$. This proves the last statement.  
\end{proof}

The next result will be useful when 
we analyze size-$2$ psd factorizations of matrices in $\M_{3,2}^{p\times q}$. 
We note that in this situation
$$\det(A^{(i)} + t {\dot A}^{(i)} ) = \det(A^{(i)}) + t \alpha_i^A(D) + t^2 \beta_i^A(D)$$
where $\alpha_i^A(D)$ is a linear function of $D$ and $\beta_i^A(D)$ is 
a quadratic function of $D$. In particular, when $\rank(A^{i}) \leq 1$ 
then the same determinant simplifies
to $\det(A^{(i)} + t {\dot A}^{(i)} ) = t \alpha_i^A(D) + t^2 \beta_i^A(D)$.

\begin{lemma}\label{prop:zero-alpha-forces-beta}
Let $M \in \M_{3,2}^{p\times q}$ and let 
 $A^{(i)}\in \mathcal{S}^2_+$ be a factor of a size-$2$ psd factorization of $M$
 where $\rank(A^{(i)}) = 1$. 
 If  $D \in \R^{3 \times 3}$ gives a $2$-infinitesimal motion such that $\alpha_i^A(D)=0$, then $\beta_i^A(D)\leq 0$.
\end{lemma}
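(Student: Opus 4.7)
The plan is to derive explicit formulas for $\alpha_i^A(D)$ and $\beta_i^A(D)$ by directly expanding the determinant, and then to interpret the vanishing of $\alpha_i^A(D)$ as an isotropy condition on $\dot A^{(i)}$. First I would parametrize the rank-one factor as $A^{(i)}=a_i a_i^{T}$ with $a_i=(a_{i,1},a_{i,2})^{T}\in\R^{2}\setminus\{0\}$, and expand $\det(A^{(i)}+t\dot A^{(i)})$ to read off the two coefficients. The constant term is $\det A^{(i)}=0$ since $A^{(i)}$ has rank one, so only the $t$ and $t^{2}$ terms remain, corresponding to $\alpha_i^A(D)$ and $\beta_i^A(D)$ respectively.

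The crucial step is to recognize that the $t$-coefficient can be rewritten as $(a_i^{\perp})^{T}\dot A^{(i)}\,a_i^{\perp}$, where $a_i^{\perp}:=(-a_{i,2},a_{i,1})^{T}$ is a nonzero vector orthogonal to $a_i$, while the $t^{2}$-coefficient is simply $\det\dot A^{(i)}$. (Equivalently, this is the expansion $\det(A+t\dot A)=\det A+t\,\tr(\mathrm{adj}(A)\dot A)+t^{2}\det\dot A$ combined with the identity $\mathrm{adj}(aa^{T})=a^{\perp}(a^{\perp})^{T}$.) Under the assumption $\alpha_i^A(D)=0$, the nonzero vector $a_i^{\perp}$ is then isotropic for the quadratic form $v\mapsto v^{T}\dot A^{(i)}v$. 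I would then invoke the elementary fact that a real symmetric $2\times 2$ matrix admitting a nonzero isotropic vector cannot be positive- or negative-definite, so its two real eigenvalues have nonpositive product, giving $\det\dot A^{(i)}\leq 0$, i.e., $\beta_i^A(D)\leq 0$.

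The argument is short and I do not foresee a real obstacle — the only point with any content is noticing that the linear coefficient of the determinant of a small perturbation of a rank-one symmetric $2\times 2$ matrix is exactly the quadratic form of the perturbation evaluated at the perpendicular direction. I note that the $2$-infinitesimal motion hypothesis enters only through the existence of $\dot A^{(i)}$; the inequality $\beta_i^A(D)\leq 0$ follows from the rank-one assumption and $\alpha_i^A(D)=0$ alone. Combined with the PSD constraint $A^{(i)}+t\dot A^{(i)}\in\mathcal{S}_+^{2}$ embedded in the definition of a $2$-infinitesimal motion, which forces $t^{2}\beta_i^A(D)\geq 0$ for small $t>0$, one actually obtains equality $\beta_i^A(D)=0$; the lemma records only the upper bound presumably because that is all that is needed downstream.
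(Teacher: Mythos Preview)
Your argument is correct and considerably more transparent than the paper's own proof. Both proofs reach the same conclusion, but by very different routes.

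The paper proceeds computationally: it uses the $\operatorname{GL}(2)$-action to assume $a_{i1},a_{i2}\neq 0$, writes down an explicit basis of the $8$-dimensional hyperplane $\{\alpha_i^A(D)=0\}\subset\R^9$, and then computes $\beta_i^A$ on that hyperplane as a quadratic form in the eight free parameters. The resulting $8\times 8$ symmetric matrix is shown (by computer algebra) to have rank one with a single negative eigenvalue, hence to be negative semidefinite. Your approach bypasses all of this by exploiting the $2\times 2$ identity
\[
\det(A^{(i)}+t\dot A^{(i)})=\det A^{(i)}+t\cdot (a_i^{\perp})^{T}\dot A^{(i)}\,a_i^{\perp}+t^{2}\det\dot A^{(i)},
\]
valid because $\mathrm{adj}(a_ia_i^{T})=a_i^{\perp}(a_i^{\perp})^{T}$. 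The hypothesis $\alpha_i^A(D)=0$ then says the nonzero vector $a_i^{\perp}$ is isotropic for the symmetric form $\dot A^{(i)}$, and the elementary observation that a real symmetric $2\times 2$ matrix with a nonzero isotropic vector cannot be definite forces $\beta_i^A(D)=\det\dot A^{(i)}\le 0$.

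What each buys: the paper's approach, while opaque, produces the explicit rank-one structure of $\beta_i^A$ on the hyperplane, which could in principle be mined for more refined information; your approach is coordinate-free, requires no genericity reduction on $a_{i1},a_{i2}$, and makes the mechanism (isotropy $\Rightarrow$ indefiniteness $\Rightarrow$ nonpositive determinant) completely transparent. Your closing remark that the $2$-infinitesimal hypothesis then forces $\beta_i^A(D)=0$ is also correct and worth recording, even if the lemma as stated only needs the inequality.
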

\begin{proof}
    Let $A^{(i)} = a_ia_i^T$ where $a_i = (a_{i1},a_{i2})$. 
    By using Proposition \ref{prop:inf-motion-rotation} with an appropriate $S \in \operatorname{GL}(2)$  we can assume that $a_{i1} \neq 0$ and $a_{i2} \neq 0$. We note that in this case 
    $$ \det(\tilde{A}^{(i)} + t \tilde{\dot A}^{(i)} ) = \det(S^T(A^{(i)} + t {\dot A}^{(i)})S) = \det(S)^2 \det(A^{(i)} + t {\dot A}^{(i)}).$$ This means that ${\tilde \alpha}_i^A$ and ${\tilde \beta}_i^B$ have the same sign as $\alpha_i^A$ and $\beta_i^A$, respectively. Hence our assumption is justified. 
    Consider the $8$-dimensional subspace defined by the equation $\alpha_i^A(D)=0$. A basis for this subspace is given by 
    \begin{align*}
    &\left(2,0,0,0,0,0,0,0,1\right), 
        \left(-\frac{\sqrt{2} a_{i1}}{a_{i2}},0,0,0,0,0,0,1,0\right), 
        \left(-\frac{\sqrt{2} a_{i2}}{a_{i1}},0,0,0,0,0,1,0,0\right),\\
     &   \left(\frac{\sqrt{2} a_{i2}}{a_{i1}},0,0,0,0,1,0,0,0\right),
    \left(-1,0,0,0,1,0,0,0,0\right),
        \left(-\frac{a_{i2}^2}{a_{i1}^2},0,0,1,0,0,0,0,0\right),\\
      & \left(\frac{\sqrt{2} a_{i1}}{a_{i2}},0,1,0,0,0,0,0,0\right),
        \left(-\frac{a_{i1}^2}{a_{i2}^2},1,0,0,0,0,0,0,0\right).
        \end{align*}
    Now if $D$ corresponds to a $2$-infinitesimal motion where $\alpha_i^A(D)=0$, then $\underline{D}$ is a linear combination of these vectors, 
    \[\underline{D} = \left(-\frac{s_8 a_{i1}^2}{a_{i2}^2}+\frac{\sqrt{2} s_7 a_{i1}}{a_{i2}}-\frac{\sqrt{2} s_2 a_{i1}}{a_{i2}}-\frac{s_6 a_{i2}^2}{a_{i1}^2}-\frac{\sqrt{2} s_3 a_{i2}}{a_{i1}}+\frac{\sqrt{2} s_4 a_{i2}}{a_{i1}}+2 s_1-s_5,s_8,s_7,s_6,s_5,s_4,s_3,s_2,s_1\right)\] where $s_1,\ldots,s_8 \in \R$. In this case, $\beta_i^A(D)$ is a quadratic form in $s_1, \ldots, s_8$ represented by the symmetric matrix 
    \[\left(
\begin{array}{cccccccc}
 -a_{i1}^2 a_{i2}^2 & \sqrt{2} a_{i1}^3 a_{i2} & 0 & -\frac{a_{i1} a_{i2}^3}{\sqrt{2}} & a_{i1}^2 a_{i2}^2 & 0 & -\frac{a_{i1}^3 a_{i2}}{\sqrt{2}} & a_{i1}^4 \\
 \sqrt{2} a_{i1}^3 a_{i2} & -2 a_{i1}^4 & 0 & a_{i1}^2 a_{i2}^2 & -\sqrt{2} a_{i1}^3 a_{i2} & 0 & a_{i1}^4 & -\frac{\sqrt{2} a_{i1}^5}{a_{i2}} \\
 0 & 0 & 0 & 0 & 0 & 0 & 0 & 0 \\
 -\frac{a_{i1} a_{i2}^3}{\sqrt{2}} & a_{i1}^2 a_{i2}^2 & 0 & -\frac{a_{i2}^4}{2} & \frac{a_{i1} a_{i2}^3}{\sqrt{2}} & 0 & -\frac{1}{2} a_{i1}^2 a_{i2}^2 & \frac{a_{i1}^3 a_{i2}}{\sqrt{2}} \\
 a_{i1}^2 a_{i2}^2 & -\sqrt{2} a_{i1}^3 a_{i2} & 0 & \frac{a_{i1} a_{i2}^3}{\sqrt{2}} & -a_{i1}^2 a_{i2}^2 & 0 & \frac{a_{i1}^3 a_{i2}}{\sqrt{2}} & -a_{i1}^4 \\
 0 & 0 & 0 & 0 & 0 & 0 & 0 & 0 \\
 -\frac{a_{i1}^3 a_{i2}}{\sqrt{2}} & a_{i1}^4 & 0 & -\frac{1}{2} a_{i1}^2 a_{i2}^2 & \frac{a_{i1}^3 a_{i2}}{\sqrt{2}} & 0 & -\frac{a_{i1}^4}{2} & \frac{a_{i1}^5}{\sqrt{2} a_{i2}} \\
 a_{i1}^4 & -\frac{\sqrt{2} a_{i1}^5}{a_{i2}} & 0 & \frac{a_{i1}^3 a_{i2}}{\sqrt{2}} & -a_{i1}^4 & 0 & \frac{a_{i1}^5}{\sqrt{2} a_{i2}} & -\frac{a_{i1}^6}{a_{i2}^2} \\
\end{array}
\right).\]
This matrix has rank one  with a single non-zero eigenvalue $-\frac{\left(a_{i1}^2+a_{i2}^2\right)^2 \left(2 a_{i1}^2+a_{i2}^2\right)}{2 a_{i2}^2} <0$. Therefore it  is negative semidefinite, and $\beta_i^A(D) \leq 0$ for any  $D$ such that $\alpha_i^A(D)=0$.
\end{proof}

\begin{lemma} \label{lem:rank-2-factors-do-not-matter}
Let $M \in \M_{3,2}^{p \times q}$, and let a size-$2$ psd factorization of $M$ be given by the factors $A^{(i)} \in \mathcal{S}_+^2, \, i\in [p]$ and $B^{(j)} \in \mathcal{S}_+^2, \, j \in [q]$. Let $A^{(i_1)},\ldots,A^{(i_s)},B^{(j_1)},\ldots,B^{(j_t)}$ be the factors with rank one in this psd factorization. Then the original psd factorization is $s$-infinitesimally rigid if and only if the psd factorization given by $A^{(i_1)},\ldots,A^{(i_s)},B^{(j_1)},\ldots,B^{(j_t)}$is $s$-infinitesimally rigid. 
\end{lemma}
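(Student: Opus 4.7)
The plan is to use \Cref{rmk: only zeros matter} to show that rank-$2$ factors impose no binding constraints on $s$-infinitesimal motions, so they cannot influence rigidity.

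First, I would recall the $D$-parametrization from the discussion after \eqref{eqn: calA and calB}: since $M \in \M_{3,2}^{p\times q}$ has rank $3$, every $s$-infinitesimal motion corresponds to a single $3\times 3$ matrix $D$ with $\dot{\mathcal{A}} = \mathcal{A}D$ and $\dot{\mathcal{B}} = -D\mathcal{B}$, which automatically yields the inner-product condition \Cref{def:psd infinitesimal}(2) as the matrix identity $\dot{\mathcal{A}}\mathcal{B} + \mathcal{A}\dot{\mathcal{B}} = 0$. The same $D$ simultaneously encodes $s$-infinitesimal motions of the full factorization and of the rank-one sub-factorization (using the submatrices of $\mathcal{A}$ and $\mathcal{B}$ indexed by $i_1,\ldots,i_s$ and $j_1,\ldots,j_t$).

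Next, I would observe that any rank-$2$ factor $A^{(i)} \in \mathcal{S}^2_+$ is positive definite, so every principal minor $[A^{(i)}]_I$ is strictly positive; by \Cref{rmk: only zeros matter}, the inequality $T^s([A^{(i)} + t\dot A^{(i)}]_I) \geq 0$ then holds automatically for small $t>0$, regardless of $\dot A^{(i)}$. The same conclusion holds for rank-$2$ factors on the $\mathcal{B}$-side. Therefore the only binding inequalities in condition (3) of \Cref{def:psd infinitesimal} are those coming from the rank-one factors, so the full factorization and the sub-factorization admit exactly the same set of $D$-parametrized $s$-infinitesimal motions.

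Finally, I would match the $s$-trivial motions. By \Cref{thm:1-trivial} and \Cref{thm:trivial_general_case}, the $s$-trivial motions of any size-$2$ psd factorization in $\M_{3,2}$ are the universal $D$-subspaces $L_{(I,I)}\mathcal{M}_{\operatorname{GL}(2)}$ (for $s=1$) and $\{dI : d\in \R\}$ (for $s=2$), both arising from the $\operatorname{GL}(2)$-action and described purely in terms of $D$. These same subspaces serve as the $s$-trivial motions of the sub-factorization, so $s$-infinitesimal rigidity — the equality of the motion set with the trivial-motion set — transfers. The main subtlety to nail down is the universal $D$-subspace description of $s$-trivial motions for the sub-factorization, which follows directly from the $\operatorname{GL}(2)$-action underlying the cited theorems, since that action acts identically on any size-$2$ psd factorization.
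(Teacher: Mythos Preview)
Your proposal is correct and follows essentially the same approach as the paper: both arguments hinge on the observation that rank-$2$ factors are positive definite and hence (via \Cref{rmk: only zeros matter}) impose no constraints in condition~(3) of \Cref{def:psd infinitesimal}, so the set of $D$'s giving $s$-infinitesimal motions is unchanged when rank-$2$ factors are dropped. The paper's write-up is terser—invoking \Cref{lem:sub-facorization-rigid} for one direction and positive definiteness for the other—while you spell out the $D$-parametrization and the matching of trivial-motion subspaces more explicitly, but the content is the same.
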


\begin{proof}
If the psd factorization given by the factors $A^{(i_1)},\ldots,A^{(i_s)},B^{(j_1)},\ldots,B^{(j_t)}$ is $s$-infinitesimally rigid, then the original factorization is $s$-infinitesimally rigid by~\Cref{lem:sub-facorization-rigid}. Conversely, assume that the factorization given by these factors is not $s$-infinitesimally rigid. We claim that the remaining factors of rank two do not introduce any further constraints on matrices $D$ representing $s$-infinitesimal motions: If $A^{(i)}$ has rank two, then it is positive definite. We can choose $t>0$ sufficiently small such that $A^{(i)} + t \dot{A}^{(i)}$ is also positive definite and hence $T^s([A^{(i)} + t \dot{A}^{(i)}]_I) \geq 0$.
\end{proof}

\section{$s$-Infinitesimal rigidity in $\M_{3,2}^{p\times q}$} \label{sec:infinitesimal_rigidity}

In the rest of the paper, we will consider 
matrices $M \in \R_+^{p \times q}$ with 
rank $3$ and psd rank $2$.  In this section, we first characterize $1$- and $2$-infinitesimally rigid size-2 psd factorizations for matrices with positive entries (\Cref{section:no_orthogonal_pairs}). 
Then we characterize $2$-infinitesimally rigid size-2 psd factorizations if we allow zeros in $M$ (\Cref{section:one_orthogonal_pair} and \Cref{section:two_orthogonal_pairs}). The main results for the corresponding cases are \Cref{thm:no-orthogonal-pairs}, \Cref{thm:one-orthogonal-pair}, and \Cref{thm:two-orth-inf-rigid}. 

The reason why we do not consider $1$-infinitesimal rigidity for matrices with zeros is that being $1$-infinitesimally rigid is very restrictive in the presence of zeros. For example, it can been seen from a dimension count that a $3 \times 3$ matrix with a zero cannot have a $1$-infinitesimally rigid size-$2$ psd factorization. Nevertheless such matrices can have a unique size-$2$ psd factorization up to $\text{GL}(2)$-action, which is the case for the derangement matrix
\[
\begin{pmatrix}
0 & 1 & 1\\
1 & 0 & 1\\
1 & 1 & 0
\end{pmatrix}
\]
as discussed in~\cite[Example 11]{Fawzi:2015}. The unique size-$2$ psd factorization of this matrix is however $2$-infinitesimally rigid.

In our characterization, the ranks of the matrices in a psd factorization will play a significant role. In particular, we will pay close attention to the subset of the factors that have rank one instead of the full rank two. 
If $A^{(1)}$, $\dots$, $A^{(\bar{p})}$, $B^{(1)}$, $\dots$, $B^{(\bar{q})} \in \mathcal{S}^2_+$ are matrices of rank one, then $A^{(i)} = a_i a_i^T$ and $B^{(j)} = b_j b_j^T$ for $a_i, b_j \in \R^2$. Besides the rank of these matrices, the pairwise linear independence and orthogonality of these factors also play an important role for deriving our results. We note that the linear independence of $a_i,a_j$ is equivalent to $\det(a_i,a_j) \neq 0$, the linear independence of $b_i,b_j$ is equivalent to $\det(b_i,b_j) \neq 0$, and the non-orthogonality of $a_i,b_j$ is equivalent to $\langle a_i,b_j \rangle \neq 0$. Furthermore, $\langle A^{(i)}, B^{(j)} \rangle = (a_i^Tb_j)^2$,  and hence $\langle A^{(i)}, B^{(j)} \rangle \neq 0$ if and only if $\langle a_i, b_j \rangle \neq 0$.

\begin{remark}\label{rmk: rank-1 factors determinant form}
    Let $A^{(i)}$ and $B^{(j)}$ be factors of rank one of a size-2 psd factorization as above, and let $D$ be a matrix corresponding to a $2$-infinitesimal motion.
    As we observed in Section \ref{sec: prelim results} 
    $$
        \det(A^{(i)} + t \dot A^{(i)})  = t\alpha_i^A(D)  + t^2\beta_i^A(D) \mbox{  and   } 
        \det (B^{(j)} + t \dot B^{(j)}) = t\alpha_j^B(D) + t^2\beta_j^B(D).
    $$
    We note that  $T^1([A^{(i)} + t \dot{A}^{(i)}]_{\{1,2\}}) = t\alpha_i^A(D)$ and $T^2([A^{(i)} + t \dot{A}^{(i)}]_{\{1,2\}}) =t\alpha_i^A(D)  + t^2\beta_i^A(D)$. Furthermore, when $S \in \operatorname{GL}(2)$ acts on the factorization, our computations in \cref{prop:inf-motion-rotation} imply that $\tilde{\alpha}_i^A$ and $\tilde{\alpha}_j^B$ are positive multiples of $\alpha_i^A$ and $\alpha_j^B$, respectively.
\end{remark}

\subsection{No orthogonal pairs} \label{section:no_orthogonal_pairs}

This subsection is devoted to the case where no factor  $A^{(i)}$ of rank one is orthogonal to a factor  $B^{(j)}$ of rank one. This means that $M$ has only positive entries. 

\begin{lemma} \label{lem:matrix-C}
Let $A^{(1)}$, $\dots$, $A^{(\bar{p})}$, $B^{(1)}$, $\dots$, $B^{(\bar{q})} \in \mathcal{S}^2_+$ be matrices of rank one, where $A^{(i)} = a_i a_i^T$ and $B^{(j)} = b_j b_j^T$ for $a_i, b_j \in \R^2$. Define
    \begin{align*}
        \alpha_i^A & := (a_{i1}^2 a_{i2}^2, a_{i1}^4, -\sqrt{2} a_{i1}^3 a_{i2}, a_{i2}^4, a_{i1}^2 a_{i2}^2, -\sqrt{2} a_{i1} a_{i2}^3, \sqrt{2} a_{i1} a_{i2}^3, \sqrt{2} a_{i1}^3 a_{i2}, -2 a_{i1}^2 a_{i2}^2), \\
        \alpha_j^B & := (-b_{j1}^2 b_{j2}^2, -b_{j2}^4, -\sqrt{2} b_{j1} b_{j2}^3, -b_{j1}^4, -b_{j1}^2 b_{j2}^2, -\sqrt{2} b_{j1}^3 b_{j2}, \sqrt{2} b_{j1}^3 b_{j2}, \sqrt{2} b_{j1} b_{j2}^3, 2 b_{j1}^2 b_{j2}^2). \\
    \end{align*}
Consider the $(\bar{p}+\bar{q}) \times 9$ matrix
\begin{align} \label{eqn:matrix-C}
    C_{(\mathcal{A}, \mathcal{B})} := \begin{pmatrix}
        \rowvec{\alpha_{1}^A} \\
        \vdots \\
        \rowvec{\alpha_{\bar{p}}^A}\\
        \rowvec{\alpha_{1}^B}\\
        \vdots \\
        \rowvec{\alpha_{\bar{q}}^B}
    \end{pmatrix}.
\end{align}
\begin{enumerate}
\item The rank of $C_{(\mathcal{A},\mathcal{B})}$ is at most five. 

\item The right kernel of $C_{(\mathcal{A},\mathcal{B})}$ contains the subspace $L := \operatorname{span}(v_1, v_2, v_3, v_4)$, where
\begin{equation*}
\begin{split}
    v_1 &:= (2, 0, 0, 0, 0, 0, 0, 0, 1), \\
    v_2 &:= (0, 0, 0, 0, 2, 0, 0, 0, 1), \\
    v_3 &:= (0, 0, 1, 0, 0, 0, 0, 1, 0), \\
    v_4 &:= (0, 0, 0, 0, 0, 1, 1, 0, 0).
\end{split}
\end{equation*}
If $\rank(C_{(\mathcal{A},\mathcal{B})}) = 5$, then the right kernel is equal to $L$.  

\item If $\bar{p}+\bar{q}=6$, then the left kernel of $C_{(\mathcal{A},\mathcal{B})}$ contains the vector $v \in \R^{\bar{p}+\bar{q}}$ with entries 
\begin{equation} \label{eqn:left-kernel-C}
    v_k := 
    (-1)^{k + \mathbbm{1}_{k \geq \bar{p}+1}} \prod_{\substack{1 \leq i < j \leq \bar{p},\\a_i,a_j \neq c_k}} \det(a_i,a_j) \prod_{\substack{1 \leq i < j \leq \bar{q},\\b_i,b_j \neq c_k}} \det(b_i,b_j) \prod_{\substack{1 \leq i \leq \bar{p},\\ 1 \leq j \leq \bar{q},\\a_i,b_j \neq c_k}} \langle a_i, b_j \rangle, 
\end{equation}
where $c_1=a_1,\ldots,c_{\bar{p}}=a_{\bar{p}},c_{\bar{p}+1}=b_1,\ldots,c_{\bar{p}+\bar{q}}=b_{\bar{q}}$.
If $\rank(C_{(\mathcal{A},\mathcal{B})}) = 5$, then the left kernel is spanned by $v$. 

\item If all $\det(a_i,a_j) \neq 0, \det(b_i,b_j) \neq 0$ and $\langle a_i,b_j \rangle \neq 0$, then $\rank(C_{\mathcal{A}, \mathcal{B}}) = 5$.
\end{enumerate}
\end{lemma}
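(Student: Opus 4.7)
For parts (1) and (2), I plan to verify directly that $v_1, v_2, v_3, v_4$ lie in the right kernel of $C_{(\mathcal{A}, \mathcal{B})}$ by computing the dot products $\alpha_i^A \cdot v_k$ and $\alpha_j^B \cdot v_k$; each is a short two-term cancellation, e.g.\ $\alpha_i^A \cdot v_1 = 2 a_{i1}^2 a_{i2}^2 + (-2 a_{i1}^2 a_{i2}^2) = 0$. A conceptual way to see this is that $v_1, \dots, v_4$ span the same $4$-dimensional subspace as the vectorizations of the matrices $D_1, \dots, D_4$ in~\eqref{eqn:1-trivial-motions-for-size-2-psd-factorizations}; by \Cref{thm:1-trivial} these give $1$-trivial motions of every size-$2$ psd factorization, forcing $\alpha_i^A(D_k) = \alpha_j^B(D_k) = 0$ whenever the corresponding factor has rank one. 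Linear independence of $v_1, \dots, v_4$ is immediate, so $\dim L = 4$ and hence $\rank(C_{(\mathcal{A}, \mathcal{B})}) \leq 5$, which is part (1); when equality holds in part (2) the right kernel is exactly $L$.

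For part (3), the key task is the polynomial identity $\sum_{k=1}^{6} v_k \cdot (\text{$k$-th row of } C_{(\mathcal{A}, \mathcal{B})}) = 0 \in \mathbb{R}^9$, i.e., nine scalar identities in the coordinates of $a_1, \dots, a_{\bar p}$ and $b_1, \dots, b_{\bar q}$. My plan is an interpolation argument: fix all variables except $a_1 \in \mathbb{R}^2$ and view each coordinate of the sum as a polynomial in $a_1$. The summand for $k = 1$ is quartic in $a_1$, and for $k \neq 1$ the factor $v_k$ contributes degree $\bar p + \bar q - 2$ in $a_1$, so with $\bar p + \bar q = 6$ the total degree in $a_1$ is at most $4$. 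Now when $a_1 \parallel a_\ell$, the factor $\det(a_1, a_\ell)$ appearing in every $v_k$ with $k \notin \{1, \ell\}$ kills those summands, reducing the sum to $v_1 \alpha_1^A + v_\ell \alpha_\ell^A$; one checks directly (this is the main calculation) that these two terms cancel. The analogous cancellation when $a_1 \perp b_j$ holds similarly. Together this gives $(\bar p - 1) + \bar q = 5$ distinct linear factors dividing a degree-$\leq 4$ polynomial in $a_1$, forcing it to be identically zero. When $\rank = 5$ the left kernel has dimension $6 - 5 = 1$ and is thus spanned by $v$.

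For part (4), under non-degeneracy each entry of $v$ is a product of nonzero quantities, so $v \neq 0$ and the left kernel of $C_{(\mathcal{A}, \mathcal{B})}$ contains $v$. I would conclude $\rank(C_{(\mathcal{A}, \mathcal{B})}) \geq 5$ by exhibiting a specific $5 \times 5$ minor (from five well-chosen rows and five columns transverse to $L$) that factors as a product of the nonzero quantities $\det(a_i, a_j)$, $\det(b_i, b_j)$, $\langle a_i, b_j\rangle$; combined with part (1) this gives $\rank(C_{(\mathcal{A}, \mathcal{B})}) = 5$.

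The main obstacle will be the two-term cancellation step inside part (3): verifying that $v_1 \alpha_1^A + v_\ell \alpha_\ell^A = 0$ when $a_1 \parallel a_\ell$ is itself a nontrivial algebraic identity requiring careful tracking of the $(-1)^{k + \mathbbm{1}_{k \geq \bar p + 1}}$ signs and the many product factors. If this conceptual route proves too intricate in practice, direct symbolic verification (along the lines of the GitHub code accompanying the paper) offers a reliable fallback.
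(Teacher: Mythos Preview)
Your approach to parts (1), (2), and (4) matches the paper's: direct verification of the kernel vectors via the evident column proportionalities, and exhibiting a nonvanishing $5\times 5$ minor for the rank statement.

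For part (3) you take a genuinely different route. The paper's argument is purely linear-algebraic: it passes to the $6\times 5$ submatrix $C'$ on columns $1,2,3,4,6$ and uses the standard fact that the vector of signed maximal minors $(C'_1,-C'_2,\dots)$ lies in the left kernel of any $6\times 5$ matrix; the only work is then to compute each of these five minors symbolically and recognize the product formula \eqref{eqn:left-kernel-C} (up to a global factor $\tfrac12$), which is done case-by-case in $(\bar p,\bar q)$ with computer assistance. This has the payoff that part (4) is immediate: under nondegeneracy every coordinate of $v$ is a nonzero product, so every maximal minor of $C'$ is nonzero and $\rank C_{(\mathcal A,\mathcal B)}=5$. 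Your interpolation argument---observing that each coordinate of $v^T C_{(\mathcal A,\mathcal B)}$ is homogeneous of degree $4$ in $a_1$ and divisible by the five generically coprime linear forms $\det(a_1,a_\ell)$ and $\langle a_1,b_j\rangle$---is correct and more conceptual, but it trades the paper's minor computation for the two--term cancellation you flag as the main obstacle. That cancellation does go through (when $a_1=\lambda a_\ell$ one finds $\alpha_1^A=\lambda^4\alpha_\ell^A$ and a careful sign count gives $v_\ell=-\lambda^4 v_1$, and the orthogonal case $a_1\perp b_j$ is analogous using $\alpha_1^A=-\mu^4\alpha_j^B$), so your plan is sound; it just does not yield part (4) as a free corollary the way the cofactor approach does.
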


\begin{proof}
We have $\rank(C_{(\mathcal{A},\mathcal{B})}) \leq 5$ since the columns $1,5,9$ are multiples of each other, as are $3,8$ and $6,7$.
Since column $9$ and is $-2$ times the first column of $C_{(\mathcal{A},\mathcal{B})}$, it follows that $C_{(\mathcal{A},\mathcal{B})} v_1 = 0$.
Similarly, column $9$ is $-2$ times column $5$, hence $C_{(\mathcal{A},\mathcal{B})} v_2 = 0$.
Finally, since the column $8$ is always the negative of the column $3$, and
the column $6$ is always the negative of the column $7$, it follows that $C_{(\mathcal{A},\mathcal{B})} v_3 = 0$ and $C_{(\mathcal{A},\mathcal{B})} v_4 = 0$.
Hence, $L \subseteq \ker C_{(\mathcal{A},\mathcal{B})}$. If $\rank(C_{(\mathcal{A},\mathcal{B})}) = 5$, then $\ker C_{(\mathcal{A}, \mathcal{B})}$ is $4$-dimensional, and thus $L = \ker C_{(\mathcal{A},\mathcal{B})}$.

Finally assume that $\bar{p}+\bar{q}=6$,
and consider the $6 \times 5$ submatrix $C'$ consisting of the columns $1, 2, 3, 4$ and $6$ of $C_{(\mathcal{A},\mathcal{B})}$. The vector $(-1)^{\bar{p}}(C_1', -C_2', C_3', -C_4', C_5, -C_6')$ where $C_i'$ is the $5 \times 5$ minor of $C'$ obtained by deleting row $i$ is in the left kernel of $C'$ and hence of $C_{(\mathcal{A},\mathcal{B})}$. For all possible cases of $\bar{p}$ and $\bar{q}$ one can computationally check that this vector is  equal to the coordinates~\eqref{eqn:left-kernel-C} scaled by $\frac{1}{2}$. If none of the $\det(a_i,a_j),\det(b_i,b_j)$ and $\langle a_i, b_j \rangle$ are zero, then none of the $5 \times 5$ minors vanish and hence rank of $C_{(\mathcal{A},\mathcal{B})}$ is five.
\end{proof}

\begin{lemma}\label{lem: no orthogonal pairs positive entries make alphas zero}
    Let $C \in \R^{m \times n}$  and let $P = \{x \in \R^n \mid C x \geq 0 \}$.
    If there exists a $\lambda = (\lambda_1, \dots, \lambda_m)$ such that $\lambda^T C = 0$ and $\lambda_1, \dots, \lambda_m >0$, then $Cx = 0$ for all $x \in P$.
\end{lemma}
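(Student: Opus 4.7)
The plan is to use a complementary-slackness-style argument that exploits the fact that $\lambda$ has \emph{strictly} positive entries together with $\lambda^T C = 0$. This is a standard observation about pointed convex cones, and no induction or structural work on $C$ itself is needed.

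First I would pick an arbitrary $x \in P$ and consider the vector $Cx \in \R^m$. By definition of $P$, every coordinate satisfies $(Cx)_i \geq 0$. Next I would compute $\lambda^T (Cx)$ in two different ways. On the one hand, associativity gives $\lambda^T (Cx) = (\lambda^T C)\, x = 0$ because $\lambda^T C = 0$ by hypothesis. On the other hand, $\lambda^T(Cx) = \sum_{i=1}^m \lambda_i (Cx)_i$ is a sum of nonnegative numbers, since $\lambda_i > 0$ and $(Cx)_i \geq 0$ for every $i$.

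Combining the two expressions, $\sum_{i=1}^m \lambda_i (Cx)_i = 0$ is a sum of nonnegative terms that vanishes, so each summand must be zero. Since $\lambda_i > 0$ for every $i$, this forces $(Cx)_i = 0$ for all $i$, i.e., $Cx = 0$. As $x \in P$ was arbitrary, the conclusion follows.

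There is no genuine obstacle in this argument; the only thing to be careful about is to use the \emph{strict} positivity of the $\lambda_i$ (not just nonnegativity), which is exactly what lets us pass from $\sum \lambda_i (Cx)_i = 0$ to $(Cx)_i = 0$ termwise. In the applications later in the paper, $C$ will be built from the rows $\alpha_i^A$ and $\alpha_j^B$ of $C_{(\mathcal{A},\mathcal{B})}$ from \Cref{lem:matrix-C}, and the positive combination $\lambda$ will come from the sign pattern of the left-kernel vector~\eqref{eqn:left-kernel-C}; the present lemma is what will let us conclude that $\alpha_i^A(D) = 0$ and $\alpha_j^B(D) = 0$ for every $2$-infinitesimal motion $D$.
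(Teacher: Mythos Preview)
Your proof is correct and follows essentially the same complementary-slackness idea as the paper. The paper phrases it by solving for one row $-\mathrm{c}_1$ as a positive combination of the others and then arguing row by row, whereas you compute $\lambda^T(Cx)=0$ once and kill all coordinates simultaneously; this is the same argument, packaged a bit more cleanly.
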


\begin{proof}
    Suppose that $\lambda = (\lambda_1, \dots ,\lambda_m)$ is with strictly positive entries, and that $\lambda^TC = 0$.
    Denote by $\mathrm{c}_i$ the $i$th row of $C$.
    This implies that $\lambda_1  \mathrm{c}_1 + \lambda_2 \mathrm{c}_2 + \cdots + \lambda_m \mathrm{c}_m = 0$, that is,
    \begin{equation*}
        - \mathrm{c}_1 = \frac{1}{\lambda_1} (\lambda_2 \mathrm{c}_2 + \cdots + \lambda_m \mathrm{c}_m).
    \end{equation*}
    Let $x \in P$ be arbitrary.
    Entries of $\lambda$ being strictly positive imply that 
    \begin{equation*}
        - \mathrm{c}_1 x= \frac{1}{\lambda_1} (\lambda_2 c_2 + \cdots + \lambda_m \mathrm{c}_m)x \geq 0.
    \end{equation*}
    However, as $x \in P$, then $\mathrm{c}_1 x \geq 0$ so it must follow that $\mathrm{c}_1 x = 0$.
    With similar reasoning we may conclude that $Cx = 0$ for all $x \in P$.
\end{proof}

\begin{definition}
We let  $P_{(\mathcal{A},\mathcal{B})}$
be the polyhedron 
$\{\underline{D} \in \R^9 \mid C_{(\mathcal{A},\mathcal{B})} \underline{D} \geq 0 \}$   where $C_{(\mathcal{A},\mathcal{B})}$ is defined as in \cref{lem:matrix-C}.    
\end{definition}

    The next lemma states that for $1$-infinitesimal motions it is enough to consider conditions arising from the determinants of the matrices $A^{(i)} + t \dot A^{(i)}$ and $B^{(j)} + t \dot B^{(j)}$.

\begin{lemma} \label{lemma:1-inf-motions-enough-to-consider-determinants}
     The cone $P_{(\A,\B)}$ is equal to the set of all $1$-infinitesimal motions. In particular,  
    if $\underline{D} \in P_{(\A, \B)}$, then $T^{1}([A^{(i)} + t \dot A^{(i)}]_{\{ \ell \}}) \geq 0$ and $T^{1}([B^{(j)} + t \dot B^{(j)} ]_{\{\ell\}}) \geq 0$ for $\ell \in \{1,2\}$ and for all $i \in [\bar{p}]$ and $j \in [\bar{q}]$.
\end{lemma}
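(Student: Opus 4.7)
The plan is to verify that the $T^1$-nonnegativity condition (3) of \Cref{def:psd infinitesimal}, which is the only nontrivial piece of the $1$-infinitesimal motion conditions once (1) and (2) are encoded through the correspondence $\dot\A = \A D$, $\dot\B = -D\B$, reduces exactly to $C_{(\A,\B)}\underline{D}\geq 0$. By \Cref{rmk: only zeros matter}, rank-$2$ factors impose no constraint since their diagonal entries and $2\times 2$ determinant are strictly positive and hence their Taylor polynomials are strictly positive for small $t>0$. So only the rank-$1$ factors matter.

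For a rank-$1$ factor $A^{(i)} = a_ia_i^T$, the $\{1,2\}$-minor contribution is
\[
T^1([A^{(i)} + t\dot A^{(i)}]_{\{1,2\}}) = t\,\alpha_i^A(D)
\]
by \Cref{rmk: rank-1 factors determinant form}, which is nonnegative on a half-interval $[0,\varepsilon)$ if and only if $\alpha_i^A(D)\geq 0$. The same argument for rank-$1$ $B^{(j)}$ yields $\alpha_j^B(D)\geq 0$. Gathered over all rank-$1$ factors, these inequalities form precisely the system defining $P_{(\A,\B)}$, so the $|I|=2$ part of condition (3) for rank-$1$ factors is captured by $P_{(\A,\B)}$.

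The remaining — and main — step is to show that for any $\underline{D}\in P_{(\A,\B)}$ the diagonal conditions $T^1([A^{(i)}+t\dot A^{(i)}]_{\{\ell\}})\geq 0$ also hold for every rank-$1$ $A^{(i)}$, and likewise for $B^{(j)}$. If $a_{i\ell}\neq 0$ this is automatic, since $A^{(i)}_{\ell\ell}=a_{i\ell}^2>0$. Otherwise, say $a_{i1}=0$ and $a_{i2}\neq 0$; computing $\dot{\underline{a}}^{(i)} = D^T\underline{a}^{(i)}$ with $\underline{a}^{(i)} = (0, a_{i2}^2, 0)^T$ gives $\dot A^{(i)}_{11} = a_{i2}^2 D_{21}$, so the diagonal condition reduces to $D_{21}\geq 0$. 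Substituting $a_{i1}=0$ into the formula for $\alpha_i^A$ from \Cref{lem:matrix-C} yields $\alpha_i^A(D) = a_{i2}^4 D_{21}$, which is nonnegative iff $D_{21}\geq 0$; thus the diagonal inequality is already subsumed by the determinantal one. The case $a_{i2}=0$ and the corresponding statements for $B^{(j)}$ are symmetric. This collapse of the diagonal and determinantal inequalities into a single linear inequality on $D$ is the main, though elementary, obstacle; once it is in place, both the identification of $P_{(\A,\B)}$ with the set of $1$-infinitesimal motions and the ``in particular'' clause follow at once.
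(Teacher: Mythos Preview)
Your proof is correct and follows essentially the same approach as the paper's: both reduce the diagonal $T^1$-conditions to the determinantal ones by observing that when a diagonal entry of a rank-one factor vanishes, the corresponding linear inequality in $D$ coincides (up to a positive scalar) with the one coming from $\alpha_i^A$ (you treat $a_{i1}=0$, the paper the symmetric case $a_{i2}=0$). Your explicit mention that rank-$2$ factors impose no constraint is a nice clarifying touch that the paper's proof leaves implicit.
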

    
\begin{proof}
     If $\underline{D}$ corresponds to a $1$-infinitesimal motion then $T^1([A^{(i)}+t \dot A^{(i)}]_{\{1,2\}}) = 
     \alpha_i^A(D) \geq 0$ for all $i \in [\bar p]$
     and 
     $T^1([B^{(j)}+t \dot B^{(j)}]_{\{1,2\}}) = 
     \alpha_j^B(D) \geq 0$ for all $j \in [\bar q]$. This means $\underline{D} \in P_{(\A, \B)}$. Conversely, if $\underline{D} \in P_{(\A,\B)}$, by definition of $\alpha_i^A$ and $\beta_j^B$, the above inequalities are satisfied. We also need to show that $T^{1}([A^{(i)} + t \dot A^{(i)}]_{\{ \ell \}}) \geq 0$ and $T^{1}([B^{(j)} + t \dot B^{(j)} ]_{\{\ell\}}) \geq 0$ for $\ell \in \{1,2\}$ and for all $i \in [\bar{p}]$ and $j \in [\bar{q}]$. 
     As stated in \cref{rmk: only zeros matter}, if $[A^{(i)}]_I > 0$, then $T^{s}([A^{(i)} + t \dot A^{(i)} ]_I) \geq 0$ for small enough $t>0$.
    In particular, if both diagonal entries of $A^{(i)}$ are nonzero, then for all $\underline{D} \in P_{(\A, \B)}$ the inequality $T^1([A^{(i)} + t \dot A^{(i)} ]_{\{\ell\}}) \geq 0$ is satisfied for $t \in [0,\varepsilon)$ and $\varepsilon > 0$ small enough. 

    Suppose then that a diagonal entry of $A^{(i)}$ is zero and consider $\underline{D} \in P_{(\A, \B)}$. 
    Without loss of generality, assume that the $(2,2)$ entry of $A^{(i)}$ is zero. 
    This means that $A^{(i)}$ is of the form
\[
A^{(i)} = 
\begin{pmatrix}
a & 0\\
0 & 0
\end{pmatrix},
\]
where $a>0$.
Then
\[
A^{(i)} + t \dot{A}^{(i)} = 
\begin{pmatrix}
a + taD_{11} & taD_{13}\\
taD_{13} & taD_{12}
\end{pmatrix}.
\]
Hence $T^1([A^{(i)} + t \dot{A}^{(i)}]_{\{2\}})=taD_{12}$ and $T^1([A^{(i)} + t \dot{A}^{(i)}]_{\{1,2\}})=ta^2D_{12}$. 
Since the inequality $a^2D_{12} \geq 0$ is one of the defining inequalities for the cone $P_{(\mathcal{A},\mathcal{B})}$ and $a>0$ by assumption, the inequality $aD_{12} \geq 0$ is satisfied for all points $\underline{D}$ in the cone $P_{(\mathcal{A},\mathcal{B})}$. Therefore $T^1([A^{(i)} + t \dot{A}^{(i)}]_{\{2\}})=taD_{12} \geq 0$ for all $t \geq 0$.
Furthermore, since $a > 0$ the inequality $T^1([A^{(i)} + t \dot{A}^{(i)}]_{\{1\}}) = a + taD_{11} \geq 0$ is satisfied for small enough $t>0$.
Thus, every $\underline{D} \in P_{(\A, \B)}$ satisfies the conditions imposed on the diagonal entries of the matrices $A^{(i)} + t \dot A^{(i)}$ and $B^{(j)} + t \dot B^{(j)}$. 
\end{proof}

\begin{proposition} \label{prop:1-inf-rigidity-implies-2-inf-rigidity}
Let $M \in \M_{3,2}^{3 \times 3}$  and consider a size-$2$ psd factorization of $M$ given by factors $A^{(1)}$, $A^{(2)}$, $A^{(3)}$, $B^{(1)}$, $B^{(2)}$, $B^{(3)}$ of rank one, where $A^{(i)} = a_i a_i^T$ and $B^{(j)} = b_j b_j^T$ for $a_i, b_j \in \R^2$. Assume all $\det(a_i,a_j) \neq 0$ and  $\det(b_i,b_j) \neq 0$. 
If the psd factorization is $1$-infinitesimally rigid, then it is also $2$-infinitesimally rigid. 
\end{proposition}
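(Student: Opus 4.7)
The plan is to use the 1-infinitesimal rigidity hypothesis to reduce any candidate 2-infinitesimal motion to a 1-trivial one, and then to show that the genuine second-order positivity inequalities, combined with the existence of three pairwise linearly independent rank-one $A$-factors, collapse the $4$-dimensional space of 1-trivial motions to the scalar line of 2-trivial motions.

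By \Cref{lemma:t-inf-motion-is-s-inf-motion} every 2-infinitesimal motion is a 1-infinitesimal motion, so by hypothesis it is 1-trivial and \Cref{thm:1-trivial} gives $D=\sum_{l=1}^4\lambda_l D_l$ with $D_1,\dots,D_4$ as in~\eqref{eqn:1-trivial-motions-for-size-2-psd-factorizations}. From the identity $T^1([A+t\dot A]_{\{1,2\}})=\det(A)\bigl(1+2(\lambda_1+\lambda_2)t\bigr)$ recorded in the proof of \Cref{thm:1-trivial}, I read off $\alpha_i^A(D)=0$ for every rank-one factor $A^{(i)}$. Using \Cref{rmk: rank-1 factors determinant form}, the second-order condition becomes
\[
T^2([A^{(i)}+t\dot A^{(i)}]_{\{1,2\}})=t^2\beta_i^A(D)\geq 0 \qquad \text{for small } t>0,
\]
which forces $\beta_i^A(D)\geq 0$. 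Combined with $\beta_i^A(D)\leq 0$ from \Cref{prop:zero-alpha-forces-beta}, this yields $\beta_i^A(D)=0$ for $i=1,2,3$.

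The main computation, which I would carry out directly from $\dot{\underline{a}}^{(i)}=D^T\underline{a}^{(i)}$ and the explicit form of the $D_l$, is the identity
\[
\beta_i^A(D) \;=\; -\bigl(\lambda_4\,a_{i1}^2-(\lambda_1-\lambda_2)\,a_{i1}a_{i2}-\lambda_3\,a_{i2}^2\bigr)^2
\]
for $A^{(i)}=a_ia_i^T$ with $a_i=(a_{i1},a_{i2})$. This calculation is routine but is the only real technical step; as a byproduct it recovers $\beta_i^A(D)\leq 0$ without invoking \Cref{prop:zero-alpha-forces-beta}. Now $\beta_i^A(D)=0$ says that the binary quadratic form $q(X,Y):=\lambda_4 X^2-(\lambda_1-\lambda_2)XY-\lambda_3 Y^2$ vanishes at each of the three points $(a_{i1},a_{i2})$. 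The hypothesis $\det(a_i,a_j)\neq 0$ ensures that these three vectors give three distinct points of $\mathbb{P}^1$, and since a nonzero binary quadratic form has at most two projective roots, $q$ must vanish identically. This forces $\lambda_3=\lambda_4=0$ and $\lambda_1=\lambda_2$, whence $D=2\lambda_1 I$, which is 2-trivial by \Cref{thm:trivial_general_case}. The main obstacle is verifying the explicit polynomial identity for $\beta_i^A(D)$; everything else is a short structural argument using the results already established.
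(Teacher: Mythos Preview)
Your proof is correct, and the overall strategy matches the paper's: reduce any $2$-infinitesimal motion to a $1$-trivial one via \Cref{lemma:t-inf-motion-is-s-inf-motion} and the hypothesis, then use the second-order determinant constraints on the rank-one $A$-factors to force the $1$-trivial parameters down to the scalar line. The execution of this second step, however, differs. The paper first applies the $\operatorname{GL}(2)$-action to normalize $a_1=(1,0)$ (invoking \Cref{cor:bijection_between_inf_motions}), uses $\beta_1^A(D)=-\tfrac{1}{2}r_3^2\geq 0$ to kill one parameter, and then sets up a $5\times 2$ linear system from $\beta_2^A,\beta_3^A,\beta_1^B,\beta_2^B,\beta_3^B$, showing it has rank $2$ via $\det(a_2,a_3)\neq 0$. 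Your version avoids the normalization and the $B$-factors entirely: the single polynomial identity $\beta_i^A(D)=-q(a_i)^2$ with $q$ a binary quadratic form, together with the observation that three pairwise independent $a_i$ give three distinct points in $\mathbb{P}^1$, directly forces $q\equiv 0$. This is a cleaner and more symmetric packaging of the same linear-algebraic content; in fact, after the paper's normalization $a_1=(1,0)$ the quadratic $q$ acquires the root $[1:0]$ and factors as $-Y\cdot(\text{linear form})$, which is precisely what the paper's $5\times 2$ system encodes. Both arguments use only the hypothesis $\det(a_i,a_j)\neq 0$ (the condition on the $b_j$'s is never needed), but yours makes this more transparent.
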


\begin{proof}
Assume that the psd factorization is $1$-infinitesimally rigid.  Hence, all $1$-infinitesimal motions  are $1$-trivial. 
The set of all $1$-trivial motions is precisely $L$ as in \cref{lem:matrix-C}, so that $P_{(\mathcal{A},\mathcal{B})} = L$ by~\Cref{lemma:1-inf-motions-enough-to-consider-determinants}. We may assume that $a_1 = (1,0)$ since by~\cref{lemma:rotation-gives-psd-factorization} the $\text{GL}(2)$-action maps a psd factorization to another psd factorization, and by \cref{cor:bijection_between_inf_motions} there is a linear isomorphism between the matrices $D$ corresponding to $1$- and $2$-infinitesimal motions and those obtained by the $\operatorname{GL}(2)$-action. We note that since $2$-trivial motions are given by $D=dI_3$, \cref{cor:bijection_between_inf_motions} implies that after the $\operatorname{GL}(2)$-action a $2$-trivial motion is given by the same $D$. Also, \cref{rmk: rank-1 factors determinant form} implies that the cone $P_{(\mathcal{A},\mathcal{B})}$
stays the same. 

    Next we aim to characterize all 2-infinitesimal motions. Consider a matrix $D$ corresponding to a 2-infinitesimal motion of the factorization.
    Since every 2-infinitesimal motion is a 1-infinitesimal motion by~\cref{{lemma:t-inf-motion-is-s-inf-motion}}, it can be written as a linear combination of the form $\sum_{i=1}^4 r_i v_i$, where $r_1, r_2, r_3, r_4 \in \R$, i.e., 
    \begin{equation*}
        \underline{D} = (2r_1, 0, r_3, 0, 2r_2, r_4, r_4, r_3, r_1+r_2).
    \end{equation*}
    Furthermore, $\beta_1^A(D)t^2 = -\frac{1}{2}r_3^2 t^2 \geq 0$, from which it follows that $r_3 = 0$.
    Then 
    \begin{equation}\label{eqn: no orthogonal pairs remaining betas}
        \begin{split}
           \beta_2^A(D) t^2 &= - \frac{1}{2} a_{22}^2 (\sqrt{2}a_{21} (r_1-r_2) +a_{22}r_4)^2 t^2 \geq 0, \\
            \beta_3^A(D)t^2 &= - \frac{1}{2} a_{32}^2 (\sqrt{2}a_{31} (r_1-r_2) +a_{32}r_4)^2 t^2 \geq 0, \\
            \beta_1^B(D)t^2 &= - \frac{1}{2} b_{11}^2 (\sqrt{2}b_{12} (r_1-r_2)  - b_{11}r_4)^2 t^2 \geq 0, \\
            \beta_2^B(D)t^2 &= - \frac{1}{2} b_{21}^2 (\sqrt{2}b_{22} (r_1-r_2)  - b_{21}r_4)^2 t^2 \geq 0, \\
            \beta_3^B(D)t^2 &= - \frac{1}{2} b_{31}^2 (\sqrt{2}b_{32} (r_1-r_2)  - b_{31}r_4)^2 t^2 \geq 0. \\
        \end{split}
    \end{equation}

    From the inequalities \eqref{eqn: no orthogonal pairs remaining betas}, we get the system of equations
    \begin{equation}\label{eqn: r1 r2 system}
        \begin{pmatrix}
            \sqrt{2}a_{21} & a_{22} \\
            \sqrt{2}a_{31} & a_{32} \\
            \sqrt{2}b_{12} & -b_{11} \\
            \sqrt{2}b_{22} & -b_{21} \\
            \sqrt{2}b_{32} & -b_{31}
        \end{pmatrix} \begin{pmatrix}
            r_1-r_2 \\
            r_4
        \end{pmatrix} = 0.
    \end{equation}
    The rank of the above matrix is $2$
    since the submatrix
$S = \begin{pmatrix}
            \sqrt{2}a_{21} & a_{22} \\
            \sqrt{2}a_{31} & a_{32}
        \end{pmatrix}$ has determinant 
 $\det(S) =  \sqrt{2} \det(a_2, a_3)$
    where $\det(a_2, a_3) \neq 0$ by assumption.
    Hence, the only possible solution to the system \eqref{eqn: r1 r2 system} is $r_1 = r_2$ and $r_4=0$, and the only 2-infinitesimal motions correspond to
    \begin{equation*}
        \underline{D} = (2r_1, 0, 0, 0, 2r_1, 0, 0, 0, 2r_1),
    \end{equation*}
    i.e., the factorization is 2-infinitesimally rigid.
\end{proof}

\begin{lemma}\label{lem: full-dimensional cone implies inf.rig}
Let $M \in \M_{3,2}^{p \times q}$ and consider a size-$2$ psd factorization of $M$ given by factors $A^{(1)}$, $\dots$, $A^{(p)}$, $B^{(1)}$, $\dots$, $B^{(q)} \in S^2_+$ of rank one.
 If the cone $P_{(\mathcal{A},\mathcal{B})}$ is full-dimensional, then the psd factorization is $s$-infinitesimally flexible for $s \in \{1,2\}$. 
\end{lemma}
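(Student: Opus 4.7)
My plan is to produce an explicit 2-infinitesimal motion $\underline{D}^*$ of the factorization that is neither 1-trivial nor 2-trivial; combined with \Cref{lemma:t-inf-motion-is-s-inf-motion} this establishes $s$-infinitesimal flexibility for both $s = 1$ and $s = 2$. The candidate will simply be any point in the relative interior of $P_{(\mathcal{A},\mathcal{B})}$, which exists by the full-dimensionality hypothesis.

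The key preliminary observation is that every 1-trivial motion lies on the boundary of $P_{(\mathcal{A}, \mathcal{B})}$. Indeed, \Cref{lem:matrix-C}(2) shows that the spanning vectors $v_1, v_2, v_3, v_4$ of $L$ belong to $\ker C_{(\mathcal{A},\mathcal{B})}$, so every 1-trivial $\underline{D}$ satisfies $\alpha_i^A(\underline{D}) = 0$ and $\alpha_j^B(\underline{D}) = 0$ for all $i,j$. Since $\underline{dI_3} = \tfrac{d}{2}v_1 + \tfrac{d}{2}v_2 \in L$, the 2-trivial motions (characterized by \Cref{thm:trivial_general_case}) are also contained in the boundary. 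Hence any interior point $\underline{D}^*$ of $P_{(\mathcal{A},\mathcal{B})}$ fails to be either 1-trivial or 2-trivial.

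It remains to verify that such $\underline{D}^*$ corresponds to a 2-infinitesimal motion, i.e., that $A^{(i)} + t\dot{A}^{(i)}, B^{(j)} + t\dot{B}^{(j)} \in \mathcal{S}_+^2$ for small $t > 0$. At the interior point, $\alpha_i^A(\underline{D}^*) > 0$ and $\alpha_j^B(\underline{D}^*) > 0$ strictly. Since the factors have rank one, $\det(A^{(i)} + t\dot{A}^{(i)}) = t\alpha_i^A(\underline{D}^*) + t^2\beta_i^A(\underline{D}^*)$, which is positive for small $t > 0$. The diagonal conditions $[A^{(i)} + t\dot{A}^{(i)}]_{\{\ell\}} \geq 0$ hold automatically for small $t>0$ when $[A^{(i)}]_{\{\ell\}} > 0$; in the degenerate case $[A^{(i)}]_{\{\ell\}} = 0$ (where $A^{(i)}$ has a zero on the diagonal), the exact computation in the proof of \Cref{lemma:1-inf-motions-enough-to-consider-determinants} shows that strict positivity of $\alpha_i^A(\underline{D}^*)$ forces the corresponding diagonal entry of $\dot{A}^{(i)}$ to be strictly positive, so $[A^{(i)} + t\dot{A}^{(i)}]_{\{\ell\}} > 0$ for $t > 0$. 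The $B^{(j)}$ factors are treated identically.

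The only technical point, which is mild, is the last verification that the strict positivity of $\alpha_i^A(\underline{D}^*)$ and $\alpha_j^B(\underline{D}^*)$ propagates to all required nonnegativities for PSD-ness; but this reduces entirely to the argument already carried out in \Cref{lemma:1-inf-motions-enough-to-consider-determinants}, so no new ideas are needed. Combining the three observations above, $\underline{D}^*$ is a 2-infinitesimal motion that is neither 1- nor 2-trivial, so the factorization is $s$-infinitesimally flexible for $s \in \{1,2\}$.
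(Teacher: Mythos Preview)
Your proof is correct and follows essentially the same approach as the paper's. The only minor difference is one of packaging: the paper handles $s=1$ by a dimension count (the 4-dimensional space $L$ of 1-trivial motions is strictly contained in the 9-dimensional cone $P_{(\mathcal{A},\mathcal{B})}$) and then separately picks an interior point for $s=2$, whereas you use a single interior point $\underline{D}^*$ for both cases and invoke \Cref{lemma:t-inf-motion-is-s-inf-motion}; this is a slight streamlining but not a different idea.
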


\begin{proof}
Assume $\dim(P_{(\mathcal{A},\mathcal{B})}) = 9$. Let $C_{(\mathcal{A},\mathcal{B})}$ and $L$ be as in \cref{lem:matrix-C}. 
We will first show that the psd factorization is not $1$-infinitesimally rigid.
Each $\underline{D}\in P_{(\mathcal{A},\mathcal{B})}$ corresponds to a $1$-infinitesimal motion by~\Cref{lemma:1-inf-motions-enough-to-consider-determinants}. 
Since all $1$-trivial motions of the factorization are in $L \subset P_{(\mathcal{A},\mathcal{B})}$ which is just $4$-dimensional, it follows that the factorization is $1$-infinitesimally flexible.

Next we will show that the psd factorization is not 2-infinitesimally rigid. There exists an interior point in the cone $P_{(\mathcal{A},\mathcal{B})}$, i.e., there exists some $\underline{D} \in P_{(\mathcal{A},\mathcal{B})}$ such that $C_{(\mathcal{A},\mathcal{B})}\underline{D} > 0$.   Recall that by \cref{rmk: rank-1 factors determinant form} the determinants of the matrices $A^{(i)} + t \dot A^{(i)}$ and $B^{(j)}+ t \dot B^{(j)}$ are of the form $\alpha(D)t + \beta(D) t^2$. The inequality $C_{(\mathcal{A},\mathcal{B})}\underline{D} > 0$ is equivalent to $\alpha^A_i \underline{D} >0$ and $\alpha^B_j \underline{D} >0$ for all $i,j$ by the definition of $C_{(\mathcal{A},\mathcal{B})}$.
It follows that $\alpha^A_i(D)t + \beta^A_i(D) t^2 \geq 0$ and $\alpha^B_j(D)t + \beta^B_j(D) t^2 \geq 0$ for all $t \in [0, \varepsilon)$ for $\varepsilon > 0$ small enough.
Therefore $\underline{D}$ corresponds to a 2-infinitesimal motion of the psd factorization. 
It is not a scalar multiple of the 2-trivial motion $(1,0,0,0,1,0,0,0,1)$ because this 2-trivial motion is in the right kernel of the matrix $C_{(\mathcal{A},\mathcal{B})}$ by~\Cref{lem:matrix-C} but $\underline{D}$ is not. Hence $\underline{D}$ is a non-trivial 2-infinitesimal motion and the psd factorization is 2-infinitesimally flexible. 
\end{proof}

\begin{proposition}\label{thm:  3+3 no orthogonal pairs case}
Let $M \in \M_{3,2}^{3 \times 3}$ and consider a size-$2$ psd factorization of $M$ given by factors $A^{(1)}$, $A^{(2)}$, $A^{(3)}$, $B^{(1)}$, $B^{(2)}$, $B^{(3)}$, of rank one where $A^{(i)} = a_i a_i^T$ and $B^{(j)} = b_j b_j^T$ for $a_i, b_j \in \R^2$. Assume all $\det(a_i,a_j) \neq 0, \det(b_i,b_j) \neq 0$ and $\langle a_i,b_j \rangle \neq 0$.
Then the factorization is $1$-infinitesimally rigid if and only if 
\begin{equation} \label{eqn: 15 factors}
\begin{split}    
\frac{\det(b_1,b_3)\det(b_2,b_3)\langle a_2,b_3 \rangle \langle a_3,b_3 \rangle}{\det(a_1,a_2)\det(a_1,a_3)\langle a_1,b_1 \rangle \langle a_1,b_2 \rangle} >0 \\
-\frac{\det(b_1,b_3)\det(b_2,b_3)\langle a_1,b_3 \rangle \langle a_3,b_3 \rangle}{\det(a_1,a_2)\det(a_2,a_3)\langle a_2,b_1 \rangle \langle a_2,b_2 \rangle} >0 \\
\frac{\det(b_1,b_3)\det(b_2,b_3)\langle a_1,b_3 \rangle \langle a_2,b_3 \rangle}{\det(a_1,a_3)\det(a_2,a_3)\langle a_3,b_1 \rangle \langle a_3,b_2 \rangle} >0\\
\frac{\det(b_2,b_3) \langle a_1,b_3 \rangle \langle a_2,b_3 \rangle \langle a_3,b_3 \rangle}{\det(b_1,b_2)\langle a_1,b_1 \rangle \langle a_2,b_1 \rangle \langle a_3,b_1 \rangle} >0 \\
-\frac{\det(b_1,b_3) \langle a_1,b_3 \rangle \langle a_2,b_3 \rangle \langle a_3,b_3 \rangle}{\det(b_1,b_2)\langle a_1,b_2 \rangle \langle a_2,b_2 \rangle \langle a_3,b_2 \rangle} >0.
\end{split}
\end{equation}
If the factorization is $1$-infinitesimally flexible, then the cone $P_{(\mathcal{A},\mathcal{B})}$ is full-dimensional.
\end{proposition}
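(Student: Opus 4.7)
The plan is to recast $1$-infinitesimal rigidity as a polyhedral condition on $C_{(\mathcal{A},\mathcal{B})}$ and then translate it into a sign condition on the explicit generator of its left kernel. By \Cref{lemma:1-inf-motions-enough-to-consider-determinants} the set of $1$-infinitesimal motions equals $P_{(\mathcal{A},\mathcal{B})}=\{\underline{D}\in\R^{9}:C_{(\mathcal{A},\mathcal{B})}\underline{D}\geq 0\}$, while by \Cref{thm:1-trivial} the set of $1$-trivial motions is the $4$-dimensional subspace $L$ introduced in \Cref{lem:matrix-C}. Under the given non-vanishing hypotheses, \Cref{lem:matrix-C}(4) gives $\operatorname{rank}(C_{(\mathcal{A},\mathcal{B})})=5$, and hence by \Cref{lem:matrix-C}(2) we have $L=\ker(C_{(\mathcal{A},\mathcal{B})})\subseteq P_{(\mathcal{A},\mathcal{B})}$. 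Therefore the factorization is $1$-infinitesimally rigid if and only if $C_{(\mathcal{A},\mathcal{B})}\underline{D}\geq 0$ forces $C_{(\mathcal{A},\mathcal{B})}\underline{D}=0$.

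Next I would pass to the left kernel of $C_{(\mathcal{A},\mathcal{B})}$, which by \Cref{lem:matrix-C}(3) is the one-dimensional span of the vector $v\in\R^{6}$ whose coordinates are given by \eqref{eqn:left-kernel-C}; under the non-vanishing hypotheses every entry $v_k$ is non-zero. If all six $v_k$ share a common sign, then either $v$ or $-v$ has strictly positive entries, and \Cref{lem: no orthogonal pairs positive entries make alphas zero} immediately yields $C_{(\mathcal{A},\mathcal{B})}\underline{D}\geq 0\Rightarrow C_{(\mathcal{A},\mathcal{B})}\underline{D}=0$, hence $1$-infinitesimal rigidity. If instead the signs of the $v_k$'s are mixed, then no non-zero non-negative vector lies in the left kernel, so by Gordan's theorem the strict system $C_{(\mathcal{A},\mathcal{B})}\underline{D}>0$ is feasible; consequently $P_{(\mathcal{A},\mathcal{B})}$ has non-empty interior and is full-dimensional, and \Cref{lem: full-dimensional cone implies inf.rig} then gives $1$-infinitesimal flexibility. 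Combined with the identification in the next paragraph, this chain proves both directions of the biconditional together with the second assertion (``flexibility $\Rightarrow$ mixed signs $\Rightarrow P_{(\mathcal{A},\mathcal{B})}$ full-dimensional'').

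The remaining step, which I expect to carry the only real bookkeeping, is to verify that \eqref{eqn: 15 factors} is equivalent to all entries of $v$ sharing a common sign. A direct computation from \eqref{eqn:left-kernel-C} shows that for $k=1,\ldots,5$ most of the $\det(a_i,a_j)$'s, $\det(b_i,b_j)$'s, and $\langle a_i,b_j\rangle$'s appearing in $v_k$ cancel against the corresponding factors in $v_6$, and after accounting for the residual parity $(-1)^{k+\mathbbm{1}_{k\geq 4}+1}$ the quotient $v_k/v_6$ equals exactly the $k$-th left-hand side in \eqref{eqn: 15 factors} (the explicit minus signs at $k=2,5$ arise from this parity). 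Hence the five strict inequalities of \eqref{eqn: 15 factors} are equivalent to $v_k/v_6>0$ for $k=1,\ldots,5$, i.e., to $v_1,\ldots,v_6$ all sharing one common sign, closing the argument. The polyhedral and duality steps are routine once the generator $v$ from \Cref{lem:matrix-C} is in hand; the cancellation-heavy identification of $v_k/v_6$ with the ratios in \eqref{eqn: 15 factors} is the main technical obstacle.
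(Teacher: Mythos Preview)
Your proposal is correct and follows essentially the same route as the paper: both arguments identify $1$-infinitesimal rigidity with $P_{(\mathcal{A},\mathcal{B})}=L=\ker C_{(\mathcal{A},\mathcal{B})}$, use the one-dimensional left kernel generator $v$ from \Cref{lem:matrix-C}, and split on whether the entries of $v$ have a common sign, invoking \Cref{lem: no orthogonal pairs positive entries make alphas zero} in the rigid case and a theorem of the alternative (you call it Gordan's theorem, the paper calls it Farkas' lemma) plus \Cref{lem: full-dimensional cone implies inf.rig} in the flexible case. The only cosmetic difference is that the paper normalizes by the last coordinate of $v$ directly, whereas you phrase this as computing the ratios $v_k/v_6$; the parity bookkeeping you outline matches the signs appearing in \eqref{eqn: 15 factors}.
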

\begin{proof}
Consider the cone $P_{(\mathcal{A},\mathcal{B})}$.
By Farkas' lemma there exists $y \geq 0, y \neq 0$ such that $y^T C_{(\mathcal{A},\mathcal{B})} = 0$ if and only if there does not exist an element $\underline{D} \in \R^9$ such that $C_{(\mathcal{A},\mathcal{B})}\underline{D} >0$. As all $\det(a_i,a_j) \neq 0, \det(b_i,b_j) \neq 0$ and $\langle a_i,b_j \rangle \neq 0$, we have $\rank(C_{(\mathcal{A},\mathcal{B})}) = 5$ by~\Cref{lem:matrix-C}. 
The matrix $C_{(\mathcal{A},\mathcal{B})}$ has a $1$-dimensional left kernel, whose generator is given by~\eqref{eqn:left-kernel-C}.
We can divide all coordinates of~\eqref{eqn:left-kernel-C} by the last coordinate resulting in the coordinates appearing in~\eqref{eqn: 15 factors} and one as the last coordinate. Thus having $y \geq 0, y \neq 0$ such that $y^T C_{(\mathcal{A}, \mathcal{B})} = 0$ is equivalent to the inequalities~\eqref{eqn: 15 factors} being satisfied because all factors in~\eqref{eqn: 15 factors} are nonzero by the hypothesis.

First assume that \eqref{eqn: 15 factors} holds.  
Then by \cref{lem: no orthogonal pairs positive entries make alphas zero}, we have $\alpha_i^A \underline{D} = \alpha_j^B \underline{D} = 0$ for all $\underline{D} \in P$ and hence $P_{(\mathcal{A},\mathcal{B})}$ is contained in the right kernel of $C_{(\mathcal{A},\mathcal{B})}$. The right kernel of $C_{(\mathcal{A},\mathcal{B})}$ is equal to $L$  by \cref{lem:matrix-C}. Thus $P_{(\mathcal{A},\mathcal{B})} \subseteq L$ and the psd factorization is $1$-infinitesimally rigid.

If \eqref{eqn: 15 factors} does not hold, then every nonzero $y$ satisfying $y^T C_{(\mathcal{A},\mathcal{B})} = 0$ has both positive and negative entries. 
By Farkas' lemma there must exist an element $\underline{D} \in \R^9$ such that $C_{(\mathcal{A},\mathcal{B})}\underline{D} >0$.
By \cref{lem: full-dimensional cone implies inf.rig}, the factorization is $1$-infinitesimally flexible.
\end{proof}

\begin{remark}
There are $2^{15}$ different sign configurations for the factors $\det(a_i,a_j),\det(b_i,b_j)$ and $\langle a_i, b_j \rangle$, and $2^{11} = 2048$  of these result in all positive entries in the generator $y$.
The rest result in both positive and negative entries in $y$.
Both of these cases are realizable.

    Consider the psd factorization given by the factors
    \begin{align*}
        A^{(1)} &= \begin{pmatrix}
            1 & 0 \\
            0 & 0
        \end{pmatrix}, \quad A^{(2)} = \begin{pmatrix}
            ~\frac{1}{4} & -\frac{1}{4} \\
            -\frac{1}{4} & ~\frac{1}{4}
        \end{pmatrix}, \quad A^{(3)} = \begin{pmatrix}
            0 & 0 \\
            0 & 1
        \end{pmatrix}, \\
        B^{(1)} &= \begin{pmatrix}
            \frac{1}{4} & \frac{3}{4} \\
            \frac{3}{4} & \frac{9}{4}
        \end{pmatrix}, \quad B^{(2)} = \begin{pmatrix}
            ~\frac{1}{4} & -\frac{1}{4} \\
            -\frac{1}{4} & ~\frac{1}{4}
        \end{pmatrix}, \quad B^{(3)} = \begin{pmatrix}
            1 & \frac{1}{4} \\
            \frac{1}{4} & \frac{1}{16}
        \end{pmatrix}.
    \end{align*}
    This factorization satisfies \eqref{eqn: 15 factors}, and thus it is $1$-infinitesimally rigid by \cref{thm:  3+3 no orthogonal pairs case}.

    Consider the psd factorization given by the factors 
    \begin{align*}
        A^{(1)} & = \begin{pmatrix}
            1 & 2 \\
            2 & 4
        \end{pmatrix}, \quad
        A^{(2)} = \begin{pmatrix}
            1 & 3 \\
            3 & 9
        \end{pmatrix}, \quad
        A^{(3)} = \begin{pmatrix}
            1 & 4 \\
            4 & 16
        \end{pmatrix} \\
        B^{(1)} & = \begin{pmatrix}
            1 & 5 \\
            5 & 25
        \end{pmatrix}, \quad
        B^{(2)} = \begin{pmatrix}
            1 & 6 \\
            6 & 36
        \end{pmatrix}, \quad
        B^{(3)} = \begin{pmatrix}
            1 & 7 \\
            7 & 49
        \end{pmatrix}.
    \end{align*}
    This factorization does not satisfy \eqref{eqn: 15 factors}, and thus  it is $1$-infinitesimally flexible by \cref{thm:  3+3 no orthogonal pairs case}.
\end{remark}

\begin{lemma} \label{lem:4A_and_2B}
     Let $A^{(1)},\dots , A^{(\bar{p})},B^{(1)}, \dots, B^{(\bar{q})} \in \mathcal{S}_+^2$, $\bar{p} + \bar{q} \leq 6$, $\bar{p} \neq \bar{q}$, be psd matrices of rank one, where $A^{(i)} = a_ia_i^T$ and $B^{(j)} = b_jb_j^T$ for $a_i = (a_{i1}, a_{i2}), b_j = (b_{j1}, b_{j2}) \in \R^2$. Assume all $\det(a_i,a_j) \neq 0, \det(b_i,b_j) \neq 0$ and $\langle a_i,b_j \rangle \neq 0$.
     Then the cone $P_{(\mathcal{A},\mathcal{B})}$ is full-dimensional.     
\end{lemma}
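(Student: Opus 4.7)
The plan is to apply Farkas' lemma as in~\cref{thm:  3+3 no orthogonal pairs case}: the cone $P_{(\mathcal{A},\mathcal{B})}$ is full-dimensional if and only if there is no nonzero $y\ge 0$ with $y^T C_{(\mathcal{A},\mathcal{B})}=0$. Hence I would show that the only nonnegative vector in the left kernel of $C_{(\mathcal{A},\mathcal{B})}$ is zero.

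The key preliminary is to recognise the rows of $C_{(\mathcal{A},\mathcal{B})}$ as images of points on a rational normal curve of degree $4$. Since the right kernel of $C_{(\mathcal{A},\mathcal{B})}$ contains the four-dimensional subspace $L$ of~\cref{lem:matrix-C}, projecting $\R^9$ onto the five coordinates at positions $\{2,3,4,6,9\}$ is a linear isomorphism on $L^\perp$; in these coordinates $\alpha_i^A$ becomes $\nu(a_i)$ and $\alpha_j^B$ becomes $-\nu(\tilde b_j)$, where $\tilde b_j=(-b_{j2},b_{j1})$ is the $90^\circ$ rotation of $b_j$ and $\nu(u,v)=(u^4,-\sqrt 2\,u^3v,v^4,-\sqrt 2\,uv^3,-2u^2v^2)$ is a degree-$4$ monomial map obtained from the standard Veronese by a linear change of coordinates of positive determinant. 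The coefficients of any linear dependence among $\nu$-images thus coincide with those for the standard Veronese. The three hypotheses $\det(a_i,a_j),\det(b_i,b_j),\langle a_i,b_j\rangle\ne 0$ translate into pairwise non-proportionality of the points $a_1,\ldots,a_{\bar p},\tilde b_1,\ldots,\tilde b_{\bar q}$ in $\R^2$, so they represent $\bar p+\bar q$ distinct points of $\mathbb{P}^1$; in particular $a_i\parallel\tilde b_j$ is equivalent to $\langle a_i,b_j\rangle=0$.

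When $\bar p+\bar q\le 5$, the general-position property of the rational normal curve of degree $4$ in $\R^5$ gives linear independence of the $\nu$-images, so the rows of $C_{(\mathcal{A},\mathcal{B})}$ are linearly independent, the left kernel is trivial, and the cone is full-dimensional by Farkas. When $\bar p+\bar q=6$ with $\bar p\ne\bar q$, the left kernel is one-dimensional, generated by the vector $v$ of~\cref{lem:matrix-C} whose entries are all nonzero under our assumptions. Parametrising the six points as $p_k=(\cos\theta_k,\sin\theta_k)$ with $\theta_k\in[0,\pi)$ and reindexing so that $\theta_1<\cdots<\theta_6$, a Cramer-type expansion together with the Vandermonde identity for rational normal curves gives the coefficients of the unique linear dependence $\sum c_k\nu(p_k)=0$ as a positive multiple of $(-1)^{k-1}\prod_{i<j,\,i,j\ne k}\sin(\theta_j-\theta_i)$. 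These coefficients alternate in sign, splitting into exactly three positive and three negative entries.

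Translating back, the identity $y^T C_{(\mathcal{A},\mathcal{B})}=0$ reads $\sum_{i=1}^{\bar p}y_i\,\nu(a_i)-\sum_{j=1}^{\bar q}y_{\bar p+j}\,\nu(\tilde b_j)=0$, so the dependence coefficients in terms of $\nu$-images are $c_k=y_k$ for $k\le\bar p$ and $c_k=-y_k$ for $k>\bar p$. Any nonzero $y\ge 0$ in the one-dimensional left kernel is a scalar multiple of $v$; since $v$ has all entries nonzero, all $y_k$ must then be strictly positive. The $c_k$ thus comprise $\bar p$ positive entries followed by $\bar q$ negative entries, and the $3+3$ split forced by the alternation gives $\bar p=\bar q=3$, contradicting $\bar p\ne\bar q$. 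Hence no such $y$ exists and $P_{(\mathcal{A},\mathcal{B})}$ is full-dimensional. The main technical obstacle will be the sign-alternation step: one must verify carefully that $\nu$, which is not literally the standard Veronese, has the same kernel sign pattern, and that the reindexing of the six points by angle is compatible with the Cramer/Vandermonde expansion.
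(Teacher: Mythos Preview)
Your approach is correct and genuinely different from the paper's. The paper proceeds by Farkas as you do, but for the case $\bar p+\bar q=6$ with $\bar p\in\{1,2,4,5\}$ it simply invokes the explicit left-kernel formula~\eqref{eqn:left-kernel-C} and reports a computational sign check; the extreme cases $\bar p=6$ or $\bar q=6$ are handled by a separate ad hoc positivity argument on the $a_{i1}^4$ and $a_{i2}^4$ columns, and $\bar p+\bar q<6$ is reduced to $\bar p+\bar q=6$ by padding. Your argument replaces all of this with one structural observation: after passing to five independent columns, the $A$-rows and (negated) $B$-rows lie on a degree-$4$ rational normal curve in $\R^5$, via $a_i\mapsto\nu(a_i)$ and $b_j\mapsto-\nu(\tilde b_j)$. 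The hypotheses become distinctness of the six projective points, and the unique circuit among six such points has exactly three positive and three negative coefficients, forcing $\bar p=\bar q=3$.

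The technical worries you flag are benign. Since $\nu$ differs from the standard Veronese $\phi(u,v)=(u^4,u^3v,u^2v^2,uv^3,v^4)$ by an invertible linear map on the target, the left kernels of the matrices with rows $\nu(p_k)$ and $\phi(p_k)$ coincide, so the circuit coefficients are identical. For the alternation, parametrise $p_k=(\cos\theta_k,\sin\theta_k)$ with $0\le\theta_1<\cdots<\theta_6<\pi$; the $5\times5$ minor of $[\phi(p_i)]$ omitting row $k$ equals $\prod_{i<j,\,i,j\ne k}\sin(\theta_j-\theta_i)>0$, so by Cramer the circuit coefficient $c_k$ has sign $(-1)^k$. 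The counting step then goes through exactly as you say. What your argument buys over the paper's is a uniform, computation-free treatment of all $(\bar p,\bar q)$ at once, and an explanation of why $(3,3)$ is the unique split admitting a positive kernel vector; what the paper's approach buys is that the formula~\eqref{eqn:left-kernel-C} is already available from \cref{lem:matrix-C}, so the check is mechanical.
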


\begin{proof}
By Farkas' lemma there exists a $y \geq 0, y \neq 0$ such that $y^T C = 0$ if and only if there does not exist an element $\underline{D} \in \R^9$ such that $C_{(\mathcal{A},\mathcal{B})}\underline{D} >0$. As all $\det(a_i,a_j) \neq 0, \det(b_i,b_j) \neq 0$ and $\langle a_i,b_j \rangle \neq 0$, it follows from~\Cref{lem:matrix-C} that $\rank(C_{(\mathcal{A},\mathcal{B})}) = 5$ and the left kernel of $C_{(\mathcal{A},\mathcal{B})}$ is spanned by the vector $v$ with coordinates given by~\eqref{eqn:left-kernel-C}. For $\bar{p}+\bar{q}=6$ and $\bar{p} \not \in \{0,3,6\}$, we have computationally checked that there are no non-zero nonnegative left kernel vectors. Hence the cone $P_{(\mathcal{A},\mathcal{B})}$ is full-dimensional.

If $\bar p=6$, then second coordinates of the rows of $C_{(\mathcal{A},\mathcal{B})}$ are of the form $a_{i1}^4$ and fourth coordinates are of the form $a_{i2}^4$. For a non-trivial conic combination of these vectors to be the zero vector, all the corresponding $a_{i1}$ and $a_{i2}$ would have to be zero. But this would mean that $\det(a_i,a_j)=0$ for all $j$ which is a contradiction to our assumptions. An analogous argument works for $\bar q = 6$. We conclude that the zero vector cannot be written as a non-trivial conic combination of the rows of $C_{(\mathcal{A},\mathcal{B})}$, hence the cone $P_{(\mathcal{A},\mathcal{B})}$ is full-dimensional.

Finally, if $\bar p + \bar q < 6$, then it follows from the previous cases that the cone $P_{(\mathcal{A},\mathcal{B})}$ is full-dimensional, because even after adding rank-one factors until there are in total of six rank-one factors, the resulting cone is full-dimensional. 
\end{proof}

All this brings us to our characterization of $1$- and $2$-infinitesimally rigid factorizations of a positive matrix $M \in \M_{3,2}^{p\times q}$.

\begin{theorem} \label{thm:no-orthogonal-pairs}
Let $M \in \M_{3,2}^{p \times q}$ and consider a size-$2$ psd factorization of $M$ given by the factors $A^{(1)}$, $\dots$, $A^{(p)}$, $B^{(1)}$, $\dots$, $B^{(q)} \in \mathcal{S}_+^2$. Let $A^{(1)}$, $\dots$, $A^{(\bar{p})}$, $B^{(1)}$, $\dots$, $B^{(\bar{q})}$  be the collection of the factors that have rank one. For these,  let $A^{(i)} = a_i a_i^T$ and $B^{(j)} = b_j b_j^T$ for $a_i, b_j \in \R^2$. Assume that all $\det(a_i,a_j) \neq 0, \det(b_i,b_j) \neq 0$ and $\langle a_i,b_j \rangle \neq 0$.
The factorization is $s$-infinitesimal rigid for $s\in \{1,2\}$ if and only if there exist  factors of rank one $A^{(i_1)},A^{(i_2)},A^{(i_3)},B^{(j_1)},B^{(j_2)},B^{(j_3)}$ such that 
\begin{equation}\label{eqn:Farkas_lemma_conditions_pxq}
\begin{split}    
\frac{\det(b_{j_1},b_{j_3})\det(b_{j_2},b_{j_3})\langle a_{i_2},b_{j_3} \rangle \langle a_{i_3},b_{j_3} \rangle}{\det(a_{i_1},a_{i_2})\det(a_{i_1},a_{i_3})\langle a_{i_1},b_{j_1} \rangle \langle a_{i_1},b_{j_2} \rangle} >0 \\
-\frac{\det(b_{j_1},b_{j_3})\det(b_{j_2},b_{j_3})\langle a_{i_1},b_{j_3} \rangle \langle a_{i_3},b_{j_3} \rangle}{\det(a_{i_1},a_{i_2})\det(a_{i_2},a_{i_3})\langle a_{i_2},b_{j_1} \rangle \langle a_{i_2},b_{j_2} \rangle} >0 \\
\frac{\det(b_{j_1},b_{j_3})\det(b_{j_2},b_{j_3})\langle a_{i_1},b_{j_3} \rangle \langle a_{i_2},b_{j_3} \rangle}{\det(a_{i_1},a_{i_3})\det(a_{i_2},a_{i_3})\langle a_{i_3},b_{j_1} \rangle \langle a_{i_3},b_{j_2} \rangle} >0\\
\frac{\det(b_{j_2},b_{j_3}) \langle a_{i_1},b_{j_3} \rangle \langle a_{i_2},b_{j_3} \rangle \langle a_{i_3},b_{j_3} \rangle}{\det(b_{j_1},b_{j_2})\langle a_{i_1},b_{j_1} \rangle \langle a_{i_2},b_{j_1} \rangle \langle a_{i_3},b_{j_1} \rangle} >0 \\
-\frac{\det(b_{j_1},b_{j_3}) \langle a_{i_1},b_{j_3} \rangle \langle a_{i_2},b_{j_3} \rangle \langle a_{i_3},b_{j_3} \rangle}{\det(b_{j_1},b_{j_2})\langle a_{i_1},b_{j_2} \rangle \langle a_{i_2},b_{j_2} \rangle \langle a_{i_3},b_{j_2} \rangle} >0.
\end{split}
\end{equation}
\end{theorem}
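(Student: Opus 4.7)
By \cref{lem:rank-2-factors-do-not-matter}, I may assume every factor has rank one, so the entire argument can be phrased in terms of the matrix $C_{(\A,\B)}$ of \cref{lem:matrix-C} and the cone $P_{(\A,\B)}$ of $1$-infinitesimal motions.

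For the ``if'' direction, pick a triple of rank-one factors satisfying \eqref{eqn:Farkas_lemma_conditions_pxq}. The pairwise linear independence of the $a_{i_k}$'s and $b_{j_l}$'s forces the rank-one vectorizations $\underline{a}^{(i_k)}$ and $\underline{b}^{(j_l)}$ to be linearly independent in $\R^3$, so the corresponding $3 \times 3$ submatrix of $M$ has rank $3$ and lies in $\M_{3,2}^{3\times 3}$. \Cref{thm:  3+3 no orthogonal pairs case} then yields $1$-infinitesimal rigidity of the sub-factorization; \cref{prop:1-inf-rigidity-implies-2-inf-rigidity} upgrades this to $2$-infinitesimal rigidity; and \cref{lem:sub-facorization-rigid} transfers both rigidities to the full factorization.

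For the ``only if'' direction I argue the contrapositive: assuming that no such triple exists, I aim to show that $P_{(\A,\B)}$ is full-dimensional, whence \cref{lem: full-dimensional cone implies inf.rig} gives $s$-infinitesimal flexibility for $s \in \{1,2\}$. Using Farkas' lemma in the Gordan form as in \cref{thm:  3+3 no orthogonal pairs case}, full-dimensionality is equivalent to the nonexistence of $y \ge 0$, $y \ne 0$, with $y^T C_{(\A,\B)} = 0$. Suppose for contradiction that such a $y$ exists. Applying a generic $\operatorname{GL}(2)$-change of basis first, which preserves the cone (by \cref{prop:inf-motion-rotation,cor:bijection_between_inf_motions}) and the signs of the ratios in \eqref{eqn:Farkas_lemma_conditions_pxq}, I may assume all coordinates of every $a_i$ and $b_j$ are nonzero. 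Since $\rank C_{(\A,\B)} \le 5$ by \cref{lem:matrix-C}(1), a Carath\'eodory-style reduction for nonnegative linear dependencies produces a minimal support $S'$ with $|S'| \le 6$ carrying a strictly positive $y'$ satisfying $(y')^T C_{S'} = 0$; inspecting the first column of $C_{(\A,\B)}$ forces both the $A$-part $S'_A$ and the $B$-part $S'_B$ of $S'$ to be nonempty.

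The plan concludes with a case analysis on $(|S'_A|,|S'_B|)$. If $|S'_A| \ne |S'_B|$, \cref{lem:4A_and_2B} applies directly to $S'$ and produces a full-dimensional cone, contradicting the existence of $y'$. If $|S'_A|=|S'_B| \in \{1,2\}$, I adjoin additional generic rank-one factors to the indices in $S'$ until the enlarged collection has six factors with unequal $A$- and $B$-counts; \cref{lem:4A_and_2B} then applies to the enlarged collection and gives a full-dimensional cone, and since adding constraints only shrinks the cone, $P_{S'}$ (defined by fewer constraints) is also full-dimensional, again a contradiction. This leaves only $|S'_A|=|S'_B|=3$, in which case \cref{lem:matrix-C}(4) gives $\rank(C_{S'}) = 5$, so the left kernel of $C_{S'}$ is one-dimensional and is generated both by the explicit vector \eqref{eqn:left-kernel-C} and by the strictly positive $y'$; the two generators are therefore positive scalar multiples of one another, forcing the five ratios in \eqref{eqn:Farkas_lemma_conditions_pxq} to be strictly positive for the triple indexed by $S'$, contradicting the standing assumption. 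The main obstacle in executing this plan is this case analysis, and in particular the careful use of the extension trick from the proof of \cref{lem:4A_and_2B} to eliminate $|S'_A|=|S'_B| \in \{1,2\}$, since those sub-cases are not covered by the plain statement of that lemma.
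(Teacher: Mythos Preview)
Your proof is correct and follows essentially the same approach as the paper: the ``if'' direction via \cref{thm:  3+3 no orthogonal pairs case}, \cref{prop:1-inf-rigidity-implies-2-inf-rigidity}, and \cref{lem:sub-facorization-rigid}, and the ``only if'' direction via a Farkas/Carath\'eodory reduction to at most six rows followed by the same case split between \cref{thm:  3+3 no orthogonal pairs case} and \cref{lem:4A_and_2B}. Your explicit handling of the $|S'_A|=|S'_B|\in\{1,2\}$ sub-cases via the extension trick is a small tightening, since the paper invokes \cref{lem:4A_and_2B} for ``all other cases'' even though that lemma's hypothesis $\bar p\neq\bar q$ formally excludes equal counts (its proof, however, contains exactly the extension idea you spell out).
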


\begin{proof}
If the inequalities \eqref{eqn:Farkas_lemma_conditions_pxq} are satisfied, then by~\Cref{lem:sub-facorization-rigid} and~\Cref{thm:  3+3 no orthogonal pairs case} the psd factorization is $1$-infinitesimally rigid. By~\cref{prop:1-inf-rigidity-implies-2-inf-rigidity}, the psd factorization is $2$-infinitesimally rigid.

Assume that there do not exist factors of rank one $A^{(i_1)}, A^{(i_2)}, A^{(i_3)}, B^{(j_1)}, A^{(j_2)}, A^{(j_3)}$ such that the inequalities \eqref{eqn:Farkas_lemma_conditions_pxq} are satisfied. Consider the $(\bar{p} + \bar{q}) \times 9$ matrix $C_{(\mathcal{A},\mathcal{B})}$ as in~\Cref{lem:matrix-C}. We will show that the cone $P_{(\mathcal{A},\mathcal{B})}$ is full-dimensional and then by~\Cref{lem: full-dimensional cone implies inf.rig} the psd factorization is  $1$- or $2$-inifinitesimally flexible.
Let $C'$ be the $(\bar{p} + \bar{q}) \times 5$ submatrix of $C_{(\mathcal{A},\mathcal{B})}$ where only the linearly independent columns $1,2,3,4,6$ of $C_{(\mathcal{A},\mathcal{B})}$ are kept. The cone $P_{(\mathcal{A},\mathcal{B})}$ is full-dimensional if and only if the cone $P_{(\mathcal{A},\mathcal{B})}' = \{\underline{D'} \in \R^5 \mid C' \underline{D'} \geq 0 \}$ is full-dimensional. 
 
We will show that the cone $P_{(\mathcal{A},\mathcal{B})}'$ is full-dimensional. By contradiction, assume that the cone $P_{(\mathcal{A},\mathcal{B})}'$ is not full-dimensional. This means that $(0,0,0,0,0)$ can be written as a non-trivial conic combination of the rows of the matrix $C'$. By Caratheodory’s theorem, the zero vector can be written as a non-trivial conic combination of at most six rows of the matrix $C'$.

We will show that no matter how many of the at most six rows correspond to $A$ and $B$ factors, it is not possible to write the zero vector as a non-trivial conic combination of these rows. If three rows correspond to $A$ factors and three rows correspond to $B$ factors, then their non-trivial conic combination cannot give the zero vector because the cone corresponding to these six factors is full-dimensional by~\Cref{thm:  3+3 no orthogonal pairs case} since the inequalities \eqref{eqn:Farkas_lemma_conditions_pxq} are not satisfied. In all other cases, the cone $P_{(\mathcal{A},\mathcal{B})}$ is full-dimensional by~\Cref{lem:4A_and_2B}. The factorization is neither $1$-infinitesimally rigid nor $2$-infinitesimally rigid by~\Cref{lem: full-dimensional cone implies inf.rig} and~\Cref{lem:rank-2-factors-do-not-matter}.
\end{proof}

\subsection{One orthogonal pair} \label{section:one_orthogonal_pair}

In the subsequent subsections we consider $2$-infinitesimal motions of matrices with zero entries.
This subsection is devoted to the case when precisely one of the factors $A^{(i)}$ of rank one is orthogonal to a factor $B^{(j)}$ of rank one .
We give a complete characterization of the $2$-infinitesimal motions of size-$2$ psd factorizations of matrices with precisely one zero entry.

\begin{lemma} \label{lem:matrix-C2}
Let $A^{(2)}$, $\dots$, $A^{(\bar{p})}$, $B^{(2)}$, $\dots$, $B^{(\bar{q})} \in \mathcal{S}^2_+$ be matrices of rank one, where $A^{(i)} = a_i a_i^T$ and $B^{(j)} = b_j b_j^T$ for $a_i, b_j \in \R^2$. Define
    \begin{align*}
        \alpha_i^A & := (a_{i1}^2 a_{i2}^2,  a_{i2}^4, a_{i1}^2 a_{i2}^2, -\sqrt{2} a_{i1} a_{i2}^3, \sqrt{2} a_{i1} a_{i2}^3, -2 a_{i1}^2 a_{i2}^2), \\
        \alpha_j^B & := (-b_{j1}^2 b_{j2}^2,  -b_{j1}^4, -b_{j1}^2 b_{j2}^2, -\sqrt{2} b_{j1}^3 b_{j2}, \sqrt{2} b_{j1}^3 b_{j2}, 2 b_{j1}^2 b_{j2}^2).        
    \end{align*}
Consider the $(\bar{p}+\bar{q}-2) \times 6$ matrix $\bar{C}_{(\mathcal{A},\mathcal{B})}$ where we label the rows by $a_2,\ldots,a_{\bar p},b_2,\ldots,b_{\bar q}$
\begin{align} \label{eqn:matrix-C2}
    \bar{C}_{(\mathcal{A},\mathcal{B})} := \begin{pmatrix}
        \rowvec{\alpha_{2}^A} \\
        \vdots \\
        \rowvec{\alpha_{\bar{p}}^A}\\
        \rowvec{\alpha_{2}^B}\\
        \vdots \\
        \rowvec{\alpha_{\bar{q}}^B}
    \end{pmatrix}.
\end{align}

\begin{enumerate}
\item The rank of $\bar{C}_{(\mathcal{A},\mathcal{B})}$ is at most three. 

\item The right kernel of $\bar{C}_{(\mathcal{A},\mathcal{B})}$ contains the subspace $\bar{L} := \operatorname{span}(w_1, w_2, w_3)$, where
\begin{align*}
    w_1 &= (2, 0, 0, 0, 0, 1),\\
    w_2 &= (0, 0, 2, 0, 0, 1),\\
    w_3 &= (0, 0, 0, 1, 1, 0).
\end{align*}
If $\rank(\bar{C}_{(\mathcal{A},\mathcal{B})}) = 3$, then the right kernel is equal to $\bar{L}$.  

\item Denote $a_1:=(1,0)$ and $b_1:=(0,1)$. If $\bar{p}+\bar{q}=6$, then the left kernel of $\bar{C}_{(\mathcal{A},\mathcal{B})}$ contains the vector $v \in \R^{\bar{p}+\bar{q}-2}$ with entries 
\begin{equation} \label{eqn:left-kernel-C2} 
    v_{c_k} := 
    (-1)^{k + \mathbbm{1}_{k \geq \bar{p}+1}} \prod_{\substack{1 \leq i < j \leq \bar{p},\\a_i,a_j \neq c_k}} \det(a_i,a_j) \prod_{\substack{1 \leq i < j \leq \bar{q},\\b_i,b_j \neq c_k}} \det(b_i,b_j) \prod_{\substack{1 \leq i \leq \bar{p},\\ 1 \leq j \leq \bar{q},\\i+j>2,\\a_i,b_j \neq c_k}} \langle a_i, b_j \rangle.
\end{equation}
where $c_2=a_2,\ldots,c_{\bar{p}}=a_{\bar{p}},c_{\bar{p}+2}=b_2,\ldots,c_{\bar{p}+\bar{q}}=b_{\bar{q}}$.
If $\rank(\bar{C}_{(\mathcal{A}, \mathcal{B})}) = 3$, then the left kernel is spanned by $v$. 

\item If all $a_{i2} \neq 0, b_{i1} \neq 0, \det(a_i,a_j) \neq 0, \det(b_i,b_j) \neq 0$ and $\langle a_i,b_j \rangle \neq 0$, then $\rank(\bar{C}_{(\mathcal{A}, \mathcal{B})}) = 3$.
\end{enumerate}
\end{lemma}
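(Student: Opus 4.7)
The plan is to mirror the proof of Lemma \ref{lem:matrix-C} closely, since $\bar C_{(\A,\B)}$ is the six-column matrix obtained after dropping those contributions that vanish on the orthogonal pair $A^{(1)} = a_1 a_1^T$, $B^{(1)} = b_1 b_1^T$ with $a_1 = (1,0)$, $b_1 = (0,1)$.

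For parts~(1) and~(2), I would inspect the six columns of $\bar C_{(\A,\B)}$ directly: columns $1$ and $3$ are identical (both equal $a_{i1}^2 a_{i2}^2$ on the A-rows and $-b_{j1}^2 b_{j2}^2$ on the B-rows), column $6$ equals $-2$ times column $1$, and column $5$ equals $-1$ times column $4$. Hence the column span is generated by columns $1$, $2$, and $4$, giving $\rank(\bar C_{(\A,\B)}) \leq 3$. Direct substitution shows that each $w_\ell$ encodes one of these column relations ($w_1$ encodes $2\,\mathrm{col}_1 + \mathrm{col}_6 = 0$, $w_2$ encodes $2\,\mathrm{col}_3 + \mathrm{col}_6 = 0$, $w_3$ encodes $\mathrm{col}_4 + \mathrm{col}_5 = 0$), so $w_1, w_2, w_3 \in \ker \bar C_{(\A,\B)}$. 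They are linearly independent because their restrictions to coordinates $1, 3, 4$ form the nonsingular diagonal matrix $\mathrm{diag}(2,2,1)$, and when $\rank \bar C_{(\A,\B)} = 3$ the right kernel has dimension $6-3=3$, forcing equality with $\bar L$.

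For part~(3), assume $\bar p + \bar q = 6$, so $\bar C_{(\A,\B)}$ is $4 \times 6$ of rank at most $3$ and the left kernel has dimension at least one. Restricting to the linearly independent columns $1, 2, 4$ yields a $4 \times 3$ submatrix $\bar C'$, whose left kernel is generated by the alternating sum of the four $3 \times 3$ minors obtained by deleting one row at a time; this vector is automatically also a left kernel vector of $\bar C_{(\A,\B)}$. To identify it with \eqref{eqn:left-kernel-C2} up to a common scalar, I would perform a direct computational verification in each case $(\bar p, \bar q) \in \{(1,5), (2,4), (3,3), (4,2), (5,1)\}$, exactly as in Lemma \ref{lem:matrix-C}. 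When $\rank \bar C_{(\A,\B)} = 3$, the left kernel is one-dimensional and is therefore spanned by $v$.

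For part~(4), I would exhibit an explicit non-vanishing $3 \times 3$ minor of $\bar C_{(\A,\B)}$. Selecting two A-rows $a_i, a_j$ and one B-row $b_k$ (or, symmetrically, two B-rows and one A-row, or three A- or three B-rows, depending on which types of factors are present) and restricting to columns $1, 2, 4$, I would factor $a_{i2}^2$, $a_{j2}^2$ and $-b_{k1}^2$ out of the respective rows. The remaining $3 \times 3$ determinant is a Veronese-type determinant in the vectors $\vec(\tilde a_i \tilde a_i^T), \vec(\tilde a_j \tilde a_j^T), \vec(w_k w_k^T)$ with $\tilde a_i = (a_{i1}, -a_{i2})$ and $w_k = (b_{k2}, b_{k1})$, which evaluates, up to a nonzero universal constant, to
\[
a_{i2}^2\, a_{j2}^2\, b_{k1}^2\, \det(a_i, a_j)\, \langle a_i, b_k\rangle\, \langle a_j, b_k\rangle,
\]
and this is nonzero by the hypotheses. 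Together with (1) this forces $\rank \bar C_{(\A,\B)} = 3$. The main obstacle I expect is the case-by-case verification in (3): as in Lemma \ref{lem:matrix-C}, collapsing the alternating sum of minors into the symmetric product formula \eqref{eqn:left-kernel-C2} does not appear to reduce to a single clean combinatorial identity, and must be checked directly across the splittings of $\bar p + \bar q = 6$.
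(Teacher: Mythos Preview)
Your proposal is correct and follows essentially the same route as the paper. The only cosmetic differences are that the paper uses columns $1,2,5$ rather than your $1,2,4$ (immaterial, since column $5$ is the negative of column $4$), and for part~(4) the paper simply observes that the entries of the left-kernel vector in~(3) are exactly the $3\times 3$ minors of $\bar C'$, so nonvanishing of all the factors in~\eqref{eqn:left-kernel-C2} forces some minor to be nonzero; your explicit Veronese-type computation of a single minor is a slightly more hands-on alternative that reaches the same conclusion.
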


\begin{proof}
We have $\rank(\bar{C}_{(\mathcal{A}, \mathcal{B})}) \leq 3$ since the columns $1,3,6$ are multiples of each other, as are $4,5$. It can be easily checked from these observations that the right kernel contains the subspace $\bar{L}$. If $\rank(\bar{C}_{(\mathcal{A}, \mathcal{B})}) = 3$, then $\ker \bar{C}_{(\mathcal{A}, \mathcal{B})}$ is $3$-dimensional, and since $\bar{L}$ is 3-dimensional, then $\bar{L} = \ker \bar{C}_{(\mathcal{A}, \mathcal{B})}$.

Finally assume that $\bar{p}+\bar{q}=6$, and consider the $4 \times 3$ submatrix $C'$ consisting of the columns $1,  2$ and $5$ of $\bar{C}_{(\mathcal{A},\mathcal{B})}$. The vector $(C'_2,-C'_3, C'_5, -C'_6)$ where $C'_i$ is the $3 \times 3$ minor of $C'$ obtained by deleting row labeled by $i$ is in the left kernel of $C'$ and hence of $\bar{C}_{(\mathcal{A},\mathcal{B})}$. For all possible cases of $\bar{p}$ and $\bar{q}$ one can computationally check that this vector is  equal to the vector given by~\eqref{eqn:left-kernel-C2} scaled by $-\sqrt{2}$.
If none of the $a_{i2}, b_{i1}, \det(a_i,a_j),\det(b_i,b_j)$ and $\langle a_i, b_j \rangle$ are zero, then none of the $3 \times 3$ minors vanish and hence rank of $\bar{C}_{(\mathcal{A},\mathcal{B})}$ is three (note that $\det(a_1,a_i)=\langle a_i,b_1 \rangle= a_{i2}$ and $\det(b_i,b_1)=\langle a_1,b_i \rangle = b_{i1}$ for $a_1=(1,0)$ and $b_1=(0,1)$).
\end{proof}

\begin{remark}
The expression~\eqref{eqn:left-kernel-C2} is almost the same as~\eqref{eqn:left-kernel-C} only with the factor $\langle a_1,b_1 \rangle$ removed. 
\end{remark}

\begin{proposition}\label{prop:one-orth}
Let $M\in \M_{3,2}^{3\times 3}$ and consider a size-$2$ psd factorization of $M$ given by factors $A^{(1)},A^{(2)},A^{(3)},B^{(1)},B^{(2)},B^{(3)}$ of rank one where $A^{(i)} = a_ia_i^T$ and $B^{(j)} = b_jb_j^T$, and $\langle a_1,b_1 \rangle =0$. Assume $\det(a_i,a_j) \neq 0, \det(b_i,b_j) \neq 0$ for all $i,j$ and $\langle a_i,b_j \rangle \neq 0$ for $i,j \geq 2$.
Then the factorization is $2$-infinitesimally rigid if and only if 
    \begin{equation}\label{eqn:4 factors}
\begin{split}
-\frac{\det(b_1,b_3) \det(b_2,b_3) \langle a_1,b_3 \rangle \langle a_3,b_3 \rangle}{\det(a_1,a_2) \det(a_2,a_3) \langle a_2,b_1 \rangle \langle a_2,b_2 \rangle} >0, \\
\frac{\det(b_1,b_3) \det(b_2,b_3) \langle a_1,b_3 \rangle \langle a_2,b_3 \rangle}{\det(a_1,a_3) \det(a_2,a_3) \langle a_3,b_1 \rangle \langle a_3,b_2 \rangle} >0,\\
-\frac{\det(b_1,b_3) \langle a_1,b_3 \rangle\langle a_2,b_3 \rangle \langle a_3,b_3 \rangle}{\det(b_1,b_2) \langle a_1,b_2 \rangle \langle a_2,b_2 \rangle \langle a_3,b_2 \rangle} >0.
\end{split}
    \end{equation}
\end{proposition}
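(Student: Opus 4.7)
The plan is to reduce the problem to a Farkas-style argument on the matrix $\bar C_{(\mathcal{A},\mathcal{B})}$ of \cref{lem:matrix-C2}, in direct analogy with the strategy used in \cref{thm:  3+3 no orthogonal pairs case} combined with the $\beta$-analysis of \cref{prop:1-inf-rigidity-implies-2-inf-rigidity}. First I would apply an appropriate $\operatorname{GL}(2)$-action so that $a_1 = (1,0)$ and $b_1 = (0,1)$; by \cref{lemma:rotation-gives-psd-factorization} and \cref{cor:bijection_between_inf_motions} this preserves $2$-infinitesimal rigidity and fixes the space of $2$-trivial motions. In this normalization $A^{(1)} = \mathrm{diag}(1,0)$ and $B^{(1)} = \mathrm{diag}(0,1)$, so the $1 \times 1$ minor conditions from the zero diagonal entries give $tD_{12} \geq 0$ and $-tD_{12} \geq 0$, hence $D_{12} = 0$. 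The determinants then reduce to $\det(A^{(1)}+t\dot A^{(1)}) = -\tfrac12 t^2 D_{13}^2$ and $\det(B^{(1)}+t\dot B^{(1)}) = -\tfrac12 t^2 D_{32}^2$, so the $2$-infinitesimal condition forces $D_{13} = D_{32} = 0$ as well.

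Under the reduction $D_{12} = D_{13} = D_{32} = 0$, the $\alpha_i^A, \alpha_j^B$ for $i,j \geq 2$ depend only on the remaining six coordinates $\underline D^{\mathrm{red}} = (D_{11}, D_{21}, D_{22}, D_{23}, D_{31}, D_{33})$, and the corresponding $1$-infinitesimal motion conditions in this reduced space are exactly $\bar C_{(\mathcal{A},\mathcal{B})}\, \underline D^{\mathrm{red}} \geq 0$ with $\bar C_{(\mathcal{A},\mathcal{B})}$ the $4 \times 6$ matrix of \cref{lem:matrix-C2}. By part~(3) of \cref{lem:matrix-C2} the left kernel of $\bar C_{(\mathcal{A},\mathcal{B})}$ is spanned by the vector $v$ of \eqref{eqn:left-kernel-C2}, whose four entries are all nonzero under the hypotheses. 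A direct computation shows that the three inequalities of \eqref{eqn:4 factors} equal the ratios $v_{a_2}/v_{b_3}$, $v_{a_3}/v_{b_3}$, and $v_{b_2}/v_{b_3}$, so \eqref{eqn:4 factors} holds if and only if all entries of $v$ share the same sign.

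For the direction where \eqref{eqn:4 factors} is assumed, applying \cref{lem: no orthogonal pairs positive entries make alphas zero} to $\bar C_{(\mathcal{A},\mathcal{B})}$ yields that every $\underline D^{\mathrm{red}}$ satisfying $\bar C_{(\mathcal{A},\mathcal{B})} \underline D^{\mathrm{red}} \geq 0$ already satisfies $\bar C_{(\mathcal{A},\mathcal{B})} \underline D^{\mathrm{red}} = 0$; hence $\underline D^{\mathrm{red}} \in \bar L$ and can be written as $(2r_1, 0, 2r_2, r_3, r_3, r_1+r_2)$ for some $r_1, r_2, r_3$. Since $\alpha_i^A(D) = \alpha_j^B(D) = 0$ for all $i,j \geq 2$, combining the $2$-infinitesimal condition with \cref{prop:zero-alpha-forces-beta} forces $\beta_i^A(D) = \beta_j^B(D) = 0$. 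A direct calculation of these quadratic forms gives
\begin{equation*}
\beta_i^A(D) = -\tfrac{1}{2}\, a_{i2}^2\bigl(\sqrt{2}(r_1 - r_2) a_{i1} + r_3 a_{i2}\bigr)^2, \qquad \beta_j^B(D) = -\tfrac{1}{2}\, b_{j1}^2 \bigl(\sqrt{2}(r_1 - r_2) b_{j2} - r_3 b_{j1}\bigr)^2,
\end{equation*}
so vanishing for $i,j = 2,3$ yields a $4 \times 2$ homogeneous system in $(r_1 - r_2, r_3)$ whose top $2 \times 2$ block has determinant $\sqrt{2}\det(a_2, a_3) \neq 0$. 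Hence $r_1 = r_2$ and $r_3 = 0$, so $D$ is a scalar multiple of the identity, and the factorization is $2$-infinitesimally rigid.

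Conversely, if \eqref{eqn:4 factors} fails then the nonzero entries of $v$ have mixed signs, so by Farkas' lemma there exists $\underline D^{\mathrm{red}}$ with $\bar C_{(\mathcal{A},\mathcal{B})} \underline D^{\mathrm{red}} > 0$; lifting to $D \in \R^{3 \times 3}$ by setting $D_{12} = D_{13} = D_{32} = 0$ produces a $2$-infinitesimal motion (the $A^{(1)}, B^{(1)}$ conditions vanish identically, the strict inequalities $\alpha_i^A(D), \alpha_j^B(D) > 0$ guarantee the $2 \times 2$ minor conditions for small $t > 0$, and the $1 \times 1$ minor conditions hold automatically under the genericity assumptions), and since $\bar C_{(\mathcal{A},\mathcal{B})} \underline D^{\mathrm{red}} \neq 0$, this $D$ is not $2$-trivial. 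The principal technical step will be verifying the explicit $\beta$-formulas and confirming the rank of the $4\times 2$ system; both are routine given $\det(a_2,a_3) \neq 0$, making the main conceptual ingredient the initial reduction that eliminates $D_{12}, D_{13}, D_{32}$ via the orthogonal pair $A^{(1)}, B^{(1)}$.
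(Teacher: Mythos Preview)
Your proposal is correct and follows essentially the same route as the paper's proof: normalize the orthogonal pair, use the diagonal and determinant constraints of $A^{(1)},B^{(1)}$ to force $D_{12}=D_{13}=D_{32}=0$, reduce to $\bar C_{(\mathcal{A},\mathcal{B})}$, apply Farkas/\cref{lem: no orthogonal pairs positive entries make alphas zero} to land in $\bar L$, and finish with the $\beta$-analysis yielding the $4\times 2$ system with nonzero $\sqrt{2}\det(a_2,a_3)$ minor. The only point you leave implicit that the paper spells out is the final step verifying that the three inequalities \eqref{eqn:4 factors} are invariant under the normalizing transformation (the paper uses $O(2)$ and checks that determinants appear in even-sign combinations); this is routine and also holds for your $\operatorname{GL}(2)$ normalization since $\langle\tilde a_i,\tilde b_j\rangle=\langle a_i,b_j\rangle$ while $\det(\tilde a_i,\tilde a_j)$ and $\det(\tilde b_i,\tilde b_j)$ scale by $\det(S)^{\pm 1}$, but you should state it.
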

\begin{proof}
First we prove the claim for $a_1 = (a_{11}, 0)$ and $b_1 = (0,b_{12})$.
Consider a $2$-infinitesimal motion of the factorization given by the factors $A^{(1)}(t)$, $A^{(2)}(t)$, $A^{(3)}(t)$, $B^{(1)}(t)$, $B^{(2)}(t)$, $B^{(3)}(t)$ corresponding to the matrix $D$.
In order for this factorization to be psd, we need 1-minors (diagonals) and 2-minors (determinants) of $A^{(i)}+ t\dot A^{(i)}$ and $B^{(j)}+ t\dot B^{(j)}$ for all $i=1,\ldots,3$ and $j=1,\ldots,3$ to be nonnegative. We examine the diagonals of the factors $A^{(1)}(t)$ and $B^{(1)}(t)$ to conclude $D_{12}=0$: 
\begin{align*}
A^{(1)}+ t\dot A^{(1)}= \left(
\begin{array}{cc}
a_{11}^2 D_{11} t+a_{11}^2 & \frac{a_{11}^2 D_{13} t}{\sqrt{2}} \\
\frac{a_{11}^2 D_{13} t}{\sqrt{2}} & a_{11}^2 D_{12} t \\
\end{array}
\right) 
\\
B^{(1)} + t\dot B^{(1)}= \left(
\begin{array}{cc}
-b_{12}^2 D_{12}t & -\frac{b_{12}^2 D_{32} t}{\sqrt{2}} \\
-\frac{b_{12}^2 D_{32} t}{\sqrt{2}} & b_{12}^2-b_{12}^2 D_{22} t \\
\end{array}
\right)
\end{align*} 
We then substitute $D_{12}=0$ into the following determinants:
\begin{align*}
   \text{det}(A^{(1)}+ t\dot A^{(1)}) &= a_{11}^4 D_{12} t + \left(a_{11}^4 D_{11} D_{12}-\frac{1}{2} a_{11}^4 D_{13}^2\right)t^2 = -\frac{1}{2} a_{11}^4 D_{13}^2 t^2\\
   \text{det}(B^{(1)}+ t\dot B^{(1)}) &= -b_{12}^4 D_{12} t + \left(b_{12}^4 D_{12} D_{22}-\frac{1}{2} b_{12}^4 D_{32}^2\right)t^2 = -\frac{1}{2} b_{12}^4 D_{32}^2t^2 
\end{align*}
Since we require that the determinants are nonnegative, we can also conclude that $D_{13}=D_{32}=0$. Then $\text{det}(A^{(1)}+ t\dot A^{(1)})= \text{det}(B^{(1)}+ t\dot B^{(1)}) = 0$. Therefore $D_{12} = D_{13} = D_{32} = 0$ for any $D$ corresponding to a $2$-infinitesimal motion.

Consider the cone $\bar{P}_{(\mathcal{A},\mathcal{B})} = \{\underline{D} \in \R^6 \mid \bar{C}_{(\mathcal{A},\mathcal{B})} \underline{D} \geq 0 \}$ for the matrix $\bar{C}_{(\mathcal{A},\mathcal{B})}$ in~\eqref{eqn:matrix-C2}. This matrix is obtained from matrix~\eqref{eqn:matrix-C} by removing columns corresponding to $D_{12},D_{13},D_{32}$. 
By Farkas' lemma there exists $y \geq 0, y \neq 0$ such that $y^T \bar{C}_{(\mathcal{A},\mathcal{B})} = 0$ if and only if there does not exist an element $\underline{D} \in \R^6$ such that $\bar{C}_{(\mathcal{A},\mathcal{B})}\underline{D} >0$.
Since $a_1 = (a_{11}, 0)$ and $b_1 = (0,b_{12})$, then the assumptions $\det(a_1,a_i) \neq 0, \det(b_1,b_i) \neq 0$ imply $a_{i2} \neq 0, b_{i1} \neq 0$ for $i \geq 2$.
As now $a_{i2} \neq 0, b_{i1} \neq 0, \det(a_i,a_j) \neq 0, \det(b_i,b_j) \neq 0$ and $\langle a_i,b_j \rangle \neq 0$ for $i,j \geq 2$, we have $\rank(\bar{C}_{(\mathcal{A}, \mathcal{B})}) = 3$ by~\Cref{lem:matrix-C2}. 
The matrix $\bar{C}_{(\mathcal{A},\mathcal{B})}$ has a $1$-dimensional left kernel, whose generator is given by~\eqref{eqn:left-kernel-C2}.
Since all factors of all coordinates of~\eqref{eqn:left-kernel-C2} are non-zero, we can divide all coordinates of~\eqref{eqn:left-kernel-C2} by the last coordinate resulting in coordinates appearing in~\eqref{eqn:4 factors} and one as the last coordinate. Thus having $y \geq 0, y \neq 0$ such that $y^T \bar{C}_{(\mathcal{A}, \mathcal{B})} = 0$ is equivalent to the inequalities~\eqref{eqn:4 factors} being satisfied.

First assume that \eqref{eqn:4 factors} holds.
Then, all of the entries of the generator $y$ are strictly positive.
By Farkas' lemma, there does not exist an $\underline{D} \in \R^6$ such that $\bar{C}_{(\mathcal{A},\mathcal{B})} \underline{D} > 0$.
\cref{lem: no orthogonal pairs positive entries make alphas zero} shows that then for all $\underline{D} \in \bar{P}_{(\mathcal{A},\mathcal{B})}$ we have $\bar{C}_{(\mathcal{A},\mathcal{B})} \underline{D} = 0$,
and hence $\bar{P}_{(\mathcal{A},\mathcal{B})}$ is contained in the right kernel of $\bar{C}_{(\mathcal{A},\mathcal{B})}$.
Thus we consider motions in the kernel of $\bar{C}_{(\mathcal{A},\mathcal{B})}$. 
By~\Cref{lem:matrix-C2}, a basis for the right kernel of $\bar{C}_{(\mathcal{A},\mathcal{B})}$ is given by the vectors 
\begin{align*}
    w_1 &= (2, 0, 0, 0, 0, 1),\\
    w_2 &= (0, 0, 2, 0, 0, 1),\\
    w_3 &= (0, 0, 0, 1, 1, 0).
\end{align*}
By adding zeroes for the coordinates corresponding to $D_{12},D_{13},D_{32}$, we get 
\begin{align*}
    v_1 &= (2, 0, 0, 0, 0, 0, 0, 0, 1),\\
    v_2 &= (0, 0, 0, 0, 0, 0, 0, 0, 1),\\
    v_3 &= (0, 0, 0, 0, 0, 0, 1, 1, 0).
\end{align*}
The motion $\underline{D}$ can be written as a linear combination of the form $\sum_{i = 1}^3 r_i v_i$, where $r_1, r_2, r_3 \in \R$. It is of the form
\begin{equation*}
    \underline{D} =(2 r_1, 0, 0, 0, 2 r_2, r_3, r_3, 0, r_1 + r_2).
\end{equation*}

A $2$-infinitesimal motion needs to satisfy 
    \begin{equation}\label{eqn: 1 orth pair remaining betas}
        \begin{split}
           \beta_2^A(D) t^2 &= - \frac{1}{2} a_{22}^2 (\sqrt{2}a_{21} (r_1-r_2) +a_{22}r_3)^2 t^2 \geq 0, \\
            \beta_3^A(D)t^2 &= - \frac{1}{2} a_{32}^2 (\sqrt{2}a_{31} (r_1-r_2) +a_{32}r_3)^2 t^2 \geq 0, \\
            \beta_2^B(D)t^2 &= - \frac{1}{2} b_{21}^2 (\sqrt{2}b_{22} (r_1-r_2)  - b_{21}r_3)^2 t^2 \geq 0, \\
            \beta_3^B(D)t^2 &= - \frac{1}{2} b_{31}^2 (\sqrt{2}b_{32} (r_1-r_2)  - b_{31}r_3)^2 t^2 \geq 0. \\
        \end{split}
    \end{equation}
From \eqref{eqn: 1 orth pair remaining betas}, we get the system of equations
\begin{equation} \label{eqn: 1 orth pair system of ineqs}
    \begin{pmatrix}
            \sqrt{2}a_{21} & a_{22} \\
            \sqrt{2}a_{31} & a_{32} \\
            \sqrt{2}b_{22} & -b_{21} \\
            \sqrt{2}b_{32} & -b_{31}
        \end{pmatrix} \begin{pmatrix}
            r_1-r_2 \\
            r_3
        \end{pmatrix} = 0.
\end{equation}
The matrix in \eqref{eqn: 1 orth pair system of ineqs} has rank $2$, since its submatrix $S = \begin{pmatrix}
    \sqrt{2}a_{21} & a_{22} \\
            \sqrt{2}a_{31} & a_{32}
\end{pmatrix}$
has determinant $\det(S) = \sqrt{2} \det(a_2, a_3)$ where $\det(a_2, a_3) \neq 0$ by assumption.
Hence, the only possible solution to the system \eqref{eqn: 1 orth pair remaining betas} is $r_1=r_2$ and $r_3 = 0$, and the only $2$-infinitesimal motions correspond to 
\begin{equation*}
    \underline{D} = (2 r_1, 0, 0, 0, 2 r_1, 0, 0, 0, 2 r_1), 
\end{equation*}
i.e., the factorization is $2$-infinitesimally rigid.

If \eqref{eqn:4 factors} does not hold, then  every nonzero $y$ satisfying $y^T \bar{C}_{(\mathcal{A},\mathcal{B})} = 0$ has  both positive and negative entries. 
By Farkas' lemma there must exist an element $\underline{D} \in \R^6$ such that $\bar{C}_{(\mathcal{A},\mathcal{B})}\underline{D} >0$.
Recall that by \cref{rmk: rank-1 factors determinant form} the determinants of the matrices $A^{(i)} + t \dot A^{(i)}$ and $B^{(j)}+ t \dot B^{(j)}$ are of the form $\alpha(D)t + \beta(D) t^2$. 
Under the assumption $D_{12} = D_{13} = D_{32} = 0$, we have $\alpha^A_1(D)t + \beta_1^A(D)t^2 = \alpha^B_1 (D)t + \beta_1^B(D)t^2 = 0$, and the inequality $\bar{C}_{(\mathcal{A},\mathcal{B})}\underline{D} > 0$ is equivalent to $\alpha^A_i (D) >0$ and $\alpha^B_j (D) >0$ for all $i,j \geq 2$ by the definition of $\bar{C}_{(\mathcal{A},\mathcal{B})}$.
It follows that $\alpha^A_i(D)t + \beta^A_i(D) t^2 \geq 0$ and $\alpha^B_j(D)t + \beta^B_j(D) t^2 \geq 0$ for all $t \in [0, \varepsilon)$ for $\varepsilon > 0$ small enough. 
Therefore $\underline{D}$ corresponds to a $2$-infinitesimal motion of the psd factorization. 
It is not a scalar multiple of a $2$-trivial motion because a $2$-trivial motion is in the right kernel of the matrix $\bar{C}_{(\mathcal{A},\mathcal{B})}$ by~\Cref{lem:matrix-C2} but $\underline{D}$ is not. Hence $D$ is a non-trivial $2$-infinitesimal motion and the psd factorization is $2$-infinitesimally flexible. This proves the proposition for the special case when $a_1 = (a_{11}, 0)$ and $b_1 = (0,b_{12})$.

In the general case,  by \cref{lemma:rotation-gives-psd-factorization}, we can find an orthogonal matrix $S \in O(2) \subset \mathrm{GL}(2)$, such that 
        \begin{align*}
            {\tilde a}_1 = S^Ta_1 = \begin{pmatrix}
                1 \\ 0
            \end{pmatrix} \mbox{ and }
            {\tilde b}_1 = S^{-1} b_1 = \begin{pmatrix}
                0 \\ 1
            \end{pmatrix}.
        \end{align*}
Then $\tilde A^{(i)} = S^T A^{(i)} S, \ \tilde B^{(j)} = S^{-1} B^{(j)} S^{-T}$ is a psd factorization of $M$. Moreover, there is a linear isomorphism between the $2$-infinitesimal motions of $A^{(1)},\ldots,B^{(3)}$ and the $2$-infinitesimal motions of $\tilde A^{(1)},\ldots, \tilde B^{(3)}$  by~\cref{cor:bijection_between_inf_motions}. In fact, this isomorphism restricts to identity in the case of $2$-infinitesimally rigid factorizations. 
Hence the psd factorization $A^{(1)},\ldots,B^{(3)}$ is $2$-infinitesimally rigid if and only if the psd factorization $\tilde A^{(1)},\ldots, \tilde B^{(3)}$ is $2$-infinitesimally rigid.

Finally, we note that the inequalities \eqref{eqn:4 factors} hold for $a_i,b_j$ if and only if they hold for $\tilde a_i, \tilde b_j$. Dot products are invariant under the elements of $O(2)$. Determinants can change sign, but if one determinant changes sign, then all determinants change signs, and since each of the inequalities has even number of determinants, then the sign of the inequality does not change. This completes the proof.
\end{proof}

\begin{lemma} \label{lem:one-orth-pair-six-factors}
     Let $A^{(1)},\dots , A^{(\bar{p})},B^{(1)}, \dots, B^{(\bar{q})} \in \mathcal{S}_+^2$, $\bar{p} + \bar{q}  \leq 6$, $\bar{p} \neq \bar{q}$, $\bar{p}, \bar{q} \geq 1$ be psd matrices of rank one with the form $A^{(i)} = a_ia_i^T$ and $B^{(j)} = b_jb_j^T$, and $a_1 = (a_{11}, 0)$ and $b_1 = (0,b_{12})$. Assume $\det(a_i,a_j) \neq 0, \det(b_i,b_j) \neq 0$ for all $i,j$ and $\langle a_i,b_j \rangle \neq 0$ for $i,j \geq 2$.
     Then the cone $\bar{P}_{(\mathcal{A},\mathcal{B})} = \{\underline{D} \in \R^6 \mid \bar{C}_{(\mathcal{A},\mathcal{B})} \underline{D} \geq 0 \}$ is full-dimensional.     
\end{lemma}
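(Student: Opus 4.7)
The plan is to apply Farkas' lemma: the cone $\bar{P}_{(\mathcal{A},\mathcal{B})}$ is full-dimensional if and only if the left kernel of $\bar{C}_{(\mathcal{A},\mathcal{B})}$ contains no nonzero vector with all nonnegative entries. First I reduce to the case $\bar{p}+\bar{q}=6$. Appending further rank-one factors, chosen generically so that all nonvanishing hypotheses persist, only adjoins rows to $\bar{C}_{(\mathcal{A},\mathcal{B})}$ and therefore shrinks the cone; if the extended cone is full-dimensional, so is the original. Since $\bar{p}\neq\bar{q}$ and $\bar{p},\bar{q}\geq 1$, every admissible starting configuration can be extended to one of $(\bar{p}',\bar{q}')\in\{(1,5),(5,1),(2,4),(4,2)\}$, avoiding the excluded diagonal case.

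For the pure cases $(1,5)$ and $(5,1)$ the argument is direct. When $\bar{p}=1$ every row of $\bar{C}_{(\mathcal{A},\mathcal{B})}$ is an $\alpha_j^B$ whose second entry is $-b_{j1}^4$; the hypotheses $\det(b_1,b_j)\neq 0$ and $b_1=(0,b_{12})$ force $b_{j1}\neq 0$, so column two is strictly negative, and $y^T\bar{C}_{(\mathcal{A},\mathcal{B})}=0$ with $y\geq 0$ forces $y=0$. The case $\bar{q}=1$ is symmetric using the positive column-two entries $a_{i2}^4$. For the mixed cases $(2,4)$ and $(4,2)$, \Cref{lem:matrix-C2}(4) gives that $\bar{C}_{(\mathcal{A},\mathcal{B})}$ has rank three, so its left kernel is one-dimensional and spanned by the vector $v$ of \eqref{eqn:left-kernel-C2}. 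It suffices to show the four entries of $v$ do not all share a sign, which I prove by computing the sign of their product. A multiplicity count in \eqref{eqn:left-kernel-C2} (for $(2,4)$) shows that each $\det(b_i,b_j)$ and $\langle a_2,b_j\rangle$ with $i,j\geq 2$ appears in precisely two of the four entries and thus contributes a square, while $\det(a_1,a_2)$, $\langle a_2,b_1\rangle$, and each $\det(b_1,b_j)$ and $\langle a_1,b_j\rangle$ with $j\geq 2$ appears in three. Substituting $\det(b_1,b_j)=-b_{12}b_{j1}$ and $\langle a_1,b_j\rangle=a_{11}b_{j1}$, the odd-powered contributions collapse to an overall sign $-1$ independent of the signs of $a_{11}$ and $b_{12}$; combined with the explicit prefactor $\prod_k(-1)^{k+\mathbbm{1}_{k\geq\bar{p}+1}}=1$ evaluated over $k\in\{2,4,5,6\}$, the product of the four entries of $v$ is strictly negative, so their signs cannot all agree. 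The $(4,2)$ case follows by the analogous count or by the $\mathcal{A}\leftrightarrow\mathcal{B}$ symmetry (swapping the coordinates that interchange $a_1$ and $b_1$).

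The main obstacle is the sign bookkeeping in the mixed cases. The analysis is mechanical but requires care in tracking multiplicities; its cleanness stems from the fact that every factor of odd multiplicity involves either the orthogonal pair $(a_1,b_1)$ or the isolated minority-side index, so its sign is determined explicitly by the relations $\det(b_1,b_j)=-b_{12}b_{j1}$ and $\langle a_1,b_j\rangle=a_{11}b_{j1}$.
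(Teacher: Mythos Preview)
Your proof is correct and follows the same overall strategy as the paper: Farkas' lemma, reduction to $\bar{p}+\bar{q}=6$ by appending generic factors, a direct column-sign argument for the extreme cases $(\bar{p},\bar{q})\in\{(1,5),(5,1)\}$, and analysis of the one-dimensional left kernel of $\bar{C}_{(\mathcal{A},\mathcal{B})}$ for the mixed cases $(2,4)$ and $(4,2)$. The one substantive difference is that the paper handles the mixed cases by a computational check that the left-kernel generator $v$ from \eqref{eqn:left-kernel-C2} has entries of mixed sign, whereas you supply an explicit multiplicity-and-sign count showing that the product of the four entries of $v$ is strictly negative (hence not all of one sign); this makes the argument self-contained and is a mild improvement over the paper's computer-assisted step.
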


\begin{proof}
By Farkas' lemma there exists a $y \geq 0, y \neq 0$ such that $y^T \bar{C}_{(\mathcal{A},\mathcal{B})} = 0$ if and only if there does not exist an element $\underline{D} \in \R^6$ such that $\bar{C}_{(\mathcal{A},\mathcal{B})}x >0$. As $\det(a_i,a_j) \neq 0, \det(b_i,b_j) \neq 0$ for all $i,j$ and $\langle a_i,b_j \rangle \neq 0$ for $i,j \geq 2$, it follows from~\Cref{lem:matrix-C2} that $\rank(\bar{C}_{(\mathcal{A},\mathcal{B})}) = 3$ and the left kernel of $\bar{C}_{(\mathcal{A},\mathcal{B})}$ is spanned by the vector $v$ with coordinates given by~\eqref{eqn:left-kernel-C2}. For $\bar{p}+\bar{q}=6$ and $\bar{p} \in \{2,4\}$, we have computationally checked that there are no non-zero nonnegative left kernel vectors. Hence the cone $\bar{P}_{(\mathcal{A},\mathcal{B})}$ is full-dimensional.

Assume, then that $\bar p = 5$ and $\bar q = 1$. 
Suppose there exists some $y \geq 0, y \neq 0$ such that $y^T \bar{C}_{(\mathcal{A},\mathcal{B})} = 0$.
Then we have
\begin{equation*}
    y_1 a_{22}^2 + y_2 a_{32}^2 + y_3 a_{42}^2 + y_4 a_{52}^2 = 0.
\end{equation*}
This implies that at least one of the elements $a_{22}, a_{32}, a_{42}, a_{52}$ is zero, which is a contradiction with the assumptions. 
Thus there exists no such $y$ and the cone $\bar{C}_{(\mathcal{A},\mathcal{B})}$ is full-dimensional. A similar argument works for $\bar p = 1$ and $\bar q = 5$.

Finally, if $\bar p + \bar q < 6$, then it follows from the previous cases that the cone $\bar{P}_{(\mathcal{A},\mathcal{B})}$ is full-dimensional, because even after adding rank-one factors until there are in total six rank-one factors, the resulting cone is full-dimensional. 
\end{proof}

\begin{theorem} \label{thm:one-orthogonal-pair}
Let $M \in \M_{3,2}^{p \times q}$ and consider a size-$2$ psd factorization of $M$ given by the factors $A^{(1)}$, $\dots$, $A^{(p)}$, $B^{(1)}$, $\dots$, $B^{(q)} \in \mathcal{S}_+^2$. Let $A^{(1)}$, $\dots$, $A^{(\bar{p})}$, $B^{(1)}$, $\dots$, $B^{(\bar{q})}$, $\bar p, \bar q \geq 1$  be the collection of the factors that have rank one  with the form $A^{(i)} = a_ia_i^T$ and $B^{(j)} = b_jb_j^T$, and $\langle a_1,b_1 \rangle =0$.  Assume $\det(a_i,a_j) \neq 0, \det(b_i,b_j) \neq 0$ and $\langle a_i,b_j \rangle \neq 0$ for $i,j \geq 2$.
The factorization is $2$-infinitesimally rigid if and only if there exist  factors of rank one $A^{(i_1)},A^{(i_2)},A^{(i_3)},B^{(j_1)},B^{(j_2)},B^{(j_3)}$ such that 
\begin{equation}\label{eqn:6 factors}
\begin{split}
-\frac{\det(b_{j_1},b_{j_3}) \det(b_{j_2},b_{j_3}) \langle a_{i_1},b_{j_3} \rangle \langle a_{i_3},b_{j_3} \rangle}{\det(a_{i_1},a_{i_2}) \det(a_{i_2},a_{i_3}) \langle a_{i_2},b_{j_1} \rangle \langle a_{i_2},b_{j_2} \rangle} >0, \\
\frac{\det(b_{j_1},b_{j_3}) \det(b_{j_2},b_{j_3}) \langle a_{i_1},b_{j_3} \rangle \langle a_{i_2},b_{j_3} \rangle}{\det(a_{i_1},a_{i_3}) \det(a_{i_2},a_{i_3}) \langle a_{i_3},b_{j_1} \rangle \langle a_{i_3},b_{j_2} \rangle} >0,\\
-\frac{\det(b_{j_1},b_{j_3}) \langle a_{i_1},b_{j_3} \rangle\langle a_{i_2},b_{j_3} \rangle \langle a_{i_3},b_{j_3} \rangle}{\det(b_{j_1},b_{j_2}) \langle a_{i_1},b_{j_2} \rangle \langle a_{i_2},b_{j_2} \rangle \langle a_{i_3},b_{j_2} \rangle} >0.
\end{split}
\end{equation}
\end{theorem}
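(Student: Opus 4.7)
The plan is to follow the two-step strategy used for \Cref{thm:no-orthogonal-pairs}, with \Cref{prop:one-orth} and \Cref{lem:one-orth-pair-six-factors} playing the roles of \Cref{thm:  3+3 no orthogonal pairs case} and \Cref{lem:4A_and_2B}, respectively. The orthogonal pair $(A^{(1)}, B^{(1)})$ forces $D_{12} = D_{13} = D_{32} = 0$ in any $2$-infinitesimal motion, as shown at the beginning of the proof of \Cref{prop:one-orth}, so the relevant object is the cone $\bar P_{(\mathcal{A}, \mathcal{B})} = \{\underline D \in \R^6 : \bar C_{(\mathcal{A}, \mathcal{B})} \underline D \geq 0\}$ assembled from all rank-one factors via the matrix $\bar C_{(\mathcal{A}, \mathcal{B})}$ of \Cref{lem:matrix-C2}.

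For the ``if'' direction, assume \eqref{eqn:6 factors} holds for some six rank-one factors. Since $\langle a_{i_1}, b_{j_1} \rangle$ does not appear anywhere in \eqref{eqn:6 factors}, we may take $A^{(i_1)} = A^{(1)}$ and $B^{(j_1)} = B^{(1)}$, identifying the orthogonal pair with the distinguished indices. By \Cref{prop:one-orth}, the $3 \times 3$ subfactorization given by these six factors is $2$-infinitesimally rigid. \Cref{lem:sub-facorization-rigid} lifts the rigidity to the full rank-one subfactorization, and \Cref{lem:rank-2-factors-do-not-matter} guarantees the remaining rank-two factors impose no additional constraints.

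For the ``only if'' direction, we argue contrapositively: if no six rank-one factors satisfy \eqref{eqn:6 factors}, we show $\bar P_{(\mathcal{A}, \mathcal{B})}$ is full-dimensional, from which $2$-infinitesimal flexibility follows by the argument of \Cref{lem: full-dimensional cone implies inf.rig} adapted to $\bar C_{(\mathcal{A}, \mathcal{B})}$ (any interior point of $\bar P_{(\mathcal{A}, \mathcal{B})}$ yields a $2$-infinitesimal motion that is non-trivial because $2$-trivial motions $D = dI$ lie in the right kernel of $\bar C_{(\mathcal{A}, \mathcal{B})}$ by \Cref{lem:matrix-C2}). Suppose for contradiction that $\bar P_{(\mathcal{A}, \mathcal{B})}$ is not full-dimensional. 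Passing to a rank-$3$ column submatrix $\bar C'$ of $\bar C_{(\mathcal{A}, \mathcal{B})}$, Farkas' lemma produces a nonzero $y \geq 0$ with $y^T \bar C' = 0$; Caratheodory's theorem allows us to take $y$ with support of size at most four. We then case-split on the number of $A$-rows versus $B$-rows in the support. If the split is $(2,2)$, the four corresponding rank-one factors together with $A^{(1)}, B^{(1)}$ form a $3 \times 3$ subfactorization with one orthogonal pair; by \Cref{lem:matrix-C2}(3) the left kernel of the resulting $4 \times 6$ submatrix of $\bar C_{(\mathcal{A}, \mathcal{B})}$ is spanned up to sign by the generator with entries \eqref{eqn:left-kernel-C2}, and the positivity of $y$ translates exactly to \eqref{eqn:6 factors} holding for these six factors, contradicting our assumption. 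If the split is $(4,0), (3,1), (1,3)$, or $(0,4)$, then \Cref{lem:one-orth-pair-six-factors} applied with the matching $(\bar p, \bar q) \in \{(5,1),(4,2),(2,4),(1,5)\}$ asserts that the cone associated to those factors (plus the orthogonal pair $A^{(1)}, B^{(1)}$) is full-dimensional, again contradicting the existence of such a $y$.

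The main obstacle will be the Caratheodory case analysis, especially verifying in the $(2,2)$ case that the nonnegative left kernel vector of the $4 \times 6$ submatrix of $\bar C_{(\mathcal{A}, \mathcal{B})}$ indexed by the support is indeed a positive multiple of the generator in \Cref{lem:matrix-C2}(3), so that its positivity is equivalent to the inequalities \eqref{eqn:6 factors} holding for the six selected factors; this requires checking the non-degeneracy hypotheses (pairwise linear independence of the $a_i$ and of the $b_j$, and non-orthogonality of all relevant pairs other than $(a_1, b_1)$) hold for the factors in the support, which is ensured by the standing assumptions of the theorem.
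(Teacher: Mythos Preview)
Your proposal is correct and mirrors the paper's proof: the ``if'' direction applies \Cref{prop:one-orth} to the six chosen factors and lifts via \Cref{lem:sub-facorization-rigid}, while the ``only if'' direction (after the $O(2)$ normalization that you leave implicit) shows $\bar P_{(\mathcal{A},\mathcal{B})}$ is full-dimensional via Farkas and a Carath\'eodory case split on the $(2,2)$ versus unbalanced supports, invoking \Cref{prop:one-orth} and \Cref{lem:one-orth-pair-six-factors} exactly as the paper does. One caveat: your justification ``since $\langle a_{i_1},b_{j_1}\rangle$ does not appear, we may take $(i_1,j_1)=(1,1)$'' is not a valid substitution argument in general (the other occurrences of $a_{i_1},b_{j_1}$ would change), but the theorem is meant to be read---and the paper's own proof only works---with the orthogonal pair placed at $(i_1,j_1)$, so the step is harmless here.
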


\begin{proof}
If the inequalities \eqref{eqn:6 factors} are satisfied, then by~\Cref{lem:sub-facorization-rigid} and~\Cref{prop:one-orth} the psd factorization is 2-infinitesimally rigid.

Assume that there do not exist factors of rank one $A^{(i_1)}, A^{(i_2)}, A^{(i_3)}, B^{(j_1)}, A^{(j_2)}, A^{(j_3)}$ such that the inequalities \eqref{eqn:6 factors} are satisfied. Let $S \in O(2)$ be such that 
        \begin{align*}
            S^Ta_1 = \begin{pmatrix}
                1 \\ 0
            \end{pmatrix} \mbox{ and }
            S^{-1} b_1 = \begin{pmatrix}
                0 \\ 1
            \end{pmatrix}.
        \end{align*}
Then $\tilde A^{(i)} = S^T A^{(i)} S, \ \tilde B^{(j)} = S^{-1} B^{(j)} S^{-T}$ is a psd factorization of $M$. Consider the $(\bar{p} + \bar{q}) \times 6$ matrix $\bar{C}_{(\mathcal{A},\mathcal{B})}$ as in~\Cref{lem:matrix-C2}. We will show that the cone $\bar{P}_{(\mathcal{A},\mathcal{B})}$ is full-dimensional.
Let $\bar{C}_{(\mathcal{A},\mathcal{B})}'$ be the $(\bar{p} + \bar{q}) \times 3$ submatrix of $\bar{C}_{(\mathcal{A},\mathcal{B})}$ where only the linearly independent columns $1,2,4$ of $\bar{C}_{(\mathcal{A},\mathcal{B})}$ are kept. The cone $\bar{P}_{(\mathcal{A},\mathcal{B})}$ is full-dimensional if and only if the cone $\bar{P}_{(\mathcal{A},\mathcal{B})}' = \{\underline{D} \in \R^3 \mid \bar{C}_{(\mathcal{A},\mathcal{B})}' \underline{D} \geq 0 \}$ is full-dimensional. 
 
We will show that the cone $\bar{P}_{(\mathcal{A},\mathcal{B})}'$ is full-dimensional. By contradiction, assume that the cone $\bar{P}_{(\mathcal{A},\mathcal{B})}'$ is not full-dimensional. This means that $(0,0,0)$ can be written as a non-trivial conic combination of the rows of the matrix $\bar{C}_{(\mathcal{A},\mathcal{B})}'$. By Caratheodory’s theorem, the zero vector can be written as a non-trivial conic combination of at most four rows of the matrix $\bar{C}_{(\mathcal{A},\mathcal{B})}'$.

We will show that no matter how many of the at most four rows correspond to $A$ and $B$ factors, it is not possible to write the zero vector as a non-trivial conic combination of these rows. If two rows correspond to $A$ factors and two rows correspond to $B$ factors, then their non-trivial conic combination cannot give the zero vector because the cone corresponding to these six factors is full-dimensional by~\cref{prop:one-orth} because the inequalities \eqref{eqn:6 factors} are not satisfied. In all other cases, the cone $\bar{P}_{(\mathcal{A},\mathcal{B})}$ is full-dimensional by~\Cref{lem:one-orth-pair-six-factors}. The factorization is not 2-infinitesimally rigid by an argument similar to the one in~\cref{prop:one-orth} and~\Cref{lem:rank-2-factors-do-not-matter}.

Finally, since there is a bijection between the $2$-infinitesimal motions of $(A^{(1)}$, $\dots$, $A^{(p)}$, $B^{(1)}$, $\dots$, $B^{(q)})$ and $(\tilde A^{(1)}$, $\dots$, $\tilde A^{(p)}$, $\tilde B^{(1)}$, $\dots$, $\tilde B^{(q)})$, the factorization $(A^{(1)}$, $\dots$, $A^{(p)}$, $B^{(1)}$, $\dots$, $B^{(q)})$ is $2$-infinitesimally flexible.
\end{proof}

\subsection{Two orthogonal pairs} \label{section:two_orthogonal_pairs}

\begin{theorem}\label{thm:two-orth-inf-rigid}
    Let $M\in \M_{3,2}^{p \times q}$ and consider a size-$2$ psd factorization of $M$ given by the factors $A^{(1)}$, $\dots$, $A^{(p)}$, $B^{(1)}$, $\dots$, $B^{(q)} \in \mathcal{S}^2_+$.  Let $A^{(1)}$, $\dots$, $A^{(\bar{p})}$, $B^{(1)}$, $\dots$, $B^{(\bar{q})}$  be the collection of the factors that have rank one  with the form $A^{(i)} = a_ia_i^T$ and $B^{(j)} = b_jb_j^T$. Assume $\bar p, \bar q \geq 2$ and  $\langle a_{1}, b_{1} \rangle = \langle a_{2}, b_{2} \rangle = 0$
    and $\det(a_i,a_j) \neq 0, \det(b_i,b_j) \neq 0$ for all $i,j$. 
    Then the psd factorization is $2$-infinitesimally rigid if and only if $\bar p,\bar q \geq 3$.
\end{theorem}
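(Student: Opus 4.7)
The plan is to exploit the $\operatorname{GL}(2)$-action to simultaneously normalize both orthogonal pairs to an ``opposite-axes'' form, reduce the matrix $D$ representing a $2$-infinitesimal motion to a diagonal matrix, and then invoke \cref{lemma:diag-eq} in tandem with the additional rank-one factors.

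First, using \cref{lemma:rotation-gives-psd-factorization} and \cref{cor:bijection_between_inf_motions}, I will act by $\operatorname{GL}(2)$ without changing $2$-infinitesimal rigidity. The orthogonality $\langle a_1,b_1\rangle=0$ permits the normalization $a_1=(1,0)$ and $b_1=(0,1)$, leaving a one-parameter stabilizer of lower-triangular matrices $\bigl(\begin{smallmatrix}1 & 0\\ s & 1\end{smallmatrix}\bigr)$; a suitable choice of $s$ brings $a_2$ to $(0,\beta)$, and the orthogonality $\langle a_2,b_2\rangle=0$ then forces $b_2$ to the form $(-c\beta,0)$. The nonvanishing determinant hypotheses force $\beta,c$ and all coordinates of $a_i,b_j$ for $i,j\geq 3$ to be nonzero. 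In these coordinates $A^{(1)},A^{(2)},B^{(1)},B^{(2)}$ are rank-one diagonal matrices with a single nonzero diagonal entry.

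Second, I will apply the argument from \cref{prop:one-orth} to each orthogonal pair: for $(A^{(1)},B^{(1)})$, nonnegativity of the $1$- and $2$-minors of $A^{(1)}+t\dot A^{(1)}$ and $B^{(1)}+t\dot B^{(1)}$ yields $D_{12}=D_{13}=D_{32}=0$; by the symmetry of the normalization, the analogous argument for $(A^{(2)},B^{(2)})$ yields $D_{21}=D_{23}=D_{31}=0$. Hence $D$ is forced to be diagonal. A routine computation then shows that for $i,j\geq 3$ one has
\begin{equation*}
\det(A^{(i)}+t\dot A^{(i)}) = a_{i1}^2 a_{i2}^2\bigl[(D_{11}+D_{22}-2D_{33})t + (D_{11}D_{22}-D_{33}^2)t^2\bigr],
\end{equation*}
with the sign of the linear coefficient reversed for $\det(B^{(j)}+t\dot B^{(j)})$. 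When $\bar p,\bar q\geq 3$, both types of inequalities appear, and \cref{lemma:diag-eq} applied with indices $(1,2,3)$ forces $D_{11}=D_{22}=D_{33}$, so $D=dI$ is $2$-trivial by \cref{thm:trivial_general_case}; this proves the ``if'' direction.

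For the converse, I argue by contrapositive. If $\bar p=2$, the matrix $D=\operatorname{diag}(0,0,1)$ satisfies all the equality constraints derived above and gives $D_{11}+D_{22}-2D_{33}=-2<0$, so the $B^{(j)}$-determinants are nonnegative for small $t>0$, while no $A^{(i)}$ with $i\geq 3$ exists to impose the reversed inequality; this produces a non-trivial $2$-infinitesimal motion. The case $\bar q=2$ is handled symmetrically by $D=\operatorname{diag}(1,1,0)$. The main obstacle I anticipate is carrying out the normalization in Step~1 correctly, since the entire argument hinges on being able to bring both orthogonal pairs simultaneously into the opposite-axes form; this works because the two orthogonality conditions together with the nonvanishing determinant hypotheses use up exactly the three degrees of freedom of $\operatorname{GL}(2)$.
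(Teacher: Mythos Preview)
Your argument is correct and genuinely different from the paper's. The paper normalizes only the first orthogonal pair (using an orthogonal matrix $S\in O(2)$), then works with general $a_2,b_2$ subject to $\langle a_2,b_2\rangle=0$; this forces it through a longer computation: first $\alpha_2^A$ and $\alpha_2^B$ turn out to be negative scalar multiples of each other so both must vanish, then the resulting $5$-dimensional kernel is parametrized, the quadratic forms $\beta_2^A,\beta_2^B$ are analyzed (invoking \cref{prop:zero-alpha-forces-beta}), and only after intersecting their kernels and examining $\alpha_3^A,\alpha_3^B,\beta_3^A,\beta_3^B$ does one arrive at the trivial motion.

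Your route is cleaner: by spending the full $\operatorname{GL}(2)$ (not just $O(2)$) to bring both orthogonal pairs to axis-aligned form, you can apply the $D_{12}=D_{13}=D_{32}=0$ argument from \cref{prop:one-orth} and its mirror $D_{21}=D_{23}=D_{31}=0$ symmetrically, forcing $D$ diagonal in one stroke. The remaining determinant computations then match the hypotheses of \cref{lemma:diag-eq} exactly, and the converse is handled by explicit diagonal $D$'s. What the paper's approach buys is that it stays within the $O(2)$-normalization used in \cref{prop:one-orth}, so it is a direct continuation of that analysis; what your approach buys is the elimination of all the quadratic-form computations.

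Two minor points: your closing remark that the normalization ``uses up exactly the three degrees of freedom of $\operatorname{GL}(2)$'' miscounts (it is four, and you do use all four---three to pin $a_1,b_1$ to the axes and one more to kill the first coordinate of $a_2$); and in the converse direction you should note (as the paper does implicitly via \cref{lem:rank-2-factors-do-not-matter}) that any rank-two factors impose no additional constraints on the diagonal $D$'s you exhibit.
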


\begin{proof}
First we prove the claim in the case when $a_1 = (a_{11}, 0)$ and $b_1 = (0,b_{12})$.
Consider a psd factorization of a submatrix $M'$ of $M$ given by the factors $A^{(1)}$, $\dots$, $A^{(\bar{p})}$, $B^{(1)}$, $\dots$, $B^{(\bar{q})}$ of rank one, and let a  2-infinitesimal motion of this factorization be given by the matrix $D$. From the diagonals and determinants of the matrices $A^{(1)} + t \dot A^{(1)}$ and $B^{(1)} + t \dot B^{(1)}$ we conclude that $D_{12} = D_{13} = D_{32} = 0$ as in the proof of \cref{prop:one-orth}.

From here, we consider the factors under a $2$-infinitesimal motion. 
A $2$-infinitesimal motion of the factorization needs to satisfy
\begin{align*}
    T^2([A^{(i)}+t \dot A^{(i)}]_{\{1,2\}}) &=\det(A^{(i)}+ t\dot A^{(i)}) = \alpha^A_i(D)t + \beta^A_i(D)t^2 \geq 0,\\
    T^2([B^{(j)}+t \dot B^{(j)}]_{\{1,2\}}) &= \det(B^{(j)}+ t\dot B^{(j)}) = \alpha^B_j(D)t + \beta^B_j(D)t^2 \geq 0,
\end{align*}
for all $i \in [p]$ and $j \in [q]$ and for $t \in [0,\varepsilon)$ and $\varepsilon > 0$ small enough. 
Using the orthogonality of $a_2$ and $b_2$,
we make the substitution $b_{22}=-\frac{b_{21}a_{21}}{a_{22}}$. 
Then we have  \begin{align*}
    \alpha^A_2  &= \left(
\begin{array}{ccccccccc}
 a_{21}^2 a_{22}^2 & a_{21}^4 & -\sqrt{2} a_{21}^3 a_{22} & a_{22}^4 & a_{21}^2 a_{22}^2 & -\sqrt{2} a_{21} a_{22}^3 & \sqrt{2} a_{21} a_{22}^3 & \sqrt{2} a_{21}^3 a_{22} & -2 a_{21}^2 a_{22}^2 
\end{array}
\right),\\
    \alpha^B_2 &= \left(
\begin{array}{ccccccccc}
 -b_{21}^2 b_{22}^2 & -b_{22}^4 & -\sqrt{2} b_{21} b_{22}^3 & -b_{21}^4 & -b_{21}^2 b_{22}^2 & -\sqrt{2} b_{21}^3 b_{22} & \sqrt{2} b_{21}^3 b_{22} & \sqrt{2} b_{21} b_{22}^3 & 2 b_{21}^2 b_{22}^2 
\end{array}
\right)
    \\ &= \left(
\begin{array}{ccccccccc}
 -\frac{a_{21}^2 b_{21}^4}{a_{22}^2} & -\frac{a_{21}^4 b_{21}^4}{a_{22}^4} & \frac{\sqrt{2} a_{21}^3 b_{21}^4}{a_{22}^3} & -b_{21}^4 & -\frac{a_{21}^2 b_{21}^4}{a_{22}^2} & \frac{\sqrt{2} a_{21} b_{21}^4}{a_{22}} & -\frac{\sqrt{2} a_{21} b_{21}^4}{a_{22}} & -\frac{\sqrt{2} a_{21}^3 b_{21}^4}{a_{22}^3} & \frac{2 a_{21}^2 b_{21}^4}{a_{22}^2}
\end{array}
\right).\end{align*} 
Now observe $\alpha^A_2=-\frac{a_{22}^4}{b_{21}^4}\cdot \alpha^B_2$. Then in order for both $\alpha^A_2(D) \geq 0$ and $\alpha^B_2(D) \geq 0$,  $\alpha^A_2(D) = \alpha^B_2(D) = 0$. We add this to our constraints, along with $D_{12} = D_{13}=D_{32}=0$. 
The system of equations corresponding to these constraints is represented by the matrix 
\begin{equation}\label{eqn: one-orth-constraint-matrix}
    \left(
\begin{array}{ccccccccc}
 0 & 1 & 0 & 0 & 0 & 0 & 0 & 0 & 0 \\
 0 & 0 & 1 & 0 & 0 & 0 & 0 & 0 & 0 \\
 0 & 0 & 0 & 0 & 0 & 0 & 0 & 1 & 0 \\
 a_{21}^2 a_{22}^2 & a_{21}^4 & -\sqrt{2} a_{21}^3 a_{22} & a_{22}^4 & a_{21}^2 a_{22}^2 & -\sqrt{2} a_{21} a_{22}^3 & \sqrt{2} a_{21} a_{22}^3 & \sqrt{2} a_{21}^3 a_{22} & -2 a_{21}^2 a_{22}^2 \\
\end{array}
\right).
\end{equation}
In particular, the $2$-infinitesimal motions are in the right kernel of the matrix in \eqref{eqn: one-orth-constraint-matrix}.
The kernel of the matrix of constraints 
is 5-dimensional with a basis given by the rows of the matrix
\begin{equation}\label{eqn: a2-d12-d13-d32-basis}
    \left(
\begin{array}{ccccccccc}
 2 & 0 & 0 & 0 & 0 & 0 & 0 & 0 & 1 \\
 -\frac{\sqrt{2} a_{22}}{a_{21}} & 0 & 0 & 0 & 0 & 0 & 1 & 0 & 0 \\
 \frac{\sqrt{2} a_{22}}{a_{21}} & 0 & 0 & 0 & 0 & 1 & 0 & 0 & 0 \\
 -1 & 0 & 0 & 0 & 1 & 0 & 0 & 0 & 0 \\
 -\frac{a_{22}^2}{a_{21}^2} & 0 & 0 & 1 & 0 & 0 & 0 & 0 & 0 \\
\end{array}
\right).
\end{equation}

We now consider a motion $R$, formed by a linear combination of the rows of the matrix in \eqref{eqn: a2-d12-d13-d32-basis}. 
Let $r_l$ be the coefficient corresponding to row $l$. 
This gives us 
\begin{equation*}
    R = \left(-\frac{a_{22}^2 r_5}{a_{21}^2}+\frac{\sqrt{2} a_{22} \left(r_3-r_2\right)}{a_{21}}+2 r_1-r_4,0,0,r_5,r_4,r_3,r_2,0,r_1\right).
\end{equation*} 

Since $\alpha^A_2(R) = \alpha^B_2(R)=0$, we check $\beta^A_2(R)$ and $\beta^B_2(R)$ and find we have terms quadratic in each $r_l$. These two quadratics are represented by the $5\times 5$ matrices as shown below:
\begin{align*}
    \beta^A_2(R)&:\left(
\begin{array}{ccccc}
 -a_{21}^2 a_{22}^2 & 0 & -\frac{a_{21} a_{22}^3}{\sqrt{2}} & a_{21}^2 a_{22}^2 & 0 \\
 0 & 0 & 0 & 0 & 0 \\
 -\frac{a_{21} a_{22}^3}{\sqrt{2}} & 0 & -\frac{a_{22}^4}{2} & \frac{a_{21} a_{22}^3}{\sqrt{2}} & 0 \\
 a_{21}^2 a_{22}^2 & 0 & \frac{a_{21} a_{22}^3}{\sqrt{2}} & -a_{21}^2 a_{22}^2 & 0 \\
 0 & 0 & 0 & 0 & 0 \\
\end{array}
\right),\\
\beta^B_2(R)&:\left(
\begin{array}{ccccc}
 -\frac{a_{21}^2 b_{21}^4}{a_{22}^2} & \frac{a_{21} b_{21}^4}{\sqrt{2} a_{22}} & -\frac{\sqrt{2} a_{21} b_{21}^4}{a_{22}} & \frac{a_{21}^2 b_{21}^4}{a_{22}^2} & b_{21}^4 \\
 \frac{a_{21} b_{21}^4}{\sqrt{2} a_{22}} & -\frac{b_{21}^4}{2} & b_{21}^4 & -\frac{a_{21} b_{21}^4}{\sqrt{2} a_{22}} & -\frac{a_{22} b_{21}^4}{\sqrt{2} a_{21}} \\
 -\frac{\sqrt{2} a_{21} b_{21}^4}{a_{22}} & b_{21}^4 & -2 b_{21}^4 & \frac{\sqrt{2} a_{21} b_{21}^4}{a_{22}} & \frac{\sqrt{2} a_{22} b_{21}^4}{a_{21}} \\
 \frac{a_{21}^2 b_{21}^4}{a_{22}^2} & -\frac{a_{21} b_{21}^4}{\sqrt{2} a_{22}} & \frac{\sqrt{2} a_{21} b_{21}^4}{a_{22}} & -\frac{a_{21}^2 b_{21}^4}{a_{22}^2} & -b_{21}^4 \\
 b_{21}^4 & -\frac{a_{22} b_{21}^4}{\sqrt{2} a_{21}} & \frac{\sqrt{2} a_{22} b_{21}^4}{a_{21}} & -b_{21}^4 & -\frac{a_{22}^2 b_{21}^4}{a_{21}^2} \\
\end{array}
\right).
\end{align*} 

By \cref{prop:zero-alpha-forces-beta} 
these quadratic forms $\beta^A_2(R)$ and $\beta^B_2(R)$ are non-positive for any values $r_1$, $r_2$, $r_3$, $r_4$, $r_5$. 
We now check for values that will make both of these terms zero. 
Each of these matrices has a 4-dimensional kernel. The intersection of these kernels is 3-dimensional with a basis given by the rows of the following matrix: \begin{equation}\label{eqn: q-f-nullspace}
    \left(
\begin{array}{ccccc}
 0 & -\frac{\sqrt{2} a_{22}}{a_{21}} & 0 & 0 & 1 \\
 1 & 0 & 0 & 1 & 0 \\
 -\frac{a_{22}}{\sqrt{2} a_{21}} & 1 & 1 & 0 & 0 \\
\end{array}
\right).
\end{equation}
To find values of $r_1$, $r_2$, $r_3$, $r_4$, $r_5$ such that $\beta^A_2(R)=\beta^B_2(R)=0$, we form $S$ given by a linear combination of the rows of this matrix. Let $s_1,s_2,s_3$ be the coefficient corresponding to rows $1,2,3$, respectively. This gives us: \begin{equation*}
    S = \left(s_2-\frac{a_{22} s_3}{\sqrt{2} a_{21}},s_3-\frac{\sqrt{2} a_{22} s_1}{a_{21}},s_3,s_2,s_1\right).
\end{equation*} 

We find our new motion, $R'$ in terms of $S$: \begin{equation*}
    R' = \left(\frac{a_{22}^2 s_1}{a_{21}^2}-\frac{\sqrt{2} a_{22} s_3}{a_{21}}+s_2,0,0,s_1,s_2,s_3,s_3-\frac{\sqrt{2} a_{22} s_1}{a_{21}},0,s_2-\frac{a_{22} s_3}{\sqrt{2} a_{21}}\right).
\end{equation*}

If $\bar p = \bar q = 2$, then any assignment of $s_1,s_2,s_3$ gives a 2-infinitesimal motion and the psd factorization is not 2-infinitesimally rigid.

If at least one of $\bar p >2$ or $\bar q >2$, then for a 2-infinitesimal motion we need to ensure that 
$\det(A^{(i)}+ t\dot A^{(i)})\geq 0$ for $i=3,\dots, \bar p$, and that $\det(B^{(j)}+ t\dot B^{(j)})\geq 0$ for $j=3, \dots, \bar q$.
We start with considering the $\alpha$-terms: \begin{align*}
    \alpha^A_i(R') &= s_1\frac{a_{i2}^2 \left(a_{22} a_{i1}-a_{21} a_{i2}\right)^2}{a_{21}^2}\\
    \alpha^B_j(R') &= -s_1\frac{b_{j1}^2 \left(a_{21} b_{j1}+a_{22} b_{j2}\right){}^2}{a_{21}^2}.
\end{align*}

First, we consider the case if $\bar p=2$ or $\bar q=2$. Without loss of generality, we assume that $\bar p \geq 3$ and $\bar q = 2$. We note that $\alpha^A_i(R')$ for $i=3,\dots, \bar p$ contain all squared terms with the exception of $s_1$. Hence if $s_1 \geq 0$, then $\alpha^A_i(R') \geq 0$ for $i=3,\dots, \bar p$. Moreover, if $s_1 > 0$, then $\alpha^A_i(R') >0$ for $i=3,\dots, \bar p$. If $\alpha^A_i(R') >0$, then also $\det(A^{(i)}+ t\dot A^{(i)})\geq 0$ for $t>0$ small enough. Hence any assignment of $s_1>0$ and arbitrary $s_2,s_3$ gives a $2$-infinitesimal motion and hence the psd factorization is not 2-infinitesimally rigid.

If both $\bar p \geq 3$ and $\bar q \geq 3$, we look at the determinants $\det(A^{(3)}+ t\dot A^{(3)})$  and  $\det(B^{(3)}+ t\dot B^{(3)})$. Since $\alpha^A_3(R')$ and $\alpha^B_3(R')$ contain all squared terms with the exception of $s_1$ with opposite signs, we need at least one of these to be zero. 
We consider the terms of $\alpha^A_3(R')$ first: neither $a_{32}$ nor $(a_{22} a_{31}-a_{21} a_{32})$ can be zero, otherwise we have dependent factors. 
We turn our attention to $\alpha^B_3(R')$ and find a similar argument: neither $b_{31}^2$ nor $(a_{21} b_{31}+a_{22} b_{32})$ can be zero otherwise $\det(b_1, b_3) = 0$ or $\det(b_2, b_3) = 0$ because $\langle a_2,b_3 \rangle=0$ together with $\langle a_2,b_2 \rangle=0$ implies $\det(b_2, b_3) = 0$. 
We establish that $s_1=0$. 
We update our motion with this new constraint: \begin{equation*}
    R'' = \left(s_2-\frac{\sqrt{2} a_{22} s_3}{a_{21}},0,0,0,s_2,s_3,s_3,0,s_2-\frac{a_{22} s_3}{\sqrt{2} a_{21}}\right).
\end{equation*}

Now since $\alpha^A_3(R'')=\alpha^B_3(R'')=0$, we check the last terms: \begin{align*}
    \beta^A_3(R'')&= -s_3^2\frac{a_{32}^2 \left(a_{22} a_{31}-a_{21} a_{32}\right)^2}{2 a_{21}^2},\\
    \beta^B_3(R'')&= -s_3^2\frac{b_{31}^2 \left(a_{21} b_{31}+a_{22} b_{32}\right)^2}{2 a_{21}^2}.
\end{align*}

We observe both of these are negative with all squared terms, so we look to make them equal to zero. We can only do this by making $s_3=0$. 
What we have left is a $2$-trivial motion: 
\begin{equation*}
    R''' = \left(s_2,0,0,0,s_2,0,0,0,s_2\right).
\end{equation*}
Therefore, by \cref{lem:sub-facorization-rigid} the psd factorization of $M$ given by the factors $A^{(1)}$, $\dots$, $A^{(p)}$, $B^{(1)}$, $\dots$, $B^{(q)}$ is 2-infinitesimally rigid.

For the general case, we proceed as in
\cref{prop:one-orth} by using 
an orthogonal matrix $S \in O(2)$, such that 
        \begin{align*}
            S^Ta_1 = \begin{pmatrix}
                1 \\ 0
            \end{pmatrix}, \mbox{ and }
            S^{-1} b_1 = \begin{pmatrix}
                0 \\ 1
            \end{pmatrix}.
        \end{align*}
This leads to the conclusion 
that $(A^{(1)},A^{(2)},A^{(3)},B^{(1)},B^{(2)},B^{(3)})$ is $2$-infinitesimally rigid and therefore 
the entire factorization
$(A^{(1)},\ldots,A^{(p)},B^{(1)}, \ldots, B^{(q)})$ is $2$-infinitesimally rigid. 
\end{proof}

\section{Rigidity and boundaries} \label{section:rigidity_and_boundaries}
Now we turn to the relationship between unique psd factorizations and $s$-infinitesimal rigidity. The uniqueness is only up to the action of $\operatorname{GL}(k)$, and following the literature in rigidity theory we will introduce two related concepts: local and global rigidity of psd factorizations. It is easily seen that the latter implies the former, but the converse is not true in general. However, we will prove that they are 
equivalent in the case of matrices in $\M_{3,2}^{p \times q}$. Then we will show that $1$-infinitesimal rigidity implies local rigidity in the same situation. And finally, for positive matrices in $\M_{3,2}^{p \times q}$ we will argue that $2$-infinitesimal rigidity implies local rigidity. All these will culminate in \Cref{thm:equivalence-of-rigidities}. 

In the rest of the paper, we denote by $\mathcal{SF}(M)$ the space of all size-$k$ psd factorizations of 
$M \in \M_{\binom{k+1}{2}, k}^{p \times q}$. For $(A^{(1)},\ldots,B^{(q)}) \in \mathcal{SF}(M)$ and for $S \in GL(k)$, we refer to psd factorizations of the form $(S^T A^{(1)} S,\ldots,S^{-1} B^{(q)} S^{-T})$ as the orbit of $(A^{(1)},\ldots,B^{(q)})$.

\subsection{Rigidity for positive matrices} \label{section:local_and_global_rigidity}

\begin{definition}
A size-$k$ psd factorization $(A^{(1)},\ldots,A^{(p)},B^{(1)},\ldots,B^{(q)})$ of a matrix $M \in \R_+^{p \times q}$ is called {\it locally rigid} if 
there exists a neighborhood of $(A^{(1)},\ldots,A^{(p)},B^{(1)},\ldots,B^{(q)})$ in $\mathcal{SF}(M)$ such that any other psd factorization of $M$ in this neighborhood 
is in the orbit of $(A^{(1)},\ldots,A^{(p)},B^{(1)},\ldots,B^{(q)})$.
\end{definition}

\begin{definition}
A size-$k$ psd factorization $(A^{(1)}, \dots, A^{(p)}, B^{(1)}, \dots, B^{(q)})$ of a matrix $M \in \R_+^{p \times q}$ is called {\em globally rigid} if all other psd factorizations of $M$ 
are in the orbit of 
$(A^{(1)},\ldots,A^{(p)},B^{(1)},\ldots,B^{(q)})$.
\end{definition}

Global rigidity is the same as uniqueness of a psd factorization,  up to modifications induced by the action of $\operatorname{GL}(k)$. 
 From these definitions the following immediately follows. 
\begin{lemma}
If a size-$k$ psd factorization of a matrix $M \in \R_+^{p \times q}$ is globally rigid, then it is locally rigid.
\end{lemma}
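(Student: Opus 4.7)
The plan is to argue directly from the definitions, since global rigidity is a strictly stronger condition than local rigidity in the sense given. Suppose $(A^{(1)},\ldots,A^{(p)},B^{(1)},\ldots,B^{(q)})$ is a globally rigid size-$k$ psd factorization of $M \in \R_+^{p\times q}$. Then, by definition, every element of $\mathcal{SF}(M)$ lies in the orbit of $(A^{(1)},\ldots,A^{(p)},B^{(1)},\ldots,B^{(q)})$ under the action of $\operatorname{GL}(k)$.

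To verify local rigidity, I just need to exhibit a neighborhood $U$ of $(A^{(1)},\ldots,A^{(p)},B^{(1)},\ldots,B^{(q)})$ in $\mathcal{SF}(M)$ such that every psd factorization of $M$ in $U$ lies in the orbit of $(A^{(1)},\ldots,A^{(p)},B^{(1)},\ldots,B^{(q)})$. The simplest choice is to take $U = \mathcal{SF}(M)$ itself, which is (trivially) an open neighborhood of the given factorization in $\mathcal{SF}(M)$. By the assumption of global rigidity, every factorization in $U$ lies in the required orbit, and local rigidity follows immediately.

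There is no real obstacle here; the statement is essentially tautological once one unpacks the two definitions, and the proof is a single line. The content of the paper lies in the converse direction (which is not true in general but is established in \Cref{thm:equivalence-of-rigidities} under the $\M_{3,2}^{p\times q}$ hypothesis), together with the connection of these notions to $s$-infinitesimal rigidity developed in the preceding sections.
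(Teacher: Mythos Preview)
Your proposal is correct and matches the paper's own treatment: the paper states that the lemma ``immediately follows'' from the definitions and gives no separate proof. Your one-line argument, taking $U = \mathcal{SF}(M)$ as the neighborhood, is exactly the tautological observation the paper has in mind.
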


In general we do not expect the converse of the above statement to hold: as soon as a matrix $M \in \R_+^{p \times q}$ has a locally rigid size-$k$ psd factorization together with another size-$k$ psd factorization not in the same $\operatorname{GL}(k)$-orbit as the first one, then $M$ cannot have a unique psd factorization even up to the action of $\operatorname{GL}(k)$. However, local and global rigidity happen to be the same for matrices 
in $\M_{3,2}^{p \times q}$.

\begin{lemma} \label{lem:local-rigidity-implies-global-rigidity}
Let $M \in \M_{3,2}^{p \times q}$, and let $A^{(i)} \in \mathcal{S}_+^k, \, i\in [p]$ and $B^{(j)} \in \mathcal{S}_+^k, \, j \in [q]$ give a size-$2$ psd factorization of $M$. Then this psd factorization is locally rigid if and only if it is globally rigid.
\end{lemma}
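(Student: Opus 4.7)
The forward direction (global implies local) is immediate from the definitions, so I focus on the converse. Suppose the factorization $(A^{(1)}, \ldots, B^{(q)})$ is locally rigid, and denote its $\operatorname{GL}(2)$-orbit by $\mathcal{O} \subseteq \mathcal{SF}(M)$. By hypothesis $\mathcal{O}$ is open in $\mathcal{SF}(M)$. My plan is to show that $\mathcal{O}$ is additionally closed in $\mathcal{SF}(M)$ and that $\mathcal{SF}(M)$ is connected; the standard open/closed topological argument then yields $\mathcal{O} = \mathcal{SF}(M)$, which is global rigidity.

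For closedness I first observe that the $\operatorname{GL}(2)$-action on $\mathcal{SF}(M)$ has stabilizer $\{\pm I\}$ at every point: if $S \in \operatorname{GL}(2)$ satisfies $S^T A^{(i)} S = A^{(i)}$ for all $i$, then since the $A^{(i)}$ span $\mathcal{S}^2$ (because $\rank(\A)=3$), $S^T X S = X$ holds for every $X \in \mathcal{S}^2$, which forces $S = \pm I$. Every orbit is therefore a smooth $4$-dimensional manifold. Now suppose $(S_n^T A^{(i)} S_n, S_n^{-1} B^{(j)} S_n^{-T}) \in \mathcal{O}$ converges to some element of $\mathcal{SF}(M)$. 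Set $H := \sum_i A^{(i)}$; by the rank-$3$ hypothesis $H$ is positive definite, and the sums $S_n^T H S_n$ converge to another positive-definite matrix. Taking determinants gives $(\det S_n)^2 \det(H) \to c > 0$, so $|\det S_n|$ is bounded away from $0$; taking traces together with $\tr(S_n^T H S_n) \geq \lambda_{\min}(H)\,\|S_n\|_F^{2}$ shows that $\|S_n\|$ stays bounded. A convergent subsequence $S_{n_k} \to S \in \operatorname{GL}(2)$ then exhibits the limit factorization as an element of $\mathcal{O}$.

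The main obstacle is the connectedness of $\mathcal{SF}(M)$. Parametrizing $\mathcal{SF}(M)$ by the semi-algebraic set
\[
\mathcal{C} = \{C \in \operatorname{GL}(3) : \A C \text{ has psd rows and } C^{-1} \B \text{ has psd columns}\},
\]
one is reduced to building, for any $C \in \mathcal{C}$, a continuous path in $\mathcal{C}$ from $I$ to $C$. The naive straight line $C_t = (1-t)I + tC$ keeps the rows of $\A C_t$ psd by convexity of $\mathcal{S}_+^2$, but the columns of $C_t^{-1}\B$ need not remain psd along the path. A refined construction must exploit the specific geometry of matrices in $\M_{3,2}^{p\times q}$---for instance, the fact that rank-$1$ factors lie on the one-dimensional boundary of the $2$-dimensional psd cone, together with the rank-$3$ constraint on $\A$ and $\B$, which rigidly couples the two sides of the factorization. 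Once connectedness is established, combining it with openness of $\mathcal{O}$ (from local rigidity) and closedness of $\mathcal{O}$ (from the previous paragraph) immediately gives $\mathcal{O} = \mathcal{SF}(M)$, i.e., global rigidity. The connectedness step is where I expect the real work to lie.
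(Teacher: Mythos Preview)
Your strategy---orbit open, orbit closed, ambient space connected---is exactly the paper's. The paper phrases it in the quotient $\mathcal{SF}(M)/\operatorname{GL}(2)$: local rigidity makes the orbit an open point there, and connectedness then forces the quotient to be a single point. Your closedness argument via boundedness of $\|S_n\|$ and $|\det S_n|^{-1}$ is correct and in fact more explicit than the paper, which tacitly uses that points in the quotient are closed.

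The part you flag as ``the real work,'' connectedness, is not proved in the paper either: it is imported as a black box from \cite[Proposition~9]{Fawzi:2015}, which asserts precisely that $\mathcal{SF}(M)/\operatorname{GL}(2)$ is connected when $M$ has rank~$3$ and psd rank~$2$. So the step you were bracing yourself for is already in the literature, and the lemma follows immediately once you cite it. One small caveat: the cited result is for the quotient $\mathcal{SF}(M)/\operatorname{GL}(2)$, not for $\mathcal{SF}(M)$ itself, so it is cleanest to run your open/closed argument in the quotient (your closedness of $\mathcal{O}$ in $\mathcal{SF}(M)$ descends to closedness of the corresponding point in the quotient, since the quotient map for a group action is open). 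Your attempt to build a path inside $\mathcal{C}$ directly is unnecessary.
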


\begin{proof}
As we saw above, global rigidity always implies local rigidity. For the converse implication, assume that $(A^{(1)},\ldots,A^{(p)},B^{(1)},\ldots,B^{(q)})$ is locally rigid. By the definition of local rigidity, there exists a neighborhood of $(A^{(1)},\ldots,A^{(p)},B^{(1)},\ldots,B^{(q)})$ where $\mathcal{SF}(M)/GL(k)$ consists of a single point. By~\cite[Proposition 9]{Fawzi:2015}, the space $\mathcal{SF}(M)/GL(k)$ is connected  in the case when $M \in \M_{3,2}^{p \times q}$. Hence $\mathcal{SF}(M)/GL(k)$ must be equal to $(A^{(1)},\ldots,A^{(p)},B^{(1)},\ldots,B^{(q)})$ and this psd factorization is globally rigid.
\end{proof}

The goal in the rest of the subsection is to prove the following result for positive matrices.

\begin{theorem} \label{thm:equivalence-of-rigidities}
Let $M \in \mathcal{M}_{3,2}^{p \times q}$ 
be a positive matrix.
Suppose $A^{(i)} \in \mathcal{S}_+^2, \, i\in [p]$ and $B^{(j)} \in \mathcal{S}_+^2, \, j \in [q]$ give a size-$2$ psd factorization of $M$. Then the following are equivalent:
\begin{itemize}
    \item[a)] $(A^{(1)},\ldots,A^{(p)},B^{(1)},\ldots,B^{(q)})$ is $1$-infinitesimally rigid.
    \item[b)] $(A^{(1)},\ldots,A^{(p)},B^{(1)},\ldots,B^{(q)})$ is $2$-infinitesimally rigid. 
    \item[c)] $(A^{(1)},\ldots,A^{(p)},B^{(1)},\ldots,B^{(q)})$ is locally rigid.
    \item[d)] $(A^{(1)},\ldots,A^{(p)},B^{(1)},\ldots,B^{(q)})$ is globally rigid.
\end{itemize}
\end{theorem}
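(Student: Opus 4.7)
The plan is to decompose the equivalence into four implications, only one of which requires substantial new work. The equivalence (a) $\Leftrightarrow$ (b) is immediate from \Cref{thm:no-orthogonal-pairs}, since the characterization of $s$-infinitesimal rigidity via \eqref{eqn:Farkas_lemma_conditions_pxq} coincides for $s = 1$ and $s = 2$. The equivalence (c) $\Leftrightarrow$ (d) is \Cref{lem:local-rigidity-implies-global-rigidity}, and (a) $\Rightarrow$ (c) follows from the forthcoming \Cref{lem:inf-rigidity-implies-local-rigidity} announced in the introduction.

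The main new content is (c) $\Rightarrow$ (a), which I would prove by contrapositive. Suppose the factorization is not $1$-infinitesimally rigid. The argument inside the proof of \Cref{thm:no-orthogonal-pairs} (via Farkas' lemma and Caratheodory) shows that the polyhedral cone $P_{(\mathcal{A},\mathcal{B})}$ of $1$-infinitesimal motions is full-dimensional in $\R^9$, whereas the subspace $L_{(I,I)}\mathcal{M}_{\operatorname{GL}(2)}$ of $1$-trivial motions has dimension $4$ by \Cref{thm:1-trivial}. I can therefore choose $\underline{D}$ in the interior of $P_{(\mathcal{A},\mathcal{B})}$ with $D \notin L_{(I,I)}\mathcal{M}_{\operatorname{GL}(2)}$. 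Define the straight-line path $S(t) = I + tD$ in $\operatorname{GL}(3)$ (invertible for $|t|$ sufficiently small) and set $\mathcal{A}(t) = \mathcal{A} S(t)$ and $\mathcal{B}(t) = S(t)^{-1}\mathcal{B}$, so that $\mathcal{A}(t)\mathcal{B}(t) = M$ identically in $t$.

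The key step is verifying that $(\mathcal{A}(t), \mathcal{B}(t))$ is a genuine psd factorization of $M$ for small $t > 0$. Rank-two factors remain positive definite by continuity. For a rank-one $A^{(i)}$, since $\mathcal{A}(t)$ is linear in $t$ we have $\det(A^{(i)}(t)) = \alpha_i^A(D)\,t + \beta_i^A(D)\,t^2$, and the strict interior condition $\alpha_i^A(D) > 0$ forces positivity of the determinant for small $t > 0$, while \Cref{lemma:1-inf-motions-enough-to-consider-determinants} gives nonnegativity of the diagonal entries. An analogous argument for rank-one $B^{(j)}$ uses $\alpha_j^B(D) > 0$; here the subtlety is that $\mathcal{B}(t) = S(t)^{-1}\mathcal{B}$ is not linear in $t$, so the geometric-series corrections to $S(t)^{-1}$ contribute to all principal minors and one must verify that the strict first-order positivity dominates these higher-order terms (particularly when $B^{(j)}$ has a zero diagonal entry). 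Once all factors are confirmed psd, $(\mathcal{A}(t), \mathcal{B}(t))$ is a path in $\mathcal{SF}(M)$ whose tangent direction at $t = 0$ is $D \notin L_{(I,I)}\mathcal{M}_{\operatorname{GL}(2)}$, so the path exits the $\operatorname{GL}(2)$-orbit of $(\mathcal{A}, \mathcal{B})$ and local rigidity fails. This last verification of psd-ness on the $B$ side is the main technical obstacle.
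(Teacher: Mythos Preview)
Your decomposition into four implications matches the paper exactly: (a)~$\Leftrightarrow$~(b) from \Cref{thm:no-orthogonal-pairs}, (c)~$\Leftrightarrow$~(d) from \Cref{lem:local-rigidity-implies-global-rigidity}, (a)~$\Rightarrow$~(c) from \Cref{lem:inf-rigidity-implies-local-rigidity}, and the substantial direction is (c)~$\Rightarrow$~(a)/(b) by contrapositive, using the full-dimensionality of $P_{(\mathcal{A},\mathcal{B})}$ established inside the proof of \Cref{thm:no-orthogonal-pairs}.

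The execution of the key implication differs. The paper (in \Cref{prop:global-rigidity-implies-infinitesimal-rigidity-for-psd-rank-2-positive-matrices}) argues abstractly: the interior of $P_{(\mathcal{A},\mathcal{B})}$ sits inside $W_{(\mathcal{A},\mathcal{B})}$, so $W_{(\mathcal{A},\mathcal{B})}$ is $9$-dimensional; hence the \emph{linear} tangent space $T_{(I,I)}F$ meets $U_{(\mathcal{A},\mathcal{B})}$ in its interior, hence so does $F$, and a dimension count ($9$ versus $4$) kills local rigidity. This sidesteps the asymmetry between the $\mathcal{A}$- and $\mathcal{B}$-sides entirely, because it works with the pair $(I+tD,\,I-tD)$, which is linear in $t$ on both sides. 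You instead build an explicit curve $(\mathcal{A}(I+tD),\,(I+tD)^{-1}\mathcal{B})$ in $\mathcal{SF}(M)$ and must verify psd-ness directly. Your flagged ``main technical obstacle'' on the $\mathcal{B}$-side is not a genuine obstruction: since $D$ lies in the interior of $P_{(\mathcal{A},\mathcal{B})}$ one has $\alpha_j^B(D)>0$ strictly, and the first-order part of every principal minor of $B^{(j)}(t)$ coincides with that of $B^{(j)}+t\dot{B}^{(j)}$; the $O(t^2)$ corrections from the geometric series for $(I+tD)^{-1}$ cannot overturn a strictly positive linear term for small $t>0$ (and the zero-diagonal case is handled exactly as in \Cref{lemma:1-inf-motions-enough-to-consider-determinants}, since $\alpha_j^B(D)>0$ forces the relevant diagonal derivative to be strictly positive as well). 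Finally, any interior point of $P_{(\mathcal{A},\mathcal{B})}$ automatically lies outside $L_{(I,I)}\mathcal{M}_{\operatorname{GL}(2)}=\ker C_{(\mathcal{A},\mathcal{B})}$, so your path genuinely leaves the orbit. In short, your argument is correct and more explicit; the paper's is more abstract but avoids having to treat the two sides separately.
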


The items a) and b) in \cref{thm:equivalence-of-rigidities} are not equivalent for matrices with zeroes as discussed at the beginning of~\Cref{sec:infinitesimal_rigidity}. We conjecture that the items b), c) and d) are equivalent for all matrices $M \in \M_{3,2}^{p\times q}$.

As needed by the context, we will consider rows $\mathcal{A}_i$ of $\mathcal{A}$ 
and columns $\mathcal{B}_j$ of $\mathcal{B}$ 
both as vectors in $\mathbb{R}^{\binom{k+1}{2}}$ and as corresponding $k\times k$ symmetric matrices using the translation introduced in~\cref{sec:infinitestimal_motions}.  We recall that a factorization $(\mathcal{A} \mathcal{C},\mathcal{C}^{-1} \mathcal{B})$ is a psd factorization  if $\mathcal{A} \mathcal{C} \in (\mathcal{S}_+^k)^p$ and $\mathcal{C}^{-1} \mathcal{B} \in (\mathcal{S}_+^k)^q$. 
We let $P_{\mathcal{A}_i} = \{ \mathcal{C} \in \mathbb{R}^{\binom{k+1}{2} \times \binom{k+1}{2}} \, : \, \mathcal{A}_i \mathcal{C} \in \mathcal{S}_+^k\}$
and 
$P_{\mathcal{B}_j} = \{ \mathcal{D} \in \mathbb{R}^{\binom{k+1}{2} \times \binom{k+1}{2}} \, : \, \mathcal{D} \mathcal{B}_j \in \mathcal{S}_+^k\}$. 
 These sets are nonempty convex cones when $(\mathcal{A}, \mathcal{B})$ is a psd factorization since they contain the identity matrix. After fixing a factorization $(\mathcal{A}, \mathcal{B})$,
 the size-$k$ psd factorizations of $M$ are identified with the set $F \cap ((P_{\mathcal{A}_1} \cap \ldots \cap P_{\mathcal{A}_p}) \times (P_{\mathcal{B}_1} \cap \ldots \cap P_{\mathcal{B}_q}))$, where $F$ was defined in~\Cref{section:1-trivial-motions}. 
 This motivates the definition of $U_{(\mathcal{A},\mathcal{B})} = (P_{\mathcal{A}_1} \cap \ldots \cap P_{\mathcal{A}_p}) \times (P_{\mathcal{B}_1} \cap \ldots \cap P_{\mathcal{B}_q})$. This is a semialgebraic set. The psd cone $\mathcal{S}_+^k$ is defined by polynomial inequalities in the entries of the $k \times k$ symmetric matrices. A matrix is on the boundary of $\mathcal{S}_+^k$ if and only if it is psd and has rank smaller than $k$. 
 Therefore $U_{(\mathcal{A}, \mathcal{B})}$ is defined by polynomial inequalities induced by polynomial inequalities defining $\mathcal{S}_+^k$, 
 and  $(\mathcal{C}, \mathcal{D})$ is on the boundary of $U_{(\mathcal{A}, \mathcal{B})}$ if and only if either for some $i$ the matrix corresponding to $\mathcal{A}_i\mathcal{C}$ is psd and has rank smaller than $k$ or for some $j$ 
the matrix corresponding to 
$\mathcal{D}\mathcal{B}_j$ is psd and 
 has rank smaller than $k$. 

We  define $W_{(\mathcal{A},\mathcal{B})}$ to be the set of matrices $\mathcal{D}$ which correspond to $k$-infinitesimal motions of the psd factorization $M = \A \B$. 
The set $W_{(\mathcal{A},\mathcal{B})}$ is equal to the projection on the first factor of the tangent directions $(\mathcal{D},-\mathcal{D})$ such that $(I+t\mathcal{D},I-t\mathcal{D})$ is in the cone $U_{(\mathcal{A},\mathcal{B})}$ for $t \in [0,\varepsilon)$ for some $\varepsilon >0$. The definition of $W_{(\mathcal{A},\mathcal{B})}$ for $k$-inifinitesimal motions is analogous to the definition of the cone $P_{(\mathcal{A},\mathcal{B})}$ for $1$-infinitesimal motions.

\begin{lemma} \label{lem:inf-rigidity-implies-local-rigidity}
If a size-$2$ psd factorization of a matrix $M \in \mathcal{M}^{p \times q}_{3,2}$ is $1$-infinitesimally rigid, then it is locally rigid. 
\end{lemma}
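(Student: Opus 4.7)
The plan is to argue by contradiction using the real analytic curve selection lemma. Since $M$ has rank $3$, every psd factorization of $M$ sufficiently near $(\mathcal{A},\mathcal{B})$ has the form $(\mathcal{A}\mathcal{C},\mathcal{C}^{-1}\mathcal{B})$ for some $\mathcal{C}\in GL(3)$ close to $I$. Let $\mathcal{G}\subset GL(3)$ be the image of the representation $\phi:GL(2)\to GL(3)$, $S\mapsto \mathcal{S}$, which is a closed $4$-dimensional real algebraic subgroup (proper with finite kernel) whose tangent space at $I$ is $L_{(I,I)}\mathcal{M}_{GL(2)}$. If $(\mathcal{A},\mathcal{B})$ were not locally rigid, the semialgebraic set $N:=\{\mathcal{C}\in GL(3)\setminus\mathcal{G}:(\mathcal{A}\mathcal{C},\mathcal{C}^{-1}\mathcal{B})\text{ is a psd factorization of }M\}$ would contain $I$ in its closure, and the curve selection lemma would produce a real analytic curve $\mathcal{C}:[0,\varepsilon)\to GL(3)$ with $\mathcal{C}(0)=I$ and $\mathcal{C}(t)\in N$ for $t\in(0,\varepsilon)$.

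The next step is to extract a $1$-infinitesimal motion from this curve. Write $\mathcal{C}(t)=I+t^s\mathcal{D}+O(t^{s+1})$ for the smallest $s\ge 1$ with $\mathcal{D}\neq 0$; such $s$ exists since otherwise $\mathcal{C}(t)\equiv I\in\mathcal{G}$. Defining $\dot A^{(i)}:=\vec^{-1}((\mathcal{A}\mathcal{D})_i)$ and $\dot B^{(j)}:=-\vec^{-1}((\mathcal{D}\mathcal{B})_j)$, condition~(2) of \Cref{def:psd infinitesimal} is automatic since $\mathcal{A}\mathcal{D}\mathcal{B}-\mathcal{A}\mathcal{D}\mathcal{B}=0$. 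For condition~(3) I reparametrize by $u=t^s$: using $\mathcal{C}(t)^{-1}=I-t^s\mathcal{D}+O(t^{s+1})$, the factors take the form $A^{(i)}(u)=A^{(i)}+u\dot A^{(i)}+o(u)$ and $B^{(j)}(u)=B^{(j)}+u\dot B^{(j)}+o(u)$, and psd-ness of these factors for small $u>0$ forces every principal minor's leading coefficient to be nonnegative; this matches the $T^1$-positivity conditions exactly as in the leading-order argument from~\Cref{lemma:1-inf-motions-enough-to-consider-determinants}. By $1$-infinitesimal rigidity, $\mathcal{D}$ is $1$-trivial, so $\mathcal{D}=d\phi_I(D')$ for some $D'\in\mathrm{Mat}(2)$.

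I then iteratively push the deviation from $\mathcal{G}$ to higher order. Set $\mathcal{S}_1(t):=\phi(I+t^sD')\in\mathcal{G}$, so that $\mathcal{S}_1(t)=I+t^s\mathcal{D}+O(t^{s+1})$, and define $\mathcal{C}_2(t):=\mathcal{C}(t)\mathcal{S}_1(t)^{-1}$. Multiplication by $\mathcal{S}_1(t)^{-1}\in\mathcal{G}$ amounts to a $GL(2)$-action on the factorization, so $\mathcal{C}_2(t)$ still parametrizes psd factorizations; since $\mathcal{G}$ is a subgroup, $\mathcal{C}_2(t)\notin\mathcal{G}$ for $t\in(0,\varepsilon)$; and $\mathcal{C}_2(t)=I+O(t^{s+1})$. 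Applying the same construction to $\mathcal{C}_2(t)$ (using the same hypotheses) produces $\mathcal{C}_3(t)=I+O(t^{s+2})$, and iterating yields for every $n$ an analytic $\mathcal{C}_{n+1}(t)=I+O(t^{n+1})$ together with $\mathcal{T}_{n+1}(t):=\mathcal{S}_n(t)\cdots\mathcal{S}_1(t)\in\mathcal{G}$ such that $\mathcal{C}(t)=\mathcal{C}_{n+1}(t)\mathcal{T}_{n+1}(t)=\mathcal{T}_{n+1}(t)+O(t^{n+1})$. Representing $\mathcal{G}$ locally as the zero set of finitely many real analytic functions $g_1,\ldots,g_m$, we obtain $g_k(\mathcal{C}(t))=g_k(\mathcal{T}_{n+1}(t))+O(t^{n+1})=O(t^{n+1})$ for every $n$, so each real analytic function $g_k\circ\mathcal{C}$ vanishes to infinite order at $0$ and hence identically. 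Therefore $\mathcal{C}(t)\in\mathcal{G}$ near $0$, contradicting $\mathcal{C}(t)\notin\mathcal{G}$ for $t\in(0,\varepsilon)$. The main obstacle is the second step: ensuring that the leading Taylor coefficient at order $s>1$ satisfies the $T^1$-positivity conditions of a $1$-infinitesimal motion, which is resolved by the reparametrization $u=t^s$ promoting $\mathcal{D}$ to a first-order velocity.
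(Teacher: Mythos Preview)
Your proof is correct and shares the paper's high-level strategy---contrapositive plus the real analytic curve selection lemma---but is considerably more thorough. The paper simply takes the first-order tangent $\dot p(0)$ to the selected curve and asserts that it is a non-trivial $1$-infinitesimal motion, justifying non-triviality only by the fact that $p(t)$ lies outside the $\mathrm{GL}(2)$-orbit for $t>0$. You correctly recognize two subtleties this glosses over: the leading Taylor coefficient of $\mathcal{C}(t)-I$ may occur at some order $s>1$, and even when nonzero it may lie in $T_I\mathcal{G}$ without the curve ever entering $\mathcal{G}$ (a curve can be tangent to a submanifold without touching it). Your reparametrization $u=t^s$ handles the first issue, and your iterative peeling---multiplying by $\mathcal{S}_k(t)^{-1}\in\mathcal{G}$ to push the deviation from $\mathcal{G}$ to arbitrarily high order, then invoking analyticity of local defining functions for the closed subgroup $\mathcal{G}$---handles the second and produces a genuine contradiction. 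Thus both arguments follow the Asimow--Roth template, but yours supplies the higher-order analysis that the paper's sketch omits; what you gain is rigor, at the cost of a longer argument.
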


\begin{proof}
We recall that the tangent line at $(I,I)$ is an affine line and its elements are of the form $(I+D,I-D)$. The tangent directions are of the form $(D,-D)$ and lie on the linear subspace parallel to the tangent line. $1$-Infinitesimal motions correspond to tangent directions.

We will prove the contrapositive of the statement following the idea from~\cite[Proposition 1]{asimow1978rigidity}. Assume that a size-$2$ psd factorization $(A^{(1)}, \dots, A^{(p)}, B^{(1)}, \dots, B^{(q)})$ of $M$ is not locally rigid. This means that in every neighborhood of this psd factorization, there exists another size-$2$ psd factorization of $M$ that is not obtained by the $\operatorname{GL}(k)$-action. By~\cite[Lemma 18.3]{wallace1958algebraic}, there exists an analytic path $p(t) = (A^{(1)}(t), \dots, A^{(p)}(t), B^{(1)}(t), \dots, B^{(q)}(t))$ where $t \in [0,1]$ with $p(0)=(A^{(1)}, \dots, A^{(p)}, B^{(1)}, \dots, B^{(q)})$ and each $p(t)$ a psd factorization of $M$ that cannot be obtained from $(A^{(1)}, \dots, A^{(p)}, B^{(1)}, \dots, B^{(q)})$ by $\operatorname{GL}(k)$-action. We will consider the tangent line to this analytic path at $p(0)$ and claim that a tangent direction on the tangent line will give a non-trivial $1$-infinitesimal motion. There are non-zero tangent directions $(D,-D)$ on the tangent line that satisfy the conditions $T^1([A^{(i)}+t\dot A^{(i)}]_I) \geq 0$ and $T^1([B^{(j)}+t \dot B^{(j)}]_J) \geq 0$ for small enough $t>0$. Because the conditions are linear for $(D,-D)$, it is not possible that the analytic path $p(t)$ for $t\in [0,1]$ is contained in the cone defined by the conditions  but no segment of the tangent line of $p(t)$ different from the point $(I,I)$ is not. Finally, the tangent line does not contain non-zero $1$-trivial motions since $p(t)$ consists of psd factorizations of $M$ that cannot be obtained from $(A^{(1)}, \dots, A^{(p)}, B^{(1)}, \dots, B^{(q)})$ by $\operatorname{GL}(k)$-action. The non-zero tangent direction satisfying the condition 3 in the definition of an $1$-infinitesimal motion gives a non-trivial $1$-infinitesimal motion of the factorization $(A^{(1)},\ldots,A^{(p)},B^{(1)},\ldots,B^{(q)})$.
\end{proof}

\begin{lemma} \label{lemma:interior-of-P-is-subset-of-W}
Let $M \in \mathcal{M}^{p \times q}_{3,2}$ and let $(A^{(1)}, \dots, A^{(p)}, B^{(1)}, \dots, B^{(q)})$ be a size-$2$ psd factorization of $M$. Then the interior of $P_{(\mathcal{A},\mathcal{B})}$ is contained in $ W_{(\mathcal{A},\mathcal{B})}$.
\end{lemma}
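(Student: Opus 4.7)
The plan is to show that whenever $\underline{D}$ lies in the interior of $P_{(\mathcal{A},\mathcal{B})}$, the associated motion satisfies the defining conditions of a $2$-infinitesimal motion, so $\underline{D}$ belongs to $W_{(\mathcal{A},\mathcal{B})}$. The strategy is to observe that on the interior of $P_{(\mathcal{A},\mathcal{B})}$, the linear terms $\alpha_i^A(\underline{D})$ and $\alpha_j^B(\underline{D})$ are \emph{strictly} positive, and this is enough to force the linear term in $t$ to dominate the quadratic $\beta$-term in the determinants. I would organize the argument in three steps.

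First, I would verify the cheap conditions. From the derivation following \Cref{def:psd infinitesimal}, any $\underline{D} \in P_{(\mathcal{A},\mathcal{B})}$ gives rise to the motion $\dot{\mathcal{A}} = \mathcal{A}D$ and $\dot{\mathcal{B}} = -D\mathcal{B}$, so the inner-product condition $\langle \dot{A}^{(i)}, B^{(j)}\rangle + \langle A^{(i)}, \dot{B}^{(j)}\rangle = 0$ holds automatically, and the initial conditions $A^{(i)}(0) = A^{(i)}$, $B^{(j)}(0) = B^{(j)}$ are built into the definition. So the only thing to check is that each matrix $A^{(i)} + t\dot{A}^{(i)}$ and $B^{(j)} + t\dot{B}^{(j)}$ lies in $\mathcal{S}_+^2$ for $t \in [0,\varepsilon)$ with $\varepsilon > 0$ small enough (which, by the remark after \Cref{def:psd infinitesimal}, is the third condition when $s=k=2$).

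Second, I would dispense with the rank-two factors. By \Cref{lem:rank-2-factors-do-not-matter} (or directly: a rank-two $A^{(i)} \in \mathcal{S}_+^2$ is positive definite, so by continuity $A^{(i)} + t\dot{A}^{(i)}$ remains positive definite for small $t > 0$), these factors impose no constraints.

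Third, the key step: for a rank-one factor $A^{(i)} = a_i a_i^T$, a symmetric $2\times 2$ matrix is positive semidefinite if and only if both its trace and determinant are nonnegative. Since $A^{(i)} \neq 0$ is psd we have $\tr(A^{(i)}) = |a_i|^2 > 0$, so $\tr(A^{(i)} + t\dot{A}^{(i)}) > 0$ for small $t \geq 0$. As for the determinant, by \Cref{rmk: rank-1 factors determinant form} we have
\[
\det(A^{(i)} + t\dot{A}^{(i)}) = \alpha_i^A(\underline{D})\,t + \beta_i^A(\underline{D})\,t^2.
\]
Because $\underline{D}$ is in the interior of $P_{(\mathcal{A},\mathcal{B})}$, all defining inequalities of $P_{(\mathcal{A},\mathcal{B})}$ are strict at $\underline{D}$, so in particular $\alpha_i^A(\underline{D}) > 0$. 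Hence the linear term dominates and the determinant is strictly positive on some interval $(0,\varepsilon_i)$. Thus $A^{(i)} + t\dot{A}^{(i)}$ is positive definite (in particular psd) on this interval. The same argument applied to each rank-one $B^{(j)}$ gives analogous $\varepsilon_j' > 0$. Taking $\varepsilon$ to be the minimum of all these, together with the neighborhood on which rank-two factors remain positive definite, yields a single $\varepsilon > 0$ on which every factor stays psd. This shows $\underline{D} \in W_{(\mathcal{A},\mathcal{B})}$, as required.

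There is no real obstacle here; the only conceptual point is recognizing that the interior condition upgrades the weak inequalities $\alpha_i^A(\underline{D}) \geq 0$ to strict ones, which is exactly what is needed to absorb the unknown sign of the $\beta$-terms for $t$ small. The subtle bookkeeping around zero diagonal entries, which complicated \Cref{lemma:1-inf-motions-enough-to-consider-determinants}, is bypassed here by using the trace-determinant criterion instead of checking principal minors individually.
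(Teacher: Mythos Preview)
Your proof is correct and follows essentially the same approach as the paper: both use that an interior point of $P_{(\mathcal{A},\mathcal{B})}$ makes the linear coefficients $\alpha_i^A(\underline{D}),\alpha_j^B(\underline{D})$ strictly positive, so the linear term dominates the quadratic $\beta$-term for small $t$. Your use of the trace--determinant criterion (rather than checking all principal minors individually) is a minor but pleasant simplification that sidesteps the case analysis on zero diagonal entries.
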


\begin{proof}
A matrix $D$ in the interior of $P_{(\mathcal{A},\mathcal{B})}$ satisfies for some $\varepsilon >0$ and for all $t \in [0,\varepsilon)$  the inequalities $T^1([A^{(i)} + t \dot A^{(i)}]_I) > 0$ for all $i \in [p]$ and $I \subseteq [k]$ and $T^1([B^{(j)} + t \dot B^{(j)}]_J) > 0$ for all $j \in [q]$ and $J \subseteq [k]$. Therefore it also satisfies for some $\varepsilon' >0$ and for all $t \in [0,\varepsilon')$  the inequalities $T^2([A^{(i)} + t  \dot A^{(i)}]_I) > 0$ for all $i \in [p]$ and $I \subseteq [k]$ and $T^2([B^{(j)}+  t \dot B^{(j)}]_J) > 0$ for all $j \in [q]$ and $J \subseteq [k]$. This is equivalent to $D \in W_{(\mathcal{A},\mathcal{B})}$.
\end{proof}

 \begin{proposition} \label{prop:global-rigidity-implies-infinitesimal-rigidity-for-psd-rank-2-positive-matrices}
Let $M \in \mathcal{M}^{p \times q}_{3,2}$ have positive entries. If a size-$2$ psd factorization  of $M$ is locally rigid, then it is 2-infinitesimally rigid. 
\end{proposition}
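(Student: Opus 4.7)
I will prove the contrapositive: assuming the factorization is not $2$-infinitesimally rigid, I will construct a one-parameter family of psd factorizations of $M$ through $(\mathcal{A},\mathcal{B})$ that leaves the $\operatorname{GL}(2)$-orbit, thus violating local rigidity. The first step is to reduce to $1$-infinitesimal rigidity. Since $M$ has positive entries, no rank-one factors are orthogonal, so by \Cref{thm:no-orthogonal-pairs} the $1$- and $2$-infinitesimal rigidity notions coincide here. Hence the factorization is also $1$-infinitesimally flexible, and the proof of \Cref{thm:no-orthogonal-pairs} (through its Farkas/Carath\'eodory argument) yields that the cone $P_{(\mathcal{A},\mathcal{B})} \subset \mathbb{R}^9$ is full-dimensional. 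Its interior is therefore a nonempty open set and cannot lie in the $4$-dimensional subspace $L_{(I,I)}\mathcal{M}_{\operatorname{GL}(2)}$, so I may pick $D \in \operatorname{int}(P_{(\mathcal{A},\mathcal{B})})$ with $D \notin L_{(I,I)}\mathcal{M}_{\operatorname{GL}(2)}$ to serve as the direction of my motion.

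With this $D$, I form $C(t) := I + tD \in \operatorname{GL}(3)$ and consider the candidate curve $\gamma(t) := (\mathcal{A} C(t), C(t)^{-1}\mathcal{B})$; its product is automatically $\mathcal{A}\mathcal{B} = M$. The heart of the argument is showing $\gamma(t) \in \mathcal{SF}(M)$ for small $t>0$. On the $\mathcal{A}$-side this is direct: $\mathcal{A}_i C(t) = A^{(i)} + t\dot A^{(i)}$, and strictness of the defining inequalities of $P_{(\mathcal{A},\mathcal{B})}$ at $D$, together with \Cref{lemma:1-inf-motions-enough-to-consider-determinants} to cover the diagonal constraints, makes this matrix positive definite for small $t>0$.

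On the $\mathcal{B}$-side lies the main obstacle. One has $C(t)^{-1}\mathcal{B}_j = B^{(j)} + t\dot B^{(j)} + O(t^2)$, so replacing the clean linear motion $I - tD$ by the true inverse $(I+tD)^{-1}$ introduces an $O(t^2)$ perturbation that must be absorbed by the slack provided by interiority. I plan to do this quantitatively: interiority forces $\alpha_j^B(D)>0$ for each rank-one $B^{(j)}$, as well as strict positivity of any zero diagonal derivative, so the smallest eigenvalue of $B^{(j)} + t\dot B^{(j)}$ is bounded below by a positive multiple of $t$ (rank-one case) or by a positive constant (rank-two case) for small $t>0$. Weyl's inequality then shows that the $O(t^2)$ perturbation cannot destroy positive definiteness, giving $C(t)^{-1}\mathcal{B}_j \in \mathcal{S}_+^2$ for small $t>0$.

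To conclude, I invoke \Cref{prop:tangent-space-at-M}: the $\operatorname{GL}(2)$-orbit of $(\mathcal{A},\mathcal{B})$ corresponds to the image of a smooth embedding $\operatorname{GL}(2) \hookrightarrow \operatorname{GL}(3)$, a $4$-dimensional submanifold near $I$ whose tangent space there is exactly $L_{(I,I)}\mathcal{M}_{\operatorname{GL}(2)}$. Since $D$ is transverse to this tangent space, $C(t)$ (and hence $\gamma(t)$) exits the orbit for all sufficiently small $t>0$. Thus every neighborhood of $(\mathcal{A},\mathcal{B})$ in $\mathcal{SF}(M)$ contains a psd factorization outside the $\operatorname{GL}(2)$-orbit of $(\mathcal{A},\mathcal{B})$, so the factorization is not locally rigid.
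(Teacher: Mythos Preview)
Your proof is correct and follows the same overall strategy as the paper's: argue by contrapositive, use \Cref{thm:no-orthogonal-pairs} to get that $P_{(\mathcal{A},\mathcal{B})}$ is full-dimensional, and then exhibit psd factorizations arbitrarily close to $(\mathcal{A},\mathcal{B})$ outside the $\operatorname{GL}(2)$-orbit. The execution of the last step differs. The paper argues abstractly: it passes from $P_{(\mathcal{A},\mathcal{B})}$ to $W_{(\mathcal{A},\mathcal{B})}$ via \Cref{lemma:interior-of-P-is-subset-of-W}, deduces that $U_{(\mathcal{A},\mathcal{B})}$ (projected to the first factor) is $9$-dimensional, and then concludes by a dimension count that $F\cap U_{(\mathcal{A},\mathcal{B})}$ is $9$-dimensional and hence cannot lie in the $4$-dimensional orbit. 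You instead pick a single $D\in\operatorname{int}(P_{(\mathcal{A},\mathcal{B})})\setminus L_{(I,I)}\mathcal{M}_{\operatorname{GL}(2)}$ and build the explicit curve $C(t)=I+tD$; the eigenvalue estimate with Weyl's inequality to absorb the $O(t^2)$ error from $(I+tD)^{-1}$ is the genuinely new ingredient in your version, and it cleanly replaces the paper's more implicit dimension-count argument about $T_{(I,I)}F$ meeting the interior of $U_{(\mathcal{A},\mathcal{B})}$. Your route is more constructive (it hands you an actual non-orbit factorization), while the paper's is shorter once the ambient sets $F,U,W$ are in place.
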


\begin{proof}
We will prove the contrapositive: if a size-$2$ psd factorization $(A^{(1)}, \dots, A^{(p)}, B^{(1)}, \dots, B^{(q)})$ is $2$-infinitesimally flexible, then it is not locally rigid.
If  $(A^{(1)}, \dots, A^{(p)}, B^{(1)}, \dots, B^{(q)})$ is $2$-infinitesimally flexible, then the cone $P_{(\mathcal{A},\mathcal{B})} = \{\underline{D} \in \R^9 \mid C_{(\mathcal{A},\mathcal{B})} \underline{D} \geq 0 \}$, where the matrix $C_{(\mathcal{A},\mathcal{B})}$ is as in~\Cref{lem:matrix-C}, is full-dimensional by the proof of~\Cref{thm:no-orthogonal-pairs}. It follows that $W_{(\mathcal{A},\mathcal{B})}$ is $9$-dimensional since every element in the interior of $P_{(\mathcal{A},\mathcal{B})}$ belongs to $W_{(\mathcal{A},\mathcal{B})}$ by~\Cref{lemma:interior-of-P-is-subset-of-W}. 
We recall that $W_{(\mathcal{A},\mathcal{B})}$ is the projection to the first factor of those tangent directions $(D,-D)$ such that $(I+tD,I-tD)$ is in  $U_{(\mathcal{A},\mathcal{B})}$ for small enough $t>0$. 
Thus the dimension of $U_{(\mathcal{A},\mathcal{B})}$ restricted to the first factor is at least the dimension of $W_{(\mathcal{A},\mathcal{B})}$. Since the dimension of $U_{(\mathcal{A},\mathcal{B})}$ restricted to the first factor is at most nine due to the number of free variables, it is exactly nine.  Thus the tangent space $T_{(I,I)} F$ intersects $U_{(\mathcal{A},\mathcal{B})}$ in its interior (if it would intersect $U_{(\mathcal{A},\mathcal{B})}$ only on the boundary, then the dimension of $W_{(\mathcal{A},\mathcal{B})}$ would be less than nine). Hence $F$ intersects $U_{(\mathcal{A},\mathcal{B})}$ in its interior as well. This intersection is $9$-dimensional, but the set of size-$2$ psd factorizations obtained by $\operatorname{GL}(2)$-action is just $4$-dimensional, therefore the factorization $(A^{(1)}, \dots, A^{(p)}, B^{(1)}, \dots, B^{(q)})$ is not locally rigid.
\end{proof}

With all the results we have proved above we can complete the proof of~\cref{thm:equivalence-of-rigidities}. 
\begin{proof}[Proof of~\cref{thm:equivalence-of-rigidities}]
The equivalence of a) and b) follows from~\Cref{thm:no-orthogonal-pairs}. The condition a) implies c) by~\Cref{lem:inf-rigidity-implies-local-rigidity}. The equivalence of c) and d) follows from~\Cref{lem:local-rigidity-implies-global-rigidity}. Finally, from~\Cref{prop:global-rigidity-implies-infinitesimal-rigidity-for-psd-rank-2-positive-matrices} we get that c) implies b). 
\end{proof}

\subsection{Rigidity for matrices with one zero} \label{section:local_and_global_rigidity}

In this subsection, we assume that $M$ has one zero or equivalently that the size-2 psd factorization has one orthogonal pair. We restrict to the boundary $M_{11}=0$ and to the corresponding space of factorizations. For this consider the linear space
\[
L_{(\mathcal{A},\mathcal{B})} = \{(C,D) \in \mathbb{R}^{\binom{k+1}{2} \times \binom{k+1}{2}} \times \mathbb{R}^{\binom{k+1}{2} \times \binom{k+1}{2}}: C_{1,2}=C_{1,3}=C_{3,2}=D_{1,2}=D_{1,3}=D_{3,2}=0\}.
\]
We define the sets $F'=F \cap L_{(\mathcal{A},\mathcal{B})}$ and $U'_{(\mathcal{A},\mathcal{B})} = U_{(\mathcal{A},\mathcal{B})}\cap L_{(\mathcal{A},\mathcal{B})}$. The set $W'_{(\mathcal{A},\mathcal{B})}$ is the projection to the first factor of those tangent directions $(D,-D)$  such that $(I+tD,I-tD)$ is in  $U'_{(\mathcal{A},\mathcal{B})}$ for $t \in [0,\varepsilon)$ for some $\varepsilon >0$.  The inverse of an invertible $3 \times 3$ matrix $C$ with $C_{1,2}=C_{1,3}=C_{3,2}=0$ has zeros at the same coordinates, i.e., $C^{-1}_{1,2}=C^{-1}_{1,3}=C^{-1}_{3,2}=0$. Hence the dimensions of $F'$ and $F'$ restricted to the first factor are six. Moreover, since $F$ intersects $L_{(\mathcal{A},\mathcal{B})}$ transversally, then $T_{(I,I)}F'=T_{(I,I)}F \cap \left((I,I) + L_{(\mathcal{A},\mathcal{B})}\right)$.

\begin{proposition} \label{prop:one-zero-global-rigidity-implies-infinitesimal-rigidity}
Let $M \in \mathcal{M}^{p \times q}_{3,2}$ have one zero. If a size-$2$ psd factorization  of $M$ is locally and hence globally rigid, then it is $2$-infinitesimally rigid. 
\end{proposition}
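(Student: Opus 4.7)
The plan is to mirror the strategy of \cref{prop:global-rigidity-implies-infinitesimal-rigidity-for-psd-rank-2-positive-matrices} by proving the contrapositive: if the factorization is $2$-infinitesimally flexible, then it cannot be locally rigid. First I would apply an $O(2)$-change of basis exactly as in the proof of \cref{prop:one-orth} to put the orthogonal pair into the form $a_1=(a_{11},0)$, $b_1=(0,b_{12})$, so that the constraints $D_{12}=D_{13}=D_{32}=0$ derived there are forced on every $2$-infinitesimal motion and we can work inside $L_{(\mathcal{A},\mathcal{B})}$. By \cref{thm:one-orthogonal-pair}, $2$-infinitesimal flexibility is equivalent to the failure of \eqref{eqn:6 factors} for every choice of six rank-one factors. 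The Farkas-plus-Caratheodory step inside the proof of that theorem then upgrades this to the statement that the cone $\bar{P}_{(\mathcal{A},\mathcal{B})} = \{\underline{D}\in\R^6 \mid \bar{C}_{(\mathcal{A},\mathcal{B})}\underline{D}\geq 0\}$ is full-dimensional, i.e.\ $6$-dimensional inside $L_{(\mathcal{A},\mathcal{B})}$.

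The next step is an analog of \cref{lemma:interior-of-P-is-subset-of-W}: the interior of $\bar{P}_{(\mathcal{A},\mathcal{B})}$ is contained in $W'_{(\mathcal{A},\mathcal{B})}$. For any $\underline{D}$ in this interior, the strict inequalities $\alpha_i^A(D)>0$ and $\alpha_j^B(D)>0$ for $i,j\geq 2$ imply $T^2([A^{(i)}+t\dot{A}^{(i)}]_{\{1,2\}})>0$ and $T^2([B^{(j)}+t\dot{B}^{(j)}]_{\{1,2\}})>0$ for small $t>0$; the orthogonal pair $A^{(1)}, B^{(1)}$ contributes only identically vanishing minors once $D_{12}=D_{13}=D_{32}=0$ (since $\dot{A}^{(1)}$ is then a scalar multiple of $A^{(1)}$, and similarly for $B^{(1)}$); and rank-two factors remain positive definite for small $t$. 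Consequently $W'_{(\mathcal{A},\mathcal{B})}$ has dimension $6$.

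From here the argument of \cref{prop:global-rigidity-implies-infinitesimal-rigidity-for-psd-rank-2-positive-matrices} transports verbatim to the primed setting: the projection of $U'_{(\mathcal{A},\mathcal{B})}$ onto the first factor is $6$-dimensional, so the $6$-dimensional tangent space $T_{(I,I)}F'$ meets $U'_{(\mathcal{A},\mathcal{B})}$ in its interior, and therefore $F'$ itself meets $U'_{(\mathcal{A},\mathcal{B})}$ in a $6$-dimensional neighborhood of $(I,I)$. The final ingredient is a dimension comparison with the restricted $\operatorname{GL}(2)$-orbit. A direct calculation with the induced map $S\mapsto \mathcal{S}$ shows that $\mathcal{S}\in L_{(\mathcal{A},\mathcal{B})}$ forces $s_{12}^2=\mathcal{S}_{12}=0$, so the intersection of the $\operatorname{GL}(2)$-orbit of $(A^{(1)},\ldots,B^{(q)})$ with $F'$ is parametrised by the $3$-dimensional subgroup of invertible lower-triangular $2\times 2$ matrices. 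Since $6>3$, there are psd factorizations of $M$ arbitrarily close to $(A^{(1)},\ldots,B^{(q)})$ that are not in its orbit, contradicting local rigidity.

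The main obstacle I anticipate is justifying the local passage from $T_{(I,I)}F'\cap U'_{(\mathcal{A},\mathcal{B})}$ being $6$-dimensional to $F'\cap U'_{(\mathcal{A},\mathcal{B})}$ being $6$-dimensional near $(I,I)$ in this boundary setting. Because the rank-one factors $A^{(1)}$ and $B^{(1)}$ are already on the boundary of $\mathcal{S}^2_+$ at $t=0$, one must check that $U'_{(\mathcal{A},\mathcal{B})}$ genuinely has non-empty interior in $L_{(\mathcal{A},\mathcal{B})}$ near $(I,I)$. This is precisely what the previous step secures via the strict inequalities $\alpha_i^A(D), \alpha_j^B(D)>0$ for $i,j\geq 2$, together with the observation that $A^{(1)}$ and $B^{(1)}$ impose only the linear constraints defining $L_{(\mathcal{A},\mathcal{B})}$ and no further active inequalities.
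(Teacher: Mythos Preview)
Your proposal is correct and follows essentially the same route as the paper's own proof: contrapositive, normalize via $O(2)$ so that $a_1=(a_{11},0)$ and $b_1=(0,b_{12})$, use the proof of \cref{thm:one-orthogonal-pair} to get that $\bar{P}_{(\mathcal{A},\mathcal{B})}$ is $6$-dimensional, show its interior lies in $W'_{(\mathcal{A},\mathcal{B})}$, and then pass from the tangent picture to $F'\cap U'_{(\mathcal{A},\mathcal{B})}$ being $6$-dimensional near $(I,I)$. Your explicit identification of the $\operatorname{GL}(2)$-orbit inside $L_{(\mathcal{A},\mathcal{B})}$ with the $3$-dimensional group of lower-triangular matrices is a nice clarification that the paper leaves implicit when it simply concludes ``not locally rigid.''
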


\begin{proof}
Without loss of generality, we assume that $M_{11}=0$.  We will prove the contrapositive of the statement, namely, if a a size-$2$ psd factorization  $(A^{(1)}, \dots, A^{(p)}, B^{(1)}, \dots, B^{(q)})$ is $2$-infinitesimally flexible, then it is not locally rigid.  First we prove the claim for $a_1 = (a_{11}, 0)$ and $b_1 = (0,b_{12})$, so that we can work with the same cone $\bar{P}_{(\mathcal{A},\mathcal{B})} = \{\underline{D} \in \R^6 \mid \bar{C}_{(\mathcal{A},\mathcal{B})} \underline{D} \geq 0 \}$ as in the proof of~\Cref{thm:one-orthogonal-pair}.
If a size-$2$ psd factorization  $(A^{(1)}, \dots, A^{(p)}, B^{(1)}, \dots, B^{(q)})$ is $2$-infinitesimally flexible, then the cone $\bar{P}_{(\mathcal{A},\mathcal{B})}$ is full-dimensional by the proof of~\Cref{thm:one-orthogonal-pair}. 

We first show that every element in the interior of $\bar{P}_{(\mathcal{A},\mathcal{B})}$ belongs to $W'_{(\mathcal{A},\mathcal{B})}$.  We recall that $W'_{(\mathcal{A},\mathcal{B})}$ is the projection to the first factor of those tangent directions $(D,-D)$  such that $(I+tD,I-tD)$ is in  $U'_{(\mathcal{A},\mathcal{B})}$ for small enough $t>0$.
A matrix $D$ corresponding to a point in the interior of $\bar{P}_{(\mathcal{A},\mathcal{B})}$ satisfies for some $\varepsilon >0$ and for all $t \in [0,\varepsilon)$  the inequalities $T^1([A^{(i)} + t \dot A^{(i)}]_I) > 0$ for all $i \in \{2,\ldots,p\}$ and $I \subseteq [k]$ and $T^1([B^{(j)} + t \dot B^{(j)}]_J) > 0$ for all $j \in \{2,\ldots,q\}$ and $J \subseteq [k]$. Therefore it also satisfies for some $\varepsilon' >0$ and for all $t \in [0,\varepsilon')$  the inequalities $T^2([A^{(i)} + t  \dot A^{(i)}]_I) > 0$ for all $i \in \{2,\ldots,p\}$ and $I \subseteq [k]$ and $T^2([B^{(j)}+  t \dot B^{(j)}]_J) > 0$ for all $j \in \{2,\ldots,q\}$ and $J \subseteq [k]$. It was shown in the proof of~\Cref{prop:one-orth} that under the assumption $D_{12} = D_{13} = D_{32} = 0$, we have $T^2([A^{(1)} + t \dot A^{(1)}]_I) = T^2([B^{(1)} + t \dot B^{(1)}]_J = 0$ for all $I,J \subseteq [k]$ and for all $t \in \R$. Therefore $D$ gives a $2$-infinitesimal motion and hence it is in $W'_{(\mathcal{A},\mathcal{B})}$.

Since every element in the interior of $\bar{P}_{(\mathcal{A},\mathcal{B})}$ belongs to $W'_{(\mathcal{A},\mathcal{B})}$ and $\bar{P}_{(\mathcal{A},\mathcal{B})}$ is 6-dimensional, then $W'_{(\mathcal{A},\mathcal{B})}$ and $W'_{(\mathcal{A},\mathcal{B})}$ are $6$-dimensional. The dimension of $U'_{(\mathcal{A},\mathcal{B})}$ restricted to the first factor is at least the dimension of $W'_{(\mathcal{A},\mathcal{B})}$. Since the dimension of $U'_{(\mathcal{A},\mathcal{B})}$ restricted to the first factor is at most six due to the number of free variables, it is exactly six.  This means that the tangent space $T_{(I,I)} F$ intersects $U'_{(\mathcal{A},\mathcal{B})}$ in its relative interior (if it would intersect $U'_{(\mathcal{A},\mathcal{B})}$ only on the relative boundary, then the dimension of $W'_{(\mathcal{A},\mathcal{B})}$ would be less than six). Since  $T_{(I,I)} F' = T_{(I,I)} F \cap \left((I,I)+ L_{(\mathcal{A},\mathcal{B})} \right)$, it follows that the tangent space $T_{(I,I)} F'$ intersects $U'_{(\mathcal{A},\mathcal{B})}$ in its relative interior. Hence $F'$ intersects $U'_{(\mathcal{A},\mathcal{B})}$ in its relative interior as well. Thus the factorization $(A^{(1)}, \dots, A^{(p)}, B^{(1)}, \dots, B^{(q)})$ is not locally rigid.

In the general case,  by \cref{lemma:rotation-gives-psd-factorization}, we can find an orthogonal matrix $S \in O(2) \subset \mathrm{GL}(2)$, such that 
        \begin{align*}
            {\tilde a}_1 = S^Ta_1 = \begin{pmatrix}
                1 \\ 0
            \end{pmatrix} \mbox{ and }
            {\tilde b}_1 = S^{-1} b_1 = \begin{pmatrix}
                0 \\ 1
            \end{pmatrix}.
        \end{align*}
Then $\tilde A^{(i)} = S^T A^{(i)} S, \ \tilde B^{(j)} = S^{-1} B^{(j)} S^{-T}$ is a psd factorization of $M$. As it was discussed in the proof of~\Cref{prop:one-orth}, the psd factorization $A^{(1)},\ldots,B^{(3)}$ is $2$-infinitesimally rigid if and only if the psd factorization $\tilde A^{(1)},\ldots, \tilde B^{(3)}$ is $2$-infinitesimally rigid. The same is true for local rigidity by~\Cref{lemma:rotation-gives-psd-factorization}. This completes the proof for the general case.
\end{proof}

Finally we are ready to prove our main theorem. 
\begin{proof}[Proof of~\Cref{main_theorem}]
By~\Cref{thm:equivalence-of-rigidities}, a size-2 psd factorization of $M \in \mathcal{M}_{3,2}^{p \times q}$ with positive entries is globally rigid or equivalently unique up to $\operatorname{GL}(2)$-action if and only if it is $2$-infinitesimally rigid.  The characterization of $2$-infinitesimally rigid factorizations of matrices $M \in \mathcal{M}^{p \times q}_{3,2}$ with no zeros is given in~\Cref{thm:no-orthogonal-pairs}. For matrices in $\mathcal{M}^{p \times q}_{3,2}$ with one zero, \Cref{prop:one-zero-global-rigidity-implies-infinitesimal-rigidity} proves that global rigidity implies $2$-infinitesimal rigidity. 
The characterization of $2$-infinitesimally rigid factorizations of matrices $M \in \mathcal{M}^{p \times q}_{3,2}$ with one zero is given in~\Cref{thm:one-orthogonal-pair}. When we set $\langle a_1,b_1 \rangle=0$ in~\eqref{eqn:Farkas_lemma_conditions_pxq}, the left-hand sides of the inequalities~\eqref{eqn:Farkas_lemma_conditions_pxq} give the left-hand sides of the inequalities~\eqref{eqn:6 factors} together with two infinities. Under our assumptions on the determinants and zero entries, the left-hand sides of~\eqref{eqn:Farkas_lemma_conditions_pxq} and~\eqref{eqn:6 factors} never evaluate to zero. Hence we replace the strict inequalities by non-strict inequalities. Moreover, to avoid division by zero, we switch the denominators and numerators of the expressions. 
\end{proof}

\begin{remark}
If $M$ has at least two zeros, then all its size-2 psd factorizations with at least three rank-$1$ $A$-factors and at least three rank-$1$ $B$-factors are $1$- and $2$-infinitesimally rigid by~\Cref{thm:two-orth-inf-rigid}. 
When we set $\langle a_1, b_1 \rangle = \langle a_2, b_2 \rangle = 0$ in~\eqref{eqn:Farkas_lemma_conditions_pxq}, then all but the left-hand side of the middle inequality of ~\eqref{eqn:Farkas_lemma_conditions_pxq} evaluates to infinity. However, it can be checked that the middle inequality can never be negative. Therefore the inequalities~\eqref{eqn:uniqueness_conditions} characterize the $1$- and $2$-infinitesimally rigidity in all cases.
\end{remark}

\subsection{Non-trivial boundaries} \label{section:boundaries}

In this subsection, we study the connection between the boundary of $\M^{p \times q}_{\binom{k+1}{2}, k}$ and rigidity
of psd factorizations. A nonnegative matrix with a zero entry is always on the boundary. In other words, the equality $M_{ij} = 0$ always defines a subset of the boundary of $\M^{p \times q}_{\binom{k+1}{2}, k}$. However, there are matrices $M$ with positive entries that are also on the boundary.

\begin{proposition}\label{prop:psd-factorization-with-size-k-factors}
A positive $M \in \mathcal{M}^{p \times q}_{\binom{k+1}{2},k}$ has a size-$k$ psd factorization where all factors have rank $k$ if and only if the set of size-$k$ psd factorizations of $M$ contains a non-empty subset that is open in the Euclidean subspace topology of $\mu^{-1}(M)$. 
\end{proposition}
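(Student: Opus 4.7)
The proof splits naturally into the two directions, with the forward one being essentially immediate and the reverse requiring a density argument that exploits the structure of $\mu^{-1}(M)$.

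For the forward direction, suppose $(A^{(1)},\ldots,A^{(p)},B^{(1)},\ldots,B^{(q)})$ is a size-$k$ psd factorization in which every factor has rank $k$, hence lies in the interior of $\mathcal{S}_+^k$. Since positive definiteness is an open condition in the Euclidean topology on $\mathcal{S}^k$, every pair $(\mathcal{A}',\mathcal{B}')$ in a sufficiently small Euclidean neighborhood of $(\mathcal{A},\mathcal{B})$ in $\mu^{-1}(M)$ still has positive definite factors, giving a non-empty open subset of $\mu^{-1}(M)$ contained in the set of psd factorizations.

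For the reverse direction, I would first record that $\mu^{-1}(M)$ is a single orbit of the $\operatorname{GL}\bigl(\binom{k+1}{2}\bigr)$-action $(\mathcal{A},\mathcal{B}) \mapsto (\mathcal{A}\mathcal{S}, \mathcal{S}^{-1}\mathcal{B})$, and in particular a smooth algebraic variety of dimension $\binom{k+1}{2}^2$: given any two preimages, $\mathcal{A}_0$ and $\mathcal{A}$ are bases of the column space of $M$, and the change-of-basis matrix $\mathcal{S}$ with $\mathcal{A}_0 \mathcal{S} = \mathcal{A}$ is invertible and automatically satisfies $\mathcal{S}^{-1}\mathcal{B}_0 = \mathcal{B}$. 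Fix a base point $(\mathcal{A}_0, \mathcal{B}_0) \in \mu^{-1}(M)$ and pull each $\det(A^{(i)})$ and $\det(B^{(j)})$ back along the orbit parameterization $\mathcal{S} \mapsto (\mathcal{A}_0\mathcal{S}, \mathcal{S}^{-1}\mathcal{B}_0)$. Since $M$ has strictly positive entries, every row $(\mathcal{A}_0)_i$ and every column $(\mathcal{B}_0)_j$ is non-zero, so as $\mathcal{S}$ varies the vector $(\mathcal{A}_0)_i \mathcal{S}$ (respectively $\mathcal{S}^{-1}(\mathcal{B}_0)_j$) sweeps out all of $\R^{\binom{k+1}{2}}\setminus\{0\}$ and in particular can be chosen to correspond, via the symmetric-matrix vectorization, to a rank-$k$ symmetric matrix. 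Hence each of these determinants is a polynomial function on $\mu^{-1}(M)$ that is not identically zero, so its real zero locus is a proper algebraic subvariety of $\mu^{-1}(M)$ and therefore nowhere dense in the Euclidean topology. A non-empty Euclidean open subset $U \subseteq \mu^{-1}(M)$ contained in the set of psd factorizations must then meet the complement of the finite union of these zero loci, producing a psd factorization at which every factor has non-zero determinant, i.e., rank $k$.

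The step requiring the most care is the non-vanishing of $\det(A^{(i)})$ and $\det(B^{(j)})$ as polynomials on $\mu^{-1}(M)$; this is the only place the positivity of $M$ enters, as it prevents any row of $\mathcal{A}_0$ or column of $\mathcal{B}_0$ from being forced to vanish. Once that is secured, the conclusion reduces to the standard fact that on a smooth equidimensional real algebraic variety a finite union of proper algebraic subvarieties has empty interior in the Euclidean topology.
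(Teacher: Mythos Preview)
Your argument is correct and follows essentially the same route as the paper: both identify $\mu^{-1}(M)$ with $\operatorname{GL}\!\bigl(\binom{k+1}{2}\bigr)$ (the paper via the set $F=\{(\mathcal{C},\mathcal{C}^{-1})\}$, you via the orbit parameterization) and then argue that the locus where some factor drops rank is a proper algebraic subvariety, hence cannot contain a non-empty Euclidean-open set. The paper phrases the key step as ``the entries of $F$ projected to either factor are algebraically independent, so no defining polynomial of $\partial U_{(\mathcal{A},\mathcal{B})}$ can vanish on $F$,'' whereas you verify directly that each $\det A^{(i)}$ and $\det B^{(j)}$ is a non-vanishing regular function on the fiber; your version has the minor advantage of making explicit that positivity of $M$ is used exactly to guarantee that no row of $\mathcal{A}_0$ or column of $\mathcal{B}_0$ is zero, which is what the paper's argument needs but leaves implicit.
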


\begin{proof}
We first prove that $F$ is not contained in the boundary of $U_{(\mathcal{A},\mathcal{B})}$. The entries of $F$ projected to either factor are algebraically independent, because $F$ was defined as the graph of the function that gives the inverse of $\binom{k+1}{2} \times \binom{k+1}{2}$ invertible matrices. The boundary of $U_{(\mathcal{A},\mathcal{B})}$ is contained in the union of hypersurfaces corresponding to the defining polynomial inequalities of $U_{(\mathcal{A},\mathcal{B})}$. The algebraic independence of the entries of the elements in $F$ implies that no such polynomial can vanish on all of $F$. 
Hence $F$ cannot be contained in the boundary of $U_{(\mathcal{A},\mathcal{B})}$.

Now assume that the set of size-$k$ psd factorizations of $M$ contains a non-empty subset that is open in the Euclidean subspace topology of $\mu^{-1}(M)$. By the above argument, $F$ must intersect $U_{(\mathcal{A},\mathcal{B})}$ in its interior, and hence $M$ has a size-$k$ psd factorization where all factors have rank $k$. Conversely, if $M$ has a size-$k$ psd factorization where all factors have rank $k$, then $F$ intersects $U_{(\mathcal{A},\mathcal{B})}$ in the interior of $U_{(\mathcal{A},\mathcal{B})}$ and $F \cap \text{int} (U_{(\mathcal{A},\mathcal{B})})$ gives the desired open set. 
\end{proof}

\begin{corollary}\label{cor:rank-k-factors-imply-not-globally-rigid}
If a positive $M \in \mathcal{M}^{p \times q}_{\binom{k+1}{2},k}$ has a size-$k$ psd factorization where all factors have rank $k$, then $M$ does not have a size-$k$ globally rigid psd factorization. If $k=2$, then the latter is equivalent to $M$ not having a size-$2$ $1$-infinitesimally, $2$-infinitesimally or locally rigid psd factorization.
\end{corollary}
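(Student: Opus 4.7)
The plan is to leverage \Cref{prop:psd-factorization-with-size-k-factors} together with a dimension count. Assume $M$ satisfies the hypothesis. By~\Cref{prop:psd-factorization-with-size-k-factors}, the set of size-$k$ psd factorizations of $M$ contains a non-empty open subset $\mathcal{O}$ of $\mu^{-1}(M)$ in its Euclidean subspace topology. Fixing any one size-$k$ psd factorization $(\mathcal{A},\mathcal{B})$ of $M$ with all factors of rank $\binom{k+1}{2}$, every other element of $\mu^{-1}(M)$ is of the form $(\mathcal{A}\mathcal{C}, \mathcal{C}^{-1}\mathcal{B})$ for some invertible $\binom{k+1}{2}\times\binom{k+1}{2}$ matrix $\mathcal{C}$, because $\mathcal{A}$ and $\mathcal{B}$ have full rank $\binom{k+1}{2}$. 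Hence $\mu^{-1}(M)$ is in bijection with $F$ and is an irreducible smooth manifold of dimension $\binom{k+1}{2}^2$, so $\mathcal{O}$ also has dimension $\binom{k+1}{2}^2$.

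Next I would compare this to the dimension of a $\operatorname{GL}(k)$-orbit in $\mu^{-1}(M)$, which is at most $\dim \operatorname{GL}(k) = k^2$. For $k\geq 2$ we have
\[
\binom{k+1}{2}^{2} = \left(\frac{k(k+1)}{2}\right)^{2} > k^{2},
\]
so the $\operatorname{GL}(k)$-orbit of any given psd factorization has strictly smaller dimension than $\mathcal{O}$ and therefore cannot contain $\mathcal{O}$. Consequently, $\mathcal{O}$ contains size-$k$ psd factorizations of $M$ that do not lie in the $\operatorname{GL}(k)$-orbit of any chosen factorization. This shows that no size-$k$ psd factorization of $M$ can be globally rigid, establishing the first part of the corollary.

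For the second statement, once $k=2$ is fixed and $M$ is positive, \Cref{thm:equivalence-of-rigidities} asserts that $1$-infinitesimal, $2$-infinitesimal, local and global rigidity of a size-$2$ psd factorization of $M$ are all equivalent. Consequently, the nonexistence of a globally rigid factorization is equivalent to the nonexistence of a factorization rigid in any of the other three senses, yielding the claim directly.

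The only subtle point is the dimension equality $\dim \mu^{-1}(M)=\binom{k+1}{2}^{2}$, which relies on the fact that $M\in \mathcal{M}^{p\times q}_{\binom{k+1}{2},k}$ forces both $\mathcal{A}$ and $\mathcal{B}$ in any size-$k$ psd factorization to have full rank $\binom{k+1}{2}$, so the identification of $\mu^{-1}(M)$ with $F$ is global. Everything else is a clean consequence of \Cref{prop:psd-factorization-with-size-k-factors} and \Cref{thm:equivalence-of-rigidities}; no new technical obstacle arises.
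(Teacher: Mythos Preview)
Your argument is correct and essentially identical to the paper's: both reduce to the dimension comparison $\dim \mu^{-1}(M)=\binom{k+1}{2}^2 > k^2 = \dim\operatorname{GL}(k)$ combined with \Cref{prop:psd-factorization-with-size-k-factors}, and both invoke \Cref{thm:equivalence-of-rigidities} for the $k=2$ clause (the paper phrases the first part as a contrapositive, but the content is the same). One small wording slip: when you write ``all factors of rank $\binom{k+1}{2}$'' you mean that $\mathcal{A}$ and $\mathcal{B}$ have rank $\binom{k+1}{2}$, which follows already from $\rank(M)=\binom{k+1}{2}$ and does not require the individual $A^{(i)},B^{(j)}$ to have full rank.
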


\begin{proof}
We prove the contrapositive. If $M$ has a size-$k$ psd factorization that is globally rigid, then all size-$k$ psd factorizations of $M$ are obtained by the $\operatorname{GL}(k)$-action. Therefore, the dimension of the set of all size-$k$ psd factorizations of $M$ is $k^2$. But then this set does not contain a non-empty subset that is open in the Euclidean subspace topology of $\mu^{-1}(M)$ because the latter set has dimension $\binom{k+1}{2}^2$. By~\Cref{prop:psd-factorization-with-size-k-factors}, $M$ does not have a size-$k$ psd factorization where all factors have rank $k$.  The last statement follows from \Cref{thm:equivalence-of-rigidities}.
\end{proof}

\begin{proposition} \label{prop:not_inf_rigid_implies_rank_two_factors}
If a positive $M \in \mathcal{M}^{p \times q}_{3,2}$ has a size-$2$ psd factorization that is $1$-infinitesimally flexible, 
then $M$ has a size-$2$ psd factorization where all factors have rank $2$.
\end{proposition}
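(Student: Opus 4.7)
The plan is to argue that $1$-infinitesimal flexibility forces the cone of $1$-infinitesimal motions to be full-dimensional, and then to reuse the dimension-counting argument in the proof of \Cref{prop:global-rigidity-implies-infinitesimal-rigidity-for-psd-rank-2-positive-matrices} to show that $F$ must meet the interior of $U_{(\mathcal{A},\mathcal{B})}$. A point in this interior will then correspond to a size-$2$ psd factorization of $M$ all of whose factors are positive definite, i.e., have rank $2$.

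First I would establish that the cone $P_{(\mathcal{A},\mathcal{B})}$ is full-dimensional. By \Cref{lem:rank-2-factors-do-not-matter}, the restriction of the factorization to its rank-$1$ factors is also $1$-infinitesimally flexible, and then the proof of \Cref{thm:no-orthogonal-pairs}, which combines \Cref{thm:  3+3 no orthogonal pairs case}, \Cref{lem:4A_and_2B}, and Carathéodory's theorem, yields full-dimensionality of the cone attached to the rank-$1$ factors. Since any rank-$2$ factor $A^{(i)}$ satisfies its psd conditions strictly at $t=0$, it contributes no additional inequality to $P_{(\mathcal{A},\mathcal{B})}$, so the same full-dimensional cone describes the $1$-infinitesimal motions of the whole factorization.

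Next I would apply \Cref{lemma:interior-of-P-is-subset-of-W}, which places the interior of $P_{(\mathcal{A},\mathcal{B})}$ inside $W_{(\mathcal{A},\mathcal{B})}$; this shows that $W_{(\mathcal{A},\mathcal{B})}$ has dimension $9$. I would then mirror the dimension-counting argument from the proof of \Cref{prop:global-rigidity-implies-infinitesimal-rigidity-for-psd-rank-2-positive-matrices}: the projection of $U_{(\mathcal{A},\mathcal{B})}$ to its first factor has dimension at most $9$, so it is exactly $9$, which forces $T_{(I,I)}F$, and hence $F$ itself, to meet $U_{(\mathcal{A},\mathcal{B})}$ in its Euclidean interior.

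To conclude I would unwind the definition of $U_{(\mathcal{A},\mathcal{B})}$. A point $(\mathcal{C},\mathcal{C}^{-1}) \in F$ lying in the interior of $U_{(\mathcal{A},\mathcal{B})}$ satisfies $\mathcal{C} \in \operatorname{int}(P_{\mathcal{A}_i})$ for every $i$ and $\mathcal{C}^{-1} \in \operatorname{int}(P_{\mathcal{B}_j})$ for every $j$. Since $\operatorname{int}(\mathcal{S}_+^2)$ consists exactly of positive-definite matrices, each factor $\mathcal{A}_i \mathcal{C}$ and $\mathcal{C}^{-1} \mathcal{B}_j$ corresponds to a rank-$2$ symmetric psd matrix, so $(\mathcal{A}\mathcal{C},\mathcal{C}^{-1}\mathcal{B})$ is the desired size-$2$ psd factorization of $M$ with only rank-$2$ factors. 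The delicate bookkeeping step that I expect to be the main obstacle is linking the sub-factorization of rank-$1$ factors, to which \Cref{thm:no-orthogonal-pairs} directly applies, back to the whole factorization of $M$; this is handled by the observation that rank-$2$ factors impose only strict open conditions near $(I,I)$ and therefore do not further restrict $P_{(\mathcal{A},\mathcal{B})}$ beyond the rank-$1$ constraints.
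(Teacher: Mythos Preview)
Your proposal is correct and follows essentially the same route as the paper: show $P_{(\mathcal{A},\mathcal{B})}$ is full-dimensional via the proof of \Cref{thm:no-orthogonal-pairs}, pass to $W_{(\mathcal{A},\mathcal{B})}$ by \Cref{lemma:interior-of-P-is-subset-of-W}, and deduce that $F$ meets the interior of $U_{(\mathcal{A},\mathcal{B})}$. The only cosmetic difference is that you invoke the dimension-counting step from \Cref{prop:global-rigidity-implies-infinitesimal-rigidity-for-psd-rank-2-positive-matrices} and then unwind the definition of the interior directly, whereas the paper cites the algebraic-independence argument of \Cref{prop:psd-factorization-with-size-k-factors} for the last step; both reach the same conclusion.
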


\begin{proof}
A matrix $M \in \mathcal{M}^{p \times q}_{3,2}$ is positive if and only if there are no $A^{(i)}$ and $B^{(j)}$ among the factors of any psd factorization so that 
$A^{(i)}$ and $B^{(j)}$ are orthogonal.  Hence we are in the setting of~\Cref{section:no_orthogonal_pairs}. Consider the matrix $C_{(\mathcal{A},\mathcal{B})}$ as in~\Cref{lem:matrix-C}. If $(A^{(1)}, \dots, A^{(p)}, B^{(1)}, \dots, B^{(q)})$ is a size-$2$ psd factorization of $M$ that is $1$-infinitesimally flexible, then by the proof of~\Cref{thm:no-orthogonal-pairs}, the cone $P_{(\mathcal{A},\mathcal{B})} $ is full-dimensional. This implies that $W_{(\mathcal{A},\mathcal{B})}$ is full-dimensional by~\Cref{lemma:interior-of-P-is-subset-of-W}.

If $W_{(\mathcal{A},\mathcal{B})}$ has full dimension, then the tangent space $T_{(I,I)} F$ intersects $U_{(\mathcal{A},\mathcal{B})}$ in its interior.  The argument is similar to the one we gave in~\Cref{prop:psd-factorization-with-size-k-factors}: if the intersection were contained in the boundary of $U_{(\mathcal{A},\mathcal{B})}$, then this 
affine space must be completely contained in the boundary. However, entries of elements of this tangent space are algebraically independent. This implies that $F$ intersects $U_{(\mathcal{A},\mathcal{B})}$ in its interior. By~\Cref{prop:psd-factorization-with-size-k-factors}, the matrix $M$ has a size-$2$ psd factorization where all factors have rank $2$.
\end{proof}

\Cref{cor:rank-k-factors-imply-not-globally-rigid} and \Cref{prop:not_inf_rigid_implies_rank_two_factors} together imply that the equivalent conditions in~\Cref{thm:equivalence-of-rigidities} are the same as every size-$2$ psd factorization of $M$ having at least one factor of rank one. Now we will finish with two results about positive matrices on the boundary of $\M_{\binom{k+1}{2}, k}^{p \times q}$ and $\M_{3,2}^{p \times q}$, respectively 

\begin{proposition}\label{prop:characterization-of-boundaries}
If a positive matrix $M$ is on the boundary of $\mathcal{M}^{p \times q}_{\binom{k+1}{2},k}$ then at least one factor 
in any size-$k$ psd factorization of $M$ has rank smaller than $k$.
\end{proposition}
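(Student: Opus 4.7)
The plan is to prove the contrapositive: if $M$ admits a size-$k$ psd factorization $(A^{(1)},\ldots,A^{(p)},B^{(1)},\ldots,B^{(q)})$ in which every factor has rank $k$ (equivalently, is positive definite), then $M$ lies in the relative interior of $\mathcal{M}^{p\times q}_{\binom{k+1}{2},k}$ and hence is not on its boundary. The argument closely parallels the strategy used to prove \Cref{prop:psd-factorization-with-size-k-factors}.

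First I would exploit the openness of the positive definite cone in $\mathcal{S}^k$. Since each $A^{(i)}$ and each $B^{(j)}$ is positive definite, there is an open neighborhood $N$ of $(\mathcal{A},\mathcal{B})$ in $\mathbb{R}^{p\binom{k+1}{2}}\times \mathbb{R}^{\binom{k+1}{2}q}$ such that for every $(\mathcal{A}',\mathcal{B}')\in N$ all associated $k\times k$ factors remain positive definite. Consequently every matrix in $\mu(N)$ inherits a size-$k$ psd factorization and therefore has psd rank at most $k$.

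Second I would show that $\mu(N)$ contains a relative neighborhood $W$ of $M$ inside the manifold of rank-$\binom{k+1}{2}$ matrices. Because $\mathcal{A}$ has full column rank $\binom{k+1}{2}$ and $\mathcal{B}$ has full row rank $\binom{k+1}{2}$, the differential $d\mu_{(\mathcal{A},\mathcal{B})}(X,Y)=X\mathcal{B}+\mathcal{A}Y$ surjects onto the tangent space of the rank-$\binom{k+1}{2}$ locus at $M$, so $\mu$ is locally a submersion onto this locus and therefore open. Since $M$ has strictly positive entries, $W$ can be shrunk so that all its matrices are nonnegative. Each element of $W$ then has rank exactly $\binom{k+1}{2}$ and psd rank at most $k$; the inequality $\operatorname{rank}(M')\le \binom{r+1}{2}$ for a matrix $M'$ of psd rank $r$ forces the psd rank to equal $k$. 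Thus $W\subseteq \mathcal{M}^{p\times q}_{\binom{k+1}{2},k}$, placing $M$ in the relative interior and completing the contrapositive.

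The hard part will be pinning down the precise ambient space implicit in the paper's notion of ``boundary'' and matching it to the submersion step; once the correct ambient is fixed (the rank-$\binom{k+1}{2}$ locus, or its intersection with the nonnegative orthant), the surjectivity of $d\mu$ reduces to a routine linear algebra check of the same flavor as the algebraic-independence argument already used in \Cref{prop:psd-factorization-with-size-k-factors}.
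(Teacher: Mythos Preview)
Your proposal is correct and follows the same contrapositive strategy as the paper: use openness of the positive definite cone to get a neighborhood of the factorization in $(\mathcal{S}_{++}^k)^p\times(\mathcal{S}_{++}^k)^q$, then use openness of the multiplication map $\mu$ to push this to a relative neighborhood of $M$ inside $\mathcal{M}^{p\times q}_{\binom{k+1}{2},k}$. The only difference is in how the openness of $\mu$ is justified: the paper simply cites \cite[Proposition 5.1]{krone2021uniqueness} to the effect that $\mu$ is a fiber bundle (hence open), whereas you check directly that $d\mu_{(\mathcal{A},\mathcal{B})}(X,Y)=X\mathcal{B}+\mathcal{A}Y$ surjects onto the tangent space of the rank-$\binom{k+1}{2}$ locus, which is a more self-contained argument. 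Your explicit verification that nearby matrices have psd rank exactly $k$ (via the inequality $\operatorname{rank}\le\binom{r+1}{2}$) fills in a detail the paper leaves implicit.
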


\begin{proof}
In order to prove the contrapositive, suppose that $M$ has a size-$k$ psd factorization $(A^{(1)},\ldots,A^{(p)},$
$B^{(1)},\ldots,B^{(q)})$ such that 
all factors are strictly positive definite.  
Then this factorization has an open neighborhood $W$ contained in $ (\mathcal{S}_{++}^k)^p \times (\mathcal{S}_{++}^k)^q$ where $\mathcal{S}_{++}^k$ denotes the set of strictly positive definite $k \times k$ matrices. By~\cite[Proposition 5.1]{krone2021uniqueness}, the map $\mu$ is a fiber bundle. Since $\mu$ is an open map as a fibre bundle, $\mu(W)$ is an open neighborhood of $M$ in $\mathcal{M}^{p \times q}_{\binom{k+1}{2},k}$. Hence $M$ is in the interior of $\mathcal{M}^{p \times q}_{\binom{k+1}{2},k}$.
\end{proof}

 We note that the above result is consistent with~\cite[Corollary 3.7]{kubjas2018positive} which states that a positive matrix is on the boundary 
 of $\M_{3,2}^{p \times q}$ if and only if all its size-$2$ psd factorizations have at least three factors  $A^{(i_1)}, A^{(i_2)}, A^{(i_3)}$ and at least three factors 
 $B^{(j_1)}, B^{(j_2)}, B^{(j_3)}$
 with rank one.

We conjecture that the converse of the proposition holds as well: if a positive matrix in $\mathcal{M}^{p \times q}_{\binom{k+1}{2},k}$ does not have a size-$k$ psd factorization where all factors have rank $k$, then it lies on the boundary of $\mathcal{M}^{p \times q}_{\binom{k+1}{2},k}$. Proving the converse would allow us to show that 
a positive matrix $M$ is on the boundary of $\M_{3,2}^{p \times q}$ if and only if  $M$ has a size-$2$ psd factorization satisfying any of the conditions in~\Cref{thm:equivalence-of-rigidities}.

\begin{conjecture}
A positive matrix $M$ is on the topological boundary of $\M_{3,2}^{p\times q}$ if and only if any one of the conditions 
in~\cref{thm:equivalence-of-rigidities} is true. 
\end{conjecture}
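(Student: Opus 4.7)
The plan is to establish both directions of the biconditional by combining the cited characterization~\cite[Corollary 3.7]{kubjas2018positive} --- that a positive matrix $M$ lies on the boundary of $\M_{3,2}^{p\times q}$ if and only if every size-$2$ psd factorization of $M$ contains at least three rank-one $A$-factors and at least three rank-one $B$-factors --- with the rigidity machinery developed in this paper.

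For the forward direction, assume $M$ lies on the topological boundary. The cited characterization forces every size-$2$ psd factorization of $M$ to contain at least three rank-one $A$-factors and at least three rank-one $B$-factors; in particular, no size-$2$ psd factorization of $M$ consists entirely of rank-$2$ factors. The contrapositive of~\Cref{prop:not_inf_rigid_implies_rank_two_factors} then shows that every size-$2$ psd factorization of $M$ is $1$-infinitesimally rigid, and \Cref{thm:equivalence-of-rigidities} yields all four equivalent rigidity properties.

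For the backward direction, suppose $M$ admits a size-$2$ psd factorization $(A^{(1)},\ldots,A^{(p)},B^{(1)},\ldots,B^{(q)})$ satisfying any (hence, by~\Cref{thm:equivalence-of-rigidities}, all) of the rigidity conditions. Since the factorization is globally rigid, every other size-$2$ psd factorization of $M$ lies in its $\operatorname{GL}(2)$-orbit. Because the action $A^{(i)}\mapsto S^{T}A^{(i)}S$ and $B^{(j)}\mapsto S^{-1}B^{(j)}S^{-T}$ preserves the rank of each individual factor, every size-$2$ psd factorization of $M$ has the identical rank sequence. Since our factorization is $1$-infinitesimally rigid, \Cref{thm:no-orthogonal-pairs} ensures that it contains three rank-one $A$-factors $A^{(i_1)},A^{(i_2)},A^{(i_3)}$ and three rank-one $B$-factors $B^{(j_1)},B^{(j_2)},B^{(j_3)}$ for which the inequalities~\eqref{eqn:Farkas_lemma_conditions_pxq} hold. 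In particular, it has at least three rank-one $A$-factors and at least three rank-one $B$-factors, and the rank-preservation observation above transfers this property to every size-$2$ psd factorization of $M$. A second appeal to~\cite[Corollary 3.7]{kubjas2018positive} places $M$ on the boundary, completing the proof.

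The main obstacle I anticipate is the non-degeneracy hypothesis in~\Cref{thm:no-orthogonal-pairs}, which assumes pairwise linear independence of the rank-one vectors $a_i$ and $b_j$. To invoke that theorem cleanly, one must argue either that such a non-degenerate triple of $A$-factors and of $B$-factors can always be extracted from a rigid factorization --- using $\operatorname{rank}(M)=3$ to preclude too many rank-one factors being parallel without violating the row/column rank --- or that one can pass to a maximal sub-factorization with pairwise linearly independent rank-one factors while preserving both the rigidity hypothesis (via \Cref{lem:sub-facorization-rigid}) and the count of rank-one factors needed to apply the cited boundary characterization. Handling this degenerate scenario, together with any edge cases where parallel rank-one factors coexist with the rigid triple, is where I expect the bulk of the technical work to concentrate.
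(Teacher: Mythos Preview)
The statement is labelled a \emph{Conjecture} in the paper and is not proved there. The paper establishes only the forward implication (boundary $\Rightarrow$ rigidity) as the Corollary immediately following the conjecture, and your forward argument is essentially identical to that corollary. The backward implication is left open.

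Your route for the backward direction differs from what the paper hints at. The authors propose proving the converse of \Cref{prop:characterization-of-boundaries} for general $k$; you instead combine global rigidity with the $k=2$-specific characterization \cite[Corollary~3.7]{kubjas2018positive}, which they cite but do not exploit in this way. Your outline is sound: global rigidity forces a single $\operatorname{GL}(2)$-orbit, that action preserves individual factor ranks, so it suffices to show the rigid factorization itself has at least three rank-one $A$-factors and three rank-one $B$-factors. The degeneracy obstacle you flag is real, but your proposed fix via \Cref{lem:sub-facorization-rigid} goes the wrong direction --- that lemma says a rigid sub-factorization forces the whole to be rigid, whereas you need to pass from a rigid whole to a rigid sub-factorization. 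The correct observation is that parallel rank-one factors contribute positively proportional rows to $C_{(\mathcal{A},\mathcal{B})}$ (the entries of $\alpha_i^A$ are homogeneous of degree four in $a_i$), so discarding duplicates leaves the cone $P_{(\mathcal{A},\mathcal{B})}$, and hence $1$-infinitesimal rigidity, unchanged; the corresponding rows of $M$ are likewise proportional, so the resulting submatrix still has rank $3$ and lies in $\mathcal{M}_{3,2}$. This sub-factorization meets the non-degeneracy hypothesis of \Cref{thm:no-orthogonal-pairs}, whose contrapositive then forces at least three pairwise-independent rank-one $A$-factors and $B$-factors. With that correction your argument appears to settle the $k=2$ conjecture.
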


Instead, we finish with a weaker result. 

\begin{corollary}
If a positive $M$ is on the boundary of $\mathcal{M}^{p \times q}_{3,2}$
then all size-$2$ psd factorizations 
of $M$ are  $1$-infinitesimally (equivalently, $2$-infinitesimally, locally or globally) rigid.
\end{corollary}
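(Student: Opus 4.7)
The plan is to combine \Cref{prop:characterization-of-boundaries} and \Cref{prop:not_inf_rigid_implies_rank_two_factors} via a short contrapositive argument, and then invoke \Cref{thm:equivalence-of-rigidities} to promote $1$-infinitesimal rigidity to the other three notions.

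First I would apply \Cref{prop:characterization-of-boundaries} with $k=2$ to the positive matrix $M$ on the boundary of $\M_{3,2}^{p \times q}$. This gives that every size-$2$ psd factorization of $M$ has at least one factor of rank strictly less than $2$; equivalently, no size-$2$ psd factorization of $M$ has all factors of rank $2$.

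Next I would argue by contradiction. Suppose some size-$2$ psd factorization of $M$ is $1$-infinitesimally flexible. Since $M$ is positive, \Cref{prop:not_inf_rigid_implies_rank_two_factors} applies and yields a size-$2$ psd factorization of $M$ in which every factor has rank $2$. This directly contradicts the conclusion of the first step. Hence every size-$2$ psd factorization of $M$ must be $1$-infinitesimally rigid.

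The equivalence with $2$-infinitesimal, local, and global rigidity follows immediately from \Cref{thm:equivalence-of-rigidities}, since the hypothesis on $M$ being positive matches the hypothesis of that theorem. There is essentially no obstacle here: the corollary is a clean consequence of two results that have already been proved, and the only work is to state the chain of implications clearly.
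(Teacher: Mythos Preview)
Your proof is correct and follows essentially the same approach as the paper: both combine \Cref{prop:characterization-of-boundaries} and \Cref{prop:not_inf_rigid_implies_rank_two_factors} via a short contrapositive chain, differing only in the order in which the two propositions are invoked. Your explicit appeal to \Cref{thm:equivalence-of-rigidities} for the parenthetical equivalences is a nice touch that the paper leaves implicit.
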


\begin{proof}
If a size-$2$ factorization of $M$ is $1$-infinitesimally flexible, 
by~\Cref{prop:not_inf_rigid_implies_rank_two_factors}, it has a size-$2$ psd factorization where all factors have
rank two. Now~\Cref{prop:characterization-of-boundaries} implies that  $M$ is in the interior of $\mathcal{M}^{p \times q}_{3,2}$.
\end{proof}

\begin{remark}
In~\cite{krone2021uniqueness} the authors study the rigidity of nonnegative matrix factorizations in relation to the boundary of $\tilde{\mathcal{M}}^{p \times q}_{r}$, the set of matrices whose rank and nonnegative rank both equal to $r$. A matrix $M \in \tilde{\mathcal{M}}^{p \times q}_{3}$ is on the boundary of $\tilde{\mathcal{M}}^{p \times q}_{3}$ if and only if $M$ contains a zero entry or all its size-3 nonnegative factorizations are inifinitesimally rigid. We believe that an analogous statement is true for $\M_{3,2}^{p\times q}$, namely, $M$ is on the boundary of 
$\M_{3,2}^{p\times q}$ if and only if $M$ has a zero entry or all its size-$2$ psd factorizations are $1$- or $2$-infinitesimally rigid. The statement for the nonnegative rank does not generalize. There are matrices on the boundary of $M \in \tilde{\mathcal{M}}^{p \times q}_{4}$ that have no infinitesimally rigid factorizations. We expect similar phenomena to happen for matrices with higher psd rank.   
\end{remark}

\section*{Acknowledgements}
K.\,K.\ and L.\,M.\ were partially supported by the Academy of Finland grant number 323416.

\bibliographystyle{alpha}
\bibliography{sample}

\bigskip 

\noindent
\footnotesize \textbf{Authors' addresses:}

\medskip 
\noindent{Department of Mathematics, San Francisco State University } \hfill \texttt{kdawson1@sfsu.edu}
\\
\noindent{Department of Mathematics, San Francisco State University } \hfill \texttt{serkan@sfsu.edu}
\\
\noindent{Department of Mathematics and Systems Analysis, Aalto University} \hfill \texttt{kaie.kubjas@aalto.fi}
\\
\noindent{Department of Mathematics and Systems Analysis, Aalto University} \hfill \texttt{lilja.metsalampi@aalto.fi}

\end{document}